\numberwithin{equation}{subsection}
\newtheorem{propo}{Proposition}[section]
\newtheorem{theor}[propo]{Theorem}
\newtheorem{lemma}[propo]{Lemma}
\theoremstyle{definition}
\theoremstyle{remark}
\newcommand{\egm}{\normalcolor{}}
\let\oldmarginpar\marginpar
\renewcommand\marginpar[1]{\oldmarginpar{\footnotesize #1}}
\newcommand{\ZZ}{\mathbb{Z}}
\newcommand{\Int}{\operatorname{Int}}
\newcommand{\card}{\operatorname{card}}
\begin{document}

\title[Loops in surfaces and star-fillings]{Loops in surfaces and star-fillings}

    \author[Vladimir Turaev]{Vladimir Turaev}

\begin{abstract} We discuss a new approach to computing the standard algebraic operations on homotopy classes of  loops in a surface:  the homological intersection number,  Goldman's Lie  bracket, and the author's Lie cobracket.  Our approach uses   fillings  of the surface by certain graphs.  
\end{abstract}

\maketitle

\section {Introduction}\label{Introduction}

The homological
 intersection number    of   loops    in an oriented surface~$\Gamma$   is computed  by deforming these loops  into a    transversal position and counting   their intersections  with signs determined by the orientation of~$\Gamma$. 
 This defines a skew-symmetric bilinear form   $\cdot_\Gamma: H_1(\Gamma) \times H_1(\Gamma) \to \ZZ$. 
  A   subtler way to     count    intersections  of loops  leads to   Goldman's Lie bracket 
   in the module    generated by   free homotopy classes of loops in~$\Gamma$, see  \cite{Go1}, \cite{Go2}. This bracket was  complemented  in \cite{Tu1}  by a  Lie cobracket  defined  in  terms of self-intersections of loops; for recent work on these operations  see \cite{AKKN1},
  \cite{AKKN2},   \cite{Hain}, \cite{Hain2}, \cite{Ka}, \cite{KK}, \cite{KK2}, \cite{LS}, \cite{Ma}. 
  For   compact~$\Gamma$,  the duality isomorphism   $H_1(\Gamma) \approx H^1(\Gamma, \partial \Gamma)$ carries   the  form~$\cdot_\Gamma$ into the composition of the cup-product    $ \cup : H^1(\Gamma, \partial \Gamma) \times  H^1(\Gamma, \partial \Gamma)\to  H^2(\Gamma, \partial \Gamma)$   with the linear map $ H^2(\Gamma, \partial \Gamma)\to \ZZ$   evaluating 2-cohomology classes  on the fundamental class of~$\Gamma$. So,  the  
 intersection number    of  two  elements of $H_1(\Gamma) $ can  be computed by taking the  dual cohomology classes, representing them by 1-cocycles on a triangulation of~$\Gamma$, evaluating  the  cup-product  of these 1-cocycles  on all  triangles of the triangulation, and  summing up the resulting values.  This method  allows us  to compute the algebraic number of intersections of  loops without ever considering the intersections themselves. The   aim of this  paper is to give   similar computations of the Lie bracket and cobracket mentioned above. To do it, we switch from  triangulations   to a more flexible language of     graphs  in surfaces.

In the rest of the introduction we focus on the case of surfaces with non-void boundary. Let 
~$\Gamma$ be  a compact connected oriented surface with $\partial \Gamma \neq \emptyset$. 
By a \emph{star} we   mean  an oriented  graph formed by   $n\geq 2$  vertices of degree~$1$ (the leaves),  a  vertex of degree~$n$ (the center), and~$n$ edges leading from the center to  the leaves.  The set of leaves of a star~$s$ is denoted by $\partial s$.   A \emph{star~$s$  in~$\Gamma$} is  a star embedded in~$\Gamma$ so that  $s\cap \partial \Gamma =\partial s$. The orientation  of~$\Gamma$  at the  center  of~$s$ determines a cyclic order  in the set ${\text {Edg}}(s)$ of   edges of~$s$. For   $e \in {\text {Edg}}(s)$   we let $e^+ \in {\text {Edg}}(s)$ be  the next   edge   with respect to this order.  We say that a loop   in~$\Gamma$ is \emph{$s$-generic} if it   misses   the vertices of~$s$,   meets all  edges of~$s$  transversely, and never  traverses a point of~$s$ more than once. It is clear that any loop in~$\Gamma$ can  be made $s$-generic by a small deformation.      Given an $s$-generic loop~$a$ in~$\Gamma$ and an edge   $e \in {\text {Edg}}(s)$ we let $a\cap e  $ be the set of points of~$e$ traversed by~$a$. For $p\in a\cap e$,   the intersection sign of~$a$ and~$e$ at~$p$ is denoted by $\mu_p (a) $.
The integer $a\cdot e =\sum_{p\in a \cap e}\mu_p(a)$ is the algebraic number of   intersections of~$a$ and~$e$. For   $s$-generic loops  $a,b$ in~$\Gamma$,  set
$$  a \cdot_s b = \sum_{e \in {\text {Edg}}(s)} \big (( a\cdot e) (b\cdot e^+) - (b\cdot e) ( a\cdot e^+) \big ). 
$$

\begin{theor}\label{s-inter}   The map $(a,b) \mapsto a\cdot_s b$ defines a skew-symmetric bilinear form $\cdot_s: H_1(\Gamma) \times H_1(\Gamma) \to  \ZZ$ depending only on the isotopy class of the star~$s$ in~$\Gamma$.
\end{theor}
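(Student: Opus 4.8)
The plan is to realize $a\cdot_s b$ as the value of one fixed skew-symmetric bilinear form on $\ZZ^n$ applied to two integer vectors attached to $a$ and $b$, and then to show that the homological ambiguity in those vectors lies exactly in the kernel of the form. Write $e_1,\dots,e_n$ for the edges of $s$ in the cyclic order induced by the orientation of $\Gamma$ at the center, with leaves $\ell_1,\dots,\ell_n\in\partial\Gamma$, so that $e_i^+=e_{i+1}$ (indices mod $n$), and set $x_i=a\cdot e_i$, $y_i=b\cdot e_i$. Then $a\cdot_s b=\omega(x,y)$, where $\omega(u,v)=\sum_i\big(u_iv_{i+1}-v_iu_{i+1}\big)$ is a skew-symmetric bilinear form on $\ZZ^n$; skew-symmetry of $\cdot_s$ is then immediate. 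The key algebraic observation I would record first is that the all-ones vector $\mathbf 1=(1,\dots,1)$ lies in the radical of $\omega$: indeed $\omega(\mathbf 1,v)=\sum_i(v_{i+1}-v_i)=0$ telescopes around the cycle, so $\omega$ descends to a skew-symmetric form $\bar\omega$ on the quotient $\ZZ^n/\ZZ\mathbf 1$.

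Next I would show that the class of $(a\cdot e_1,\dots,a\cdot e_n)$ in $\ZZ^n/\ZZ\mathbf 1$ depends only on $[a]\in H_1(\Gamma)$. The subtlety is that each $e_i$ runs from the interior vertex (the center) to a boundary leaf and so carries no homology class of its own; however, the difference $e_j-e_i$ is a relative $1$-cycle, its boundary $\ell_j-\ell_i$ lying in $\partial\Gamma$ once the two copies of the center cancel. Thus $e_j-e_i$ determines a class in $H_1(\Gamma,\partial\Gamma)$, and I would invoke the homologically invariant intersection pairing $H_1(\Gamma)\times H_1(\Gamma,\partial\Gamma)\to\ZZ$ to conclude that $a\cdot e_j-a\cdot e_i=[a]\cdot[e_j-e_i]$ depends only on $[a]$ (taking $a$ disjoint from $\partial\Gamma$, which is harmless since any loop can be pushed into the interior and made $s$-generic). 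Since all pairwise differences of the coordinates are fixed by $[a]$, the vector $(a\cdot e_i)_i$ is determined by $[a]$ up to adding a multiple of $\mathbf 1$, so its image in $\ZZ^n/\ZZ\mathbf 1$ is a well-defined function $\phi([a])$; additivity of intersection numbers makes $\phi\colon H_1(\Gamma)\to\ZZ^n/\ZZ\mathbf 1$ a homomorphism. Combining this with the previous paragraph gives $a\cdot_s b=\bar\omega(\phi([a]),\phi([b]))$, manifestly a skew-symmetric bilinear form on $H_1(\Gamma)$.

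For the dependence on $s$ only through its isotopy class, I would use that an isotopy of stars extends to an ambient isotopy $h$ of $\Gamma$; being connected to the identity, $h$ is orientation-preserving, hence matches the cyclic orders at the two centers and preserves all intersection signs. Therefore $a\cdot e_i'=h^{-1}(a)\cdot e_i$ while $[h^{-1}(a)]=[a]$, so $\cdot_{s'}$ and $\cdot_s$ agree as forms on $H_1(\Gamma)$.

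I expect the main obstacle to be the middle step: identifying precisely how the individual numbers $a\cdot e_i$ fail to be homological invariants, and carrying the orientation conventions so that $a\cdot e_j-a\cdot e_i$ matches the relative intersection pairing with the correct sign. Once the vector $\mathbf 1$ is recognized simultaneously as the full ambiguity of $(a\cdot e_i)_i$ and as an element of the radical of $\omega$, these two phenomena cancel and the remaining verifications are routine.
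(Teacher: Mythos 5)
Your argument is correct, but it takes a genuinely different route from the paper. The paper never proves Theorem~\ref{s-inter} directly: it develops the formalism of quasi-surfaces, regards a regular neighborhood $V$ of~$s$ as the surface core of a quasi-surface whose gates are the arcs of $\partial V$ parallel to the pairs $e\cup e^+$, defines a gate-orientation-dependent pairing $\bullet_{X,\omega}$ by pushing $a$ to the left and $b$ to the right of each gate and counting genuine intersections inside~$V$, proves invariance via a list of local loop moves, skew-symmetrizes to remove the dependence on the gate orientation, and only then computes the resulting form to recover the stated formula (Lemma~\ref{rigovbvbq1a}). Your proof instead stays entirely at the level of the formula: you identify $a\cdot_s b$ as a fixed cyclic skew form $\omega$ on $\ZZ^n$ evaluated on the edge-intersection vectors, observe that $\mathbf 1$ lies in the radical of $\omega$, and show that the edge-intersection vector of a loop is a homological invariant precisely modulo $\ZZ\mathbf 1$, because the differences $e_j-e_i$ are proper arcs representing classes in $H_1(\Gamma,\partial\Gamma)$ and the absolute-relative intersection pairing is homologically invariant. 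The exact match between the ambiguity of the vector and the radical of the form is a clean conceptual explanation that the paper's computation obscures, and your argument is considerably more elementary and self-contained. What the paper's heavier machinery buys is uniformity: the same quasi-surface setup yields the homotopy-level bracket and cobracket (Theorems~\ref{s-inter++} and~\ref{s-inter++cobr}) and the star-filling decomposition, whereas your reduction to $\ZZ^n/\ZZ\mathbf 1$ uses cancellation of integer intersection numbers and does not transport directly to the module $M$, where the grafted classes $\langle a_pb_q\rangle$ attached to different pairs $(p,q)$ are distinct basis elements and cannot be subtracted. Two small points you should spell out if you write this up: additivity of $\phi$ requires extending the definition from loops to generic $1$-cycles (finite families of loops) so that sums of homology classes are represented; and the well-definedness of the geometric count $a\cdot(e_j-e_i)$ on homology classes is exactly the standard pairing $H_1(\Gamma)\times H_1(\Gamma,\partial\Gamma)\to\ZZ$, which needs $a$ kept off $\partial\Gamma$, as you note.
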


The idea behind this  theorem  is to view each pair of points $p\in a\cap e, q\in b \cap e^+$  as a pseudo-intersection of~$a$ and~$b$, and to consider  the (skew-symmetrized)  algebraic number of such pairs. To recover the standard  homological intersecion form $\cdot_\Gamma$ in $H_1(\Gamma)$, we need one more  notion.  A \emph{star-filling} of~$\Gamma$  is a finite family~$F$ of  disjoint stars   in~$\Gamma$ such that each component of   the set $\Gamma \setminus \cup_{s\in F} s$    is a disk meeting  $\partial \Gamma$ at one or  two  open  segments. We explain in the body of the paper that~$\Gamma$  has    star-fillings. The following theorem computes the intersection form  $\cdot_\Gamma$ in terms of star-fillings.

\begin{theor}\label{s-inter2}   For any star-filling~$F$ of~$\Gamma$, we have
\begin{equation}\label{s-inter3}
 2\, \cdot_\Gamma=    \sum_{s\in F} \cdot_s: H_1(\Gamma) \times H_1(\Gamma) \to \ZZ. \end{equation}
\end{theor}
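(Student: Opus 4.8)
The plan is to prove the identity by reducing it to a local, additive statement over the stars of the filling, and then checking equality on each star by a direct geometric count. Since Theorem~\ref{s-inter} already tells us that each $\cdot_s$ is a well-defined skew-symmetric bilinear form on $H_1(\Gamma)$ depending only on the isotopy class of~$s$, both sides of \eqref{s-inter3} are skew-symmetric bilinear forms, so it suffices to verify the equality on a generating set of $H_1(\Gamma)$, or more conveniently to verify it for arbitrary loops $a,b$ put simultaneously into general position with respect to the whole filling~$F$. First I would fix such a pair $a,b$, transverse to each other and $s$-generic for every $s\in F$, and reduce the transverse-intersection count defining $2\,(a\cdot_\Gamma b)$ to a sum of contributions localized near the edges of the stars.

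The key idea is that the complementary regions $\Gamma\setminus\bigcup_{s\in F}s$ are disks meeting $\partial\Gamma$ in one or two segments, so the homological intersection of $a$ and $b$ can be computed by pushing all their transverse crossings off these disks and onto the edges of the stars; equivalently, one encodes how $a$ and $b$ enter and leave each complementary disk across the surrounding edges. Concretely, I would set up a combinatorial model in which each arc of $a$ (and of $b$) inside a complementary disk is recorded by its two endpoints on the edges bounding that disk, together with the intersection signs $\mu_p(a)$. The contribution of a given disk to the algebraic intersection number $a\cdot_\Gamma b$ is then a sum over pairs of an arc of $a$ and an arc of $b$ crossing inside it, and this can be re-expressed, using the cyclic successor operation $e\mapsto e^+$ around each star center, in terms of the crossing numbers $a\cdot e$ and $b\cdot e^+$ appearing in the definition of $\cdot_s$.

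The heart of the argument is therefore a bookkeeping lemma: summing the quantity $(a\cdot e)(b\cdot e^+)-(b\cdot e)(a\cdot e^+)$ over all edges $e$ of all stars reassembles, with the correct signs, exactly twice each honest transverse crossing of $a$ and $b$, and contributes zero from every ``false'' pair whose two arcs do not actually cross. I expect the main obstacle to be precisely this sign bookkeeping: one must show that each true intersection point of $a$ and $b$, lying in some complementary disk, is counted by exactly two adjacent edge-pairs $(e,e^+)$ of the surrounding star(s) with matching signs (hence the factor $2$), while the cancellation of spurious terms follows from the skew-symmetrization in the definition of $\cdot_s$ together with the fact that each complementary region is a disk (so that two arcs entering and leaving it cross an even, sign-controlled number of times). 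Once the per-disk contributions are matched to the per-edge summands, summing over all disks and all stars of~$F$ yields $\sum_{s\in F}\,a\cdot_s b=2\,(a\cdot_\Gamma b)$, as claimed.

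To make the cancellation transparent I would orient the boundary edges of each complementary disk consistently with the orientation of~$\Gamma$ and track the entry/exit pattern of $a$ and $b$ along this boundary circle; the Jordan-curve behaviour inside a disk guarantees that the linking contribution of a pair of chords equals the sign-weighted difference of their endpoint interleavings, which is exactly the antisymmetric combinator $e\mapsto e^+$ produces. The independence of the final answer from the chosen general position, and from the particular star-filling, is then automatic from the already-established isotopy invariance of each $\cdot_s$ in Theorem~\ref{s-inter} and the homological invariance of $\cdot_\Gamma$.
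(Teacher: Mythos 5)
Your overall strategy --- fix loops $a,b$ in general position with respect to the filling and verify the identity by a direct localized count --- is reasonable in spirit, but your key step, the ``bookkeeping lemma,'' is false in the per-disk form in which you state it, and you supply no mechanism that would make it work globally. The summand $(a\cdot e)(b\cdot e^+)-(b\cdot e)(a\cdot e^+)$ attached to an edge $e$ is naturally associated with the corner of the complementary region sitting between $e$ and $e^+$ at the star's center, and you claim that the corners of a given complementary disk account for exactly twice the honest crossings of $a$ and $b$ inside that disk. This already fails for the once-punctured torus $\Gamma=T\setminus \Int(D_A)$ with the star-filling consisting of the single $4$-edge star from the paper's worked torus example (center $p(O)$, edges $e_1,\dots,e_4$, with $a$ the horizontal loop at height $1/3$ and $b$ the vertical loop at $x=1/4$). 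Here $\Gamma\setminus s$ has two components: the disk $D_{LR}$ formed by the left and right triangles of the square and the disk $D_{BT}$ formed by the bottom and top triangles. The unique honest crossing $(1/4,1/3)$ lies in $D_{LR}$, so your per-disk matching would require the corners of $D_{LR}$ to contribute $2$ and those of $D_{BT}$ to contribute $0$. But the four edge-summands are, in order, $1$ (corner $(e_1,e_2)$, in $D_{BT}$), $0$ (corner $(e_2,e_3)$, in $D_{LR}$), $0$ (corner $(e_3,e_4)$, in $D_{BT}$), $1$ (corner $(e_4,e_1)$, in $D_{LR}$): each disk contributes $1$. The global total is the correct $2=2\,x\cdot_\Gamma y$, but it is not assembled disk by disk. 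The underlying reason is that the algebraic crossing number of two arcs in a disk depends on the cyclic order of their four endpoints on the whole boundary circle, which cannot be recovered from the coarse data $a\cdot e$, $b\cdot e$ on adjacent edge pairs; so the re-expression at the heart of your argument is not available.

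The global redistribution of contributions between complementary disks is exactly what the paper's quasi-surface machinery is built to handle. The paper never compares the two sides of \eqref{s-inter3} crossing-by-crossing for a single position of $a,b$; it shows that $2\,\cdot_\Gamma$ and $\sum_{s\in F}\cdot_s$ are both values of one invariantly defined pairing (the form $\bullet$ of a quasi-surface, independent of the gate orientation and of the positioning of the loops), computed for two quasi-surface structures on the same space related by cutting along the boundaries of regular neighborhoods of the stars. Equality then follows from invariance under the cut and disjoint-union transformations, plus the vanishing of the form on each complementary piece --- and that vanishing is precisely where the hypothesis that each complementary disk meets $\partial\Gamma$ in one or two segments is used, since it allows $a$ and $b$ to be homotoped apart inside such a disk. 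Your sketch never uses this hypothesis in an essential way, which is another warning sign: the statement is false without it. To repair your approach you would need either to prove a substitute invariance statement (showing all honest crossings can be pushed into the star neighborhoods at the cost of controlled, computable corrections) or to find a genuinely global combinatorial identity; the per-disk matching you propose does not exist.
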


So, for any $x,y \in H_1(\Gamma)$, the integer  $\sum_{s\in F} x \cdot_s y$ is   even  and is equal to  $2\, x \cdot_\Gamma y$. As a consequence, the form $\cdot_\Gamma$ can be fully recovered from the forms $\{\cdot_s\}_{s\in F}$. Similar remarks apply to our computations of the bracket and cobracket below.

To state an analogue of Theorems~\ref{s-inter}  and~\ref{s-inter2}    for Goldman's Lie bracket,  we first recall the  definition of this bracket.  Let  ${\mathcal L}={\mathcal L}(\Gamma)$   be the set of free   homotopy classes of loops in~$\Gamma$  
 and let   $ M=M(\Gamma)$ be  the free abelian group with basis ${\mathcal L}$. For a loop~$a$ in~$\Gamma$, we let $\langle a \rangle\in \mathcal L \subset M$ be the free homotopy class of~$a$. For a  point $p \in \Gamma$ traversed by~$a$  once, we let $a_p$ be the loop starting  at~$p$ and going along~$a$ until the return to~$p$. We say that a pair of loops $a,b$ in~$\Gamma$ is \emph{generic} if these loops are transversal and do not meet at     self-intersections of~$a$ or~$b$. Then  the (finite) set of  intersection points of $a,b$ is denoted by   $a\cap b$. For  $p\in a \cap b$,  we let $\varepsilon_p (a,b)= \pm 1$ be the  intersection sign of~$a$ and~$b$ at~$p$. Also,  $a_p b_p$ stands for the  product of the loops $a_p, b_p$ based at~$p$. 
 Goldman's bracket  is the bilinear form  $[\, ,\, ]_\Gamma :M\times M\to M$ defined  on the basis $\mathcal L\subset M$ by 
 $$[ \langle a \rangle, \langle b \rangle ]_\Gamma=\sum_{p\in a\cap b} \varepsilon_p(a,b) \langle a_p b_p \rangle$$
 for any generic pair of  loops $a,b$ in~$\Gamma$. 
 The bracket $[\, ,\, ]_\Gamma$  is a well-defined  homotopy lift of the   form  $\cdot_\Gamma$ in $ H_1(\Gamma)$.

With any  star~$s$ in~$\Gamma$ we now associate a bilinear map $[\, ,\,]_s:M\times M\to M$.  For  any $s$-generic loops $a,b$ in~$\Gamma$ and any  points $p,q \in  s $ traversed respectively by $a,b$, we  pick a  path~$c$ from~$p$ to~$q$ in~$s$ and write $a_pb_q$ for the loop $a_p c b_qc^{-1}$. Clearly, the   free homotopy class of this loop  does not depend on the choice of~$c$.  Set 
$$  [ a, b]_s   = \sum_{e \in {\text {Edg}}(s)} \big ( \sum_{p\in a\cap e, q\in b\cap e^+} \mu_p(a)\,  \mu_q(b)  \langle  a_p b_q \rangle  -  \sum_{ p\in a\cap e^+, q\in b\cap e}
   \mu_p(a) \, \mu_q(b)  \langle      a_p b_q \rangle\big ) . 
$$ 
   
   \begin{theor}\label{s-inter++}   The map $(\langle  a \rangle  , \langle  b\rangle  ) \mapsto [ a, b]_s $ defines a skew-symmetric bilinear form $[\, ,\,]_s:M\times M\to M$ depending only on the isotopy class of~$s$ in~$\Gamma$.
  For any star-filling~$F$ of~$\Gamma$, we have
\begin{equation}\label{s-inter3++}
2\,   [\, ,\,]_\Gamma=  \sum_{s\in F} [\, ,\,] _s.  \end{equation}
\end{theor}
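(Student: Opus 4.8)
The plan is to establish the three assertions in turn — that $[\,,\,]_s$ is a well-defined skew-symmetric bilinear form, that it depends only on the isotopy class of $s$, and that $\sum_{s\in F}[\,,\,]_s=2\,[\,,\,]_\Gamma$ — by refining the arguments behind Theorems~\ref{s-inter} and~\ref{s-inter2} so as to carry the free homotopy class $\langle a_pb_q\rangle\in M$ attached to a pair of crossings, rather than merely the sign $\mu_p(a)\mu_q(b)\in\ZZ$. Applying the augmentation $\aug\colon M\to\ZZ$, $\langle\,\cdot\,\rangle\mapsto 1$, turns $[\,,\,]_s$ into $\cdot_s$ and \eqref{s-inter3++} into \eqref{s-inter3}, so every step must reduce under $\aug$ to the corresponding step for the homological forms; this both guides the bookkeeping and supplies a constant consistency check. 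Three points are immediate. Because $s$ is a tree, any two paths in $s$ from $p$ to $q$ are homotopic rel endpoints, so $\langle a_pb_q\rangle$ is independent of $c$. Skew-symmetry is formal: using $\langle b_pa_q\rangle=\langle a_qb_p\rangle$ and interchanging the labels $p\leftrightarrow q$ sends the positive inner sum of $[b,a]_s$ onto the subtracted sum of $[a,b]_s$ and conversely, whence $[b,a]_s=-[a,b]_s$. Bilinearity is then automatic, the form being defined through its values on the basis $\mathcal L$.

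The substance of the first assertion is homotopy invariance of $[a,b]_s$ under a free homotopy of $a$ (the case of $b$ is symmetric). Placing a generic homotopy in general position with respect to $s$ decomposes it into (i) isotopies disjoint from the singular events, under which nothing changes; (ii) a finger tangency of $a$ with the interior of an edge $e$; (iii) a passage of a strand of $a$ through the center of $s$; and (iv) a self-crossing of $a$ sweeping across $s$ (leaves lie on $\partial\Gamma$ and are avoided by interior loops). For (ii) two crossings $p',p''\in a\cap e$ of opposite signs appear or disappear, and the subarc of $a$ between them together with a subarc of $e$ bounds the finger disk; sliding the basepoint across this disk gives $\langle a_{p'}b_q\rangle=\langle a_{p''}b_q\rangle$ for every $q$, so with $\mu_{p'}(a)=-\mu_{p''}(a)$ the two contributions cancel wherever $a\cap e$ enters the formula — mirroring the cancellation of signs in Theorem~\ref{s-inter}. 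Move (iv) changes neither $a\cap s$ nor the signs $\mu$, and since it is a homotopy of $a$ that may be performed fixing the relevant basepoints, it leaves every class $\langle a_pb_q\rangle$ unchanged; thus $[a,b]_s$ is unaffected.

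The decisive move is (iii). In the local coordinate model of Theorem~\ref{s-inter}, the strand sweeps across the center $v$: before the event it meets exactly one consecutive run of the edges at $v$, afterward the complementary run, so a whole packet of summands indexed by the adjacent pairs $(e,e^+)$ around $v$ is created or destroyed, and on the homological level these telescope to zero. The lift to $M$ rests on two observations. First, all the crossings $p$ involved lie on one and the same strand of $a$, and moving the basepoint along that strand is rebasing $a$ along a subarc of itself; hence $\langle a_pb_q\rangle$ depends, for fixed $q$, only on $q$ and not on which edge carries $p$. Second, a direct sign check shows that $\sigma(g)\,\mu_{p_g}(a)$ is independent of the edge $g$ at $v$, where $\sigma(g)=\pm1$ records whether the crossing on $g$ appears or disappears during the sweep; consequently, for each fixed $q\in b\cap f$ the coefficient of the common class $\langle a_pb_q\rangle$ in the total change equals $\mu_q(b)\bigl(\sigma(f^-)\mu_{p_{f^-}}(a)-\sigma(f^+)\mu_{p_{f^+}}(a)\bigr)=0$. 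Thus the $\ZZ$-cancellation of Theorem~\ref{s-inter} lifts verbatim to $M$, and $[\,,\,]_s$ is well defined. Isotopy invariance in $s$ then follows: a generic isotopy of $s$ either drags $s$ across fixed $a,b$ — which, by letting the loop move instead, is one of the moves (ii)–(iv) already settled — or deforms $s$ within a regular neighborhood disjoint from $a,b$, leaving the data literally unchanged.

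For the filling identity the plan is to exploit the freedom from the first part: choose $a,b$ in general position and simultaneously $s$-generic for all $s\in F$, push all their crossings with the star edges close to the centers, and cut $\Gamma$ along $\bigcup_{s\in F}s$ into the complementary disks, each meeting $\partial\Gamma$ in one or two arcs. Each adjacent pair $(e,e^+)$ at a center is a corner of one such disk, and a pseudo-intersection pair $(p,q)$ is then a crossing of $a$ on one side of that corner paired with a crossing of $b$ on the adjacent side. Applying $\aug$ reduces the claim to $\sum_{s\in F}a\cdot_s b=2\,a\cdot_\Gamma b$, which is Theorem~\ref{s-inter2}; to promote this to an identity in $M$ I would match the coefficient of each free homotopy class on the two sides. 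The expected mechanism is that an honest transversal intersection $x\in a\cap b$, lying in some disk, is detected with multiplicity two by the corner pairings surrounding it and contributes $2\,\varepsilon_x(a,b)\langle a_xb_x\rangle$, while every pseudo-intersection pair not forced by such a crossing cancels against a partner, the cancellation being organized by the contractibility of the complementary disks (so that the relevant connecting subloops are null-homotopic and the surviving loops are precisely the Goldman loops $\langle a_xb_x\rangle$). I expect this last part — the simultaneous verification of the factor $2$, the identification of the surviving loop classes, and the cancellation of all spurious pseudo-intersections — to be the main obstacle; the augmentation computation of Theorem~\ref{s-inter2} guarantees that the signed count balances, and the disk structure supplies the contractions that upgrade that balance to the loop-level equality \eqref{s-inter3++}.
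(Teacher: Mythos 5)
Your treatment of the first assertion (well-definedness, skew-symmetry, and isotopy invariance of $[\,,\,]_s$) is essentially sound, and it takes a genuinely different route from the paper. You verify invariance directly under the singular events of a generic homotopy of $a$ relative to $s$ (finger tangency with an edge, passage of a strand through the center, sweep of a double point across $s$), with the key center-crossing cancellation organized by the constancy of $\sigma(g)\,\mu_{p_g}(a)$ and of the class $\langle a_{p_g}b_q\rangle$ over the edges $g$ at the center. The paper never argues this way: it builds from $s$ a quasi-surface $X_\star(s)$ whose gates are arcs of the boundary of a regular neighborhood of $s$, takes $[\,,\,]_s$ to be the intrinsic bracket $[-,-]_{X_\star(s)}$ (well-defined, skew-symmetric, and isotopy-invariant by the general theory of Sections~\ref{section3}--\ref{Stars}), and only afterwards computes that this intrinsic bracket equals the corner formula (Lemma~\ref{rigovbvbq1++++a}). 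Your route is more self-contained; the paper's amortizes one invariance proof over the form, the bracket, and the cobracket at once.

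The second assertion is where your proposal has a genuine gap, and you flag it yourself as the "main obstacle." The claims that an honest intersection $x\in a\cap b$ is detected with multiplicity two by the surrounding corner pairings and that every spurious pseudo-intersection pair cancels against a partner are precisely the content of \eqref{s-inter3++}, not consequences of anything you have established: the augmentation only controls the total signed count, so it cannot certify either the coefficient $2$ on $\langle a_xb_x\rangle$ or the cancellation of the classes $\langle a_pb_q\rangle$ that are not of the form $\langle a_xb_x\rangle$. Note moreover that a point $x\in a\cap b$ lies in a complementary disk whose corners generally belong to \emph{several different} stars of $F$, so the multiplicity two must be assembled globally from pairings at distinct stars; there is no local mechanism at a single corner that produces it. The paper localizes exactly this difficulty: it cuts $\Gamma$ along the boundaries of regular neighborhoods of the stars and uses additivity over components (the transformations C and D of Section~\ref{Operations on quassi-surfaces}), identifies each complementary component with one of the two model quasi-surfaces of Section~\ref{A vanishing case} (a square with one or two gates), and kills their contribution by an explicit normal form in which all strands of $a$ are pushed to one side of the square and all strands of $b$ to the other, so that no intersections and no chords survive. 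Some such explicit vanishing argument for the complementary disks is indispensable; without it, your proof of \eqref{s-inter3++} is a plan rather than a proof.
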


One may ask whether the bracket $[\, ,\,] _s$ shares the fundamental properties of Goldman's bracket, namely, whether it  satisfies the Jacobi identity and induces Poisson brackets  on the moduli spaces of~$\Gamma$.  In general, the answer to both questions is negative though some weaker results  hold true, see \cite{Tu2}.  Note also that in analogy with Goldman's bracket, Kawazumi and Kuno \cite{KK} defined an action of the Lie algebra $M$ on the modules generated by homotopy classes of paths in~$\Gamma$ with endpoints in $\partial \Gamma$. The Kawazumi-Kuno action can be computed similarly to Theorem~\ref{s-inter++} in terms of star-fillings. The same methods   work   for the double brackets of surfaces defined in  \cite{MT}, this will be discussed  elsewhere.

We next state  our results concerning  the Lie cobracket on loops in~$\Gamma$, see ~\cite{Tu1}.  For a loop~$a$ in~$\Gamma$,   set $\langle a \rangle_0 =\langle a \rangle  \in  \mathcal L \subset   M $  if~$a$ is non-contractible and $\langle a \rangle_0=0\in M $ if~$a$ is contractible. We say that the loop~$a$ is \emph{generic} if all its self-intersections are double transversal intersections. The set of self-intersections    of a generic loop~$a$   is finite and is denoted    $\# a  $.   The loop~$a$ crosses each point  $r\in \# a$ twice; we let $v^1_r, v^2_r$ be the tangent vectors of~$a$ at~$r$ numerated so    that the pair $(v^1_r, v^2_r)$ is positively oriented. For $i=1,2$,   let $a^i_r$ be the loop   starting in~$r$ and going  along~$a$   in the direction of the vector $v^i_r$    until the first return to~$r$. Up to parametrization, $a=a^1_ra^2_r$ is the product of the  loops $a^1_r, a^2_r$. The cobracket $\nu_\Gamma$ is a linear map $ M  \to M  \otimes M $ defined on the basis ${\mathcal L}  \subset M$ by
$$\nu_\Gamma( \langle a \rangle )=\sum_{r\in \# a} \, \langle a^1_r \rangle_0 \otimes \langle a^2_r \rangle_0  - \langle a^2_r \rangle_0 \otimes \langle a^1_r \rangle_0  $$
for any generic loop~$a$ in~$\Gamma$. The cobracket $\nu_\Gamma$ is skew-symmetric in the sense that its composition with the permutation in $M \otimes M$ is equal to $-\nu_\Gamma$.

For any  star~$s$ in~$\Gamma$ we define a  linear map $\nu_s:M \to M  \otimes M$.  Given an $s$-generic loop~$a$ in~$\Gamma$  and   points $p_1, p_2$ of~$ s $ traversed by~$a$, we   write $a_{p_1, p_2}$ for the loop going from~$p_1$ to~$p_2$  along~$a$ and then going back to~$p_1$
along a path in~$s$.   The   free homotopy class of this loop  does not depend on the choice of the latter path.  Set 
$$  \nu_s(\langle a \rangle) $$
$$= \sum_{e \in {\text {Edg}}(s)} \,  \sum_{p_1\in a\cap e, p_2\in a\cap e^+} \mu_{p_1}(a)  \,\mu_{p_2 }(a) \, \big ( \langle a_{p_1, p_2} \rangle_0 \otimes \langle a_{p_2,p_1} \rangle_0  - \langle a_{p_2, p_1} \rangle_0  \otimes \langle a_{p_1, p_2} \rangle_0   \big ) .
$$

  \begin{theor}\label{s-inter++cobr}   The map $\langle  a \rangle   \mapsto \nu_s(\langle  a \rangle ) $ defines a skew-symmetric cobracket $M  \to M  \otimes M $  depending only on the isotopy class of~$s$ in~$\Gamma$.
  For any star-filling~$F$ of~$\Gamma$, we have
\begin{equation}\label{s-inter3++cobr}
  2 \,  \nu_\Gamma=    \sum_{s\in F} \nu_s.  \end{equation}
\end{theor}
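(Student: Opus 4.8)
The plan is to establish the two assertions of Theorem~\ref{s-inter++cobr} in turn, following the scheme underlying the intersection form (Theorem~\ref{s-inter2}) and Goldman's bracket (Theorem~\ref{s-inter++}) and adapting it to the self-intersection, tensor-valued setting. Skew-symmetry is immediate, since the summand of $\nu_s$ is manifestly antisymmetric under transposition in $M\otimes M$; it remains to prove that $\nu_s$ is a homotopy invariant. Fixing $s$, I observe that any two $s$-generic representatives of a free homotopy class are joined by a homotopy which, after a general-position perturbation relative to $s$, is a finite concatenation of three elementary moves: (i) ambient isotopies keeping the loop $s$-generic; (ii) \emph{finger moves}, in which a small arc of the loop is pushed across a single edge $e$, creating or deleting two points $p,p'\in a\cap e$ with $\mu_p(a)=-\mu_{p'}(a)$; and (iii) \emph{center moves}, in which an arc is pushed across the center of $s$. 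Since the leaves lie on $\partial\Gamma$, no move crosses a leaf, and self-tangencies or triple points of the loop away from $s$ leave both $a\cap e$ and the classes $\langle a_{p_1,p_2}\rangle_0$ unchanged, so only (ii) and (iii) matter.

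Move (i) changes nothing. For move (ii) the two new points have opposite signs and, when paired with any third point $q$ on an adjacent edge, give decomposition loops $a_{p,q}$ and $a_{p',q}$ that differ only by a contractible excursion near the finger tip, hence are freely homotopic; the two contributions therefore cancel, and no pair contains both $p$ and $p'$ since they lie on the same edge. The essential case is the center move. Here I set up a local model at the center, where the edges meet a small circle in cyclic order, and show that the tensor-decorated sum telescopes around this cyclic order exactly as the undecorated form of Theorem~\ref{s-inter2} does. The point is that the decomposition loops attached to the points created or destroyed by the move differ from the old ones only by a homotopy supported in a disk neighborhood of the center, so the classes $\langle a_{p_1,p_2}\rangle_0$ are unaffected and the invariance reduces to the cyclic cancellation of the signs $\mu_p(a)$ already governing the case of $\cdot_s$. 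Finally, for a fixed loop the data $a\cap e$, the signs $\mu_p(a)$, the cyclic order at the center, and the classes $\langle a_{p_1,p_2}\rangle_0$ are all invariants of the isotopy type of the pair $(s,a)$, so $\nu_s$ depends only on the isotopy class of $s$.

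For the star-filling identity I note that both sides of~\eqref{s-inter3++cobr} are homotopy invariants of the loop: the left side because $\nu_\Gamma$ is, and the right side by the first part applied to each $s\in F$ with $F$ fixed. It therefore suffices to verify~\eqref{s-inter3++cobr} on one convenient representative of each free homotopy class. I choose a generic loop $a$ in general position with respect to $F$, with all of its self-intersections lying in the interiors of the complementary disks, and organize the right-hand sum by these disks. The key local statement is a matching: each genuine self-intersection $r\in\# a$, inside a disk $d$, is detected by exactly two pseudo-self-intersections — pairs $(p_1\in a\cap e,\,p_2\in a\cap e^+)$ at the centers adjacent to $d$ — because the four strand-ends of the two branches of $a$ through $r$ occur in the cyclically alternating order forced by a transversal crossing, producing precisely two like-oriented corner pairs. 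For each such pair the loops $a_{p_1,p_2},a_{p_2,p_1}$ are freely homotopic to the branches $a^1_r,a^2_r$ (the connecting paths in $s$ being absorbed into $d$), the signs $\mu_{p_1}(a)\,\mu_{p_2}(a)$ reproduce the orientation sign built into $\nu_\Gamma$ via $(v^1_r,v^2_r)$, and the antisymmetrized tensors agree; the two matched contributions sum to the factor~$2$. All remaining pseudo-self-intersections, which do not come from genuine crossings, cancel in skew pairs or carry contractible classes killed by $\langle\cdot\rangle_0$, just as the spurious contributions cancel in the invariance argument.

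The main obstacle I expect is the bookkeeping in this last local matching: one must track the free homotopy classes of the decomposition loops and the orientation signs simultaneously through the deformation bringing $a$ into good position, and check that the reduction $\langle\cdot\rangle_0$ is applied compatibly on both sides. Pinning down the orientation conventions so that the matched pair yields $+2$ rather than $0$ or $-2$, and confirming that every pseudo-self-intersection not arising from a genuine crossing indeed cancels after antisymmetrization, is where the care is needed; the remainder is a faithful transcription of the arguments for Theorems~\ref{s-inter2} and~\ref{s-inter++}.
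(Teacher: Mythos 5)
Your reduction of \eqref{s-inter3++cobr} to one convenient representative per free homotopy class is legitimate once the first claim of the theorem is in place, but the local matching you then invoke is not correct as stated, and this is where the proof genuinely breaks. Take the annulus $S^1\times[0,1]$ with the star-filling consisting of the single two-edge star $s$ with center $(1,1/2)$ and leaves $(1,0)$, $(1,1)$, and let $a$ be a generic loop winding twice around the core, positioned inside $S^1\times(0,1/2)$ so that its unique self-intersection $r$ lies in the complementary disk and both crossings of $a$ with $s$ lie on the same edge. Then $r$ is detected by \emph{zero} pairs $(p_1\in a\cap e,\ p_2\in a\cap e^+)$, not two; the identity survives only because the tensor contribution of $r$ to $\nu_\Gamma$ happens to vanish here. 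In general the number of pseudo-self-intersections attached to a given $r$ depends on which edges the four strand-ends lie on, and those edges need not be consecutive at any center. Worse, for an \emph{embedded} loop $a$ one has $\nu_\Gamma(\langle a\rangle)=0$, while a single pair $(p_1,p_2)$ on consecutive edges of some star can contribute a nonzero, non-contractible antisymmetric tensor to $\nu_s(\langle a\rangle)$; these terms cancel only after summing over corners of \emph{different} stars of the filling. So your assertion that the leftover pseudo-self-intersections ``cancel in skew pairs or carry contractible classes'' is exactly the content requiring proof, and the cancellation is global, not local to a complementary disk. (Your invariance argument for $\nu_s$ itself is closer to workable --- the finger-move cancellation is right, and the center move does telescope --- but that computation is also only asserted, not carried out.)

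The paper's route avoids this matching problem entirely. It defines $\nu_s$ as the cobracket $\nu_{X_\star}$ of a quasi-surface whose surface core is a disk neighborhood of $s$, proves invariance of a gate-orientation--dependent cobracket $\nu_{X,\omega}$ under a complete list of local moves, and obtains the factor $2$ for free from the skew-symmetrization $\nu_X=\nu_{X,\omega}-P\nu_{X,\omega}$ applied to the trivial quasi-surface structure on $\Gamma$. The star-filling identity then follows from two structural facts proved once and reused for $\cdot_\Gamma$, $[\,,\,]_\Gamma$ and $\nu_\Gamma$ alike: invariance of $\nu_X$ under cutting along arcs (where the chord contributions at the two parallel gates created by each cut cancel because the signs $\varepsilon(\omega,k)$ are opposite), and additivity over the components of the surface core, with the complementary disks contributing zero as in the examples of Section~\ref{A vanishing case}; the explicit edge formula for $\nu_s$ is then a separate computation (Lemma~\ref{rigovbvbq1++++abbbn}) with an adapted neighborhood. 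If you want to keep your direct approach, you must replace the claimed bijection by an honest global cancellation argument, e.g.\ by introducing an ordering of the crossings on each edge (the analogue of the paper's gate orientation) and tracking how the sum changes as that ordering varies.
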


The cobracket $\nu_\Gamma$ has a refinement depending on a framing of~$\Gamma$, see \cite{Tu1}, Section~18.1 and \cite{AKKN2}; it would be interesting to extend Theorem~\ref{s-inter++cobr}   to this refined cobracket.

 Theorems \ref{s-inter}--\ref{s-inter++cobr} are proved in Section~\ref{Proof of Theorems and their extension to closed surfaces} where we also discuss the case of closed surfaces.  The proofs of Theorems \ref{s-inter}--\ref{s-inter++cobr} are based on 
  the theory  of   quasi-surfaces developed in Sections \ref{A topological example}--\ref{Stars}.

This work was  supported by the NSF grant DMS-1664358.

\section{Quasi-surfaces}\label{A topological example}

\subsection{Basics}\label{Terminology}\label{notat} By  a   \emph{surface} we mean  a  smooth 2-dimensional manifold  with   boundary.  
A \emph{quasi-surface} is a  topological space~$X$ obtained by gluing an oriented   surface~$\Sigma$ to a topological space~$Y$ along a continuous map   $   \alpha  \to Y$ where $ \alpha \subset \partial \Sigma $ is the  union of a finite number of    disjoint  segments  in $\partial \Sigma$.  
Clearly,    $Y\subset X$ and   $X\setminus Y =\Sigma\setminus \alpha$.  We call~$Y$ the \emph{singular core} of~$X$ and call~$\Sigma$ the \emph{surface core} of~$X$. 
We   fix a closed    neighborhood  of~$\alpha$ in~$\Sigma$ and 
  identify   it  with  $\alpha \times [-1,1]$ so that 
  $$\alpha=\alpha \times \{-1\} \quad  \quad {\text {and}} \quad \quad  \partial \Sigma  \cap (\alpha \times [-1,1])=\alpha   \cup (\partial \alpha \times [-1,1]).$$
  The  surface
$$\Sigma'=\Sigma \setminus  (\alpha \times [-1,0)) \subset \Sigma \setminus \alpha \subset X   $$  is  a  copy of $\Sigma$    embedded in~$X$. 
We    provide~$\Sigma'$  with the   orientation induced  from that of~$\Sigma$ and   call~$\Sigma'$  the \emph{reduced surface core} of~$X$.

Set  $\pi_0=\pi_0(\alpha)=\pi_0(\alpha \times \{0\})$.   For    $k\in  \pi_0 $,   we   let   $ \alpha_k $ be  the corresponding   segment component of $\alpha \times \{0\} \subset \partial \Sigma' \subset X$. 
  We call   $\alpha_k$    the  \emph{$k$-th gate} of~$X$.
The gates $\{\alpha_k\}_{k\in \pi_0}$      separate $
\Sigma'\subset X$ from the rest of~$X$.        A   \emph{gate orientation}  of~$X$     is   an orientation of all gates. 
    Gate orientations of~$X$  canonically correspond to orientations of the 1-manifold~$\alpha $.
  For a gate  orientation~$\omega$ of~$X$, we let~$\overline \omega$   be  the  gate orientation of~$X$   opposite to~$\omega$ on all  gates. 
 
   We  keep the notation  $X,Y, \alpha, \Sigma, \Sigma', \{\alpha_k\}_k, \pi_0$      till  the end of Section~\ref{Stars}.

\subsection{Loops in~$X$}\label{koko}\label{kokoff}\label{koko89} 
By a \emph{loop} in~$X$ we mean a   continuous map  $a:S^1\to X$. 
 A \emph{generic loop}~$a$ in~$X$ is  a   loop   in~$X$ such that (i)  all  branches of~$a$   in~$\Sigma'$ are smooth immersions meeting $\partial \Sigma'$ transversely   at a finite set of points lying  in the  interior  of the gates, and (ii)  all self-intersections of~$a$ in $\Sigma'$ are double transversal intersections  in $\Int(\Sigma')=\Sigma' \setminus \partial \Sigma'$.  The set of self-intersections in $\Sigma'$    of a generic loop~$a$   is denoted by   $\# a $. This set is finite and  lies in $\Int(\Sigma')$. Using cylinder neighborhoods of the gates, it is easy to see that   any loop in~$X$ may be    transformed  into a generic loop by a small deformation.

More generally, a finite family of   loops in~$X$    is  \emph{generic}  if    these loops  are generic and all  their intersections   in~$\Sigma'$ are double transversal intersections   in $\Int(\Sigma')$. In particular,   these loops  can  not meet at the gates. 
As above, any finite family of loops   in~$X$ may be transformed into a generic  family  by a small deformation.

 For a loop $a$ in~$X$ and any point~$p$ of $a(S^1)$ which is not a self-intersection of~$a$ (i.e., which is traversed by~$a$ only once),
 we let $a_p$ be the   loop  which starts at~$p$ and goes along~$a$ until coming back to~$p$. For any  $k\in \pi_0$, we set $a\cap \alpha_k=a(S^1) \cap \alpha_k $.   If the loop~$a$ is generic  then it never traverses a point of    $a \cap \alpha_k$   more than once and the set $a\cap \alpha_k$ is finite.

\subsection{Local moves}\label{moves}   
We define six  local  moves $L_0 - L_5$  on  a  generic loop~$a$  in~$X$  keeping its  free homotopy class. The move $L_0$ is a deformation of~$a$  in the class of generic loops. This move preserves the number $\card(\# a)$.
  The   moves $L_1 -  L_3$   modify~$a$  in a small disk in $\Int(\Sigma')$.   The   move $L_1$ adds a small curl to~$a$ and increases $\card(\# a)$  by~$1$. The  move $L_2$ pushes a branch of~$a$ across another branch of~$a$ increasing $\card(\# a)$  by~$2$.  The  move $L_3$ pushes a branch of~$a$  across a double point  of~$a$ keeping $\card(\# a)$. The   moves $L_4, L_5$   modify~$a$  in a   neighborhood of  a gate $\alpha_k$ in~$X$.     The  move $L_4$ pushes a branch of~$a$ across    $\alpha_k$ increasing $\card(a \cap \alpha_k)$ by~2 and    keeping  $\card(\# a)$. The  move  $ L_5$  pushes a double point of~$a$ across $\alpha_k$  keeping $\card(a \cap \alpha_k)$ and  decreasing   $\card(\# a)$  by  $ 1$.  
We call the moves $L_0 - L_5$ and their inverses   \emph{loop moves}.  It is clear  that   generic loops in~$X$ are freely homotopic  if and only if  they can be related  by a finite sequence of loop moves.

\section{Homological  intersection  forms}\label{section2}

We define   homological   intersection forms in $H_1(X)$. Here and below,  by $H_1(X)$ we mean the 1-homology of the underlying topological space of the quasi-surface~$X$.

\subsection {The intersection  form of $(X, \omega)$}\label{fofo}  
Given a gate orientation~$\omega$ of~$X$, we   define   a   bilinear form in $ H_1(X)$  
  called the \emph{homological intersection  form of the pair $(X, \omega)$}.   The idea  is to properly  position the loops   in~$X$  near the gates and then  to count their algebraic number of intersections  in~$\Sigma' $. We begin with definitions. 

For any      points $p,q$ of a gate $ \alpha_k$,  we  say that~$p$ \emph{lies on  the $\omega$-left of~$q$} and write  $p<_\omega q$
if $p\neq q$ and  the $\omega$-orientation  of $\alpha_k$   leads from~$p$ to~$q$.  
We say that an (ordered)  pair  of   loops $a,b$   in~$X$  is     \emph{$\omega$-admissible}  if it is generic  (in the sense of Section~\ref{koko}) and   the crossings of~$a$ with any gate  lie on the $\omega$-left of the  crossings of~$b$ with this gate.   Taking  a   generic pair of  loops $a,b$  in~$X$ and pushing the branches of~$a$  crossing the gates to the $\omega$-left   of the crossings of~$b$ with the gates, we obtain an  $\omega$-admissible pair of loops.
Thus,  any pair of loops in~$X$ may be deformed into an $\omega$-admissible pair. 

For each  generic pair of loops $a,b$ in~$X$, we    consider the finite set    $$a\cap b= a(S^1) \cap b(S^1)\cap \Sigma' \subset   \Int(\Sigma') .$$   For     $r \in a\cap b$,   set $\varepsilon_r  (a,b) =  1$ if  the tangent vectors of~$a$ and~$b$ at~$r$ form a    positive basis in the tangent space of~$\Sigma'$ at~$r$ and   set $\varepsilon_r   (a,b)=-1$ otherwise.

\begin{lemma}\label{1alele}   For any $\omega$-admissible pair $a,b$ of   loops in~$X$,   the integer  \begin{equation}\label{actionqq} a  \bullet_{X, \omega} b=  \sum_{r\in a\cap b} \varepsilon_r  (a,b)   \end{equation} depends only on  the homology classes of $a,b$ in  $ H_1(X)$. The formula $(a,b) \mapsto   a  \bullet_{X, \omega} b$ defines a bilinear form $$
\bullet_{X,\omega}: H_1(X)\times H_1(X) \to \ZZ
$$
\end{lemma}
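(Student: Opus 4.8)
The plan is to establish the two assertions in turn: that $a \bullet_{X,\omega} b$ is unchanged when $a$ or $b$ is replaced by a freely homotopic loop (through $\omega$-admissible representatives), and then to promote this homotopy invariance to genuine homology invariance together with bilinearity. For the first step I would use the characterization at the end of Section~\ref{moves}: two generic loops in $X$ are freely homotopic if and only if they are related by a finite sequence of loop moves. It then suffices to verify that each move $L_0,\dots,L_5$ and its inverse, applied to $a$ with $b$ fixed (and, by the symmetric argument, to $b$ with $a$ fixed), preserves the signed count $\sum_{r\in a\cap b}\varepsilon_r(a,b)$ once $\omega$-admissibility has been restored.

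The moves fall into two groups. The moves $L_0$--$L_3$ are supported in a small disk in $\Int(\Sigma')$. Here $L_0$ is an isotopy, under which transverse points of $a\cap b$ are created or destroyed only in pairs of opposite sign; $L_1$ and $L_2$ alter $a$ only at its own self-intersections, which belong to $\#a$ and never lie in $a\cap b$; and $L_3$ slides a branch of $a$ across a double point of $a$, merely permuting the points of $a\cap b$ in the disk while preserving their signs. In all four cases the count is unchanged. The substantive moves are $L_4$ and $L_5$, supported near a gate $\alpha_k$, and this is exactly where $\omega$-admissibility intervenes: since the crossings of $a$ with $\alpha_k$ lie strictly on the $\omega$-left of those of $b$, the excursion of $a$ into the singular core produced by $L_4$, and the double point pushed across $\alpha_k$ by $L_5$, can be kept entirely on the $\omega$-left of every branch of $b$ and hence disjoint from $b$ in $\Sigma'$. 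No point of $a\cap b\subset\Int(\Sigma')$ is then created or lost.

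To obtain the bilinear form I would first note that the count is visibly additive under disjoint union, $(a\sqcup a')\bullet_{X,\omega}b = a\bullet_{X,\omega}b + a'\bullet_{X,\omega}b$, since the intersection sets are disjoint; this extends the definition to arbitrary $\omega$-admissible $1$-cycles. As $X$ is connected, every class of $H_1(X)$ is represented by a single loop and the sum of two classes by a disjoint union, so bilinearity is immediate once the count is known to depend only on homology classes. For the latter it is enough to check that an $\omega$-admissible $1$-cycle $z$ bounding a $2$-chain $W$ in $X$ satisfies $z\bullet_{X,\omega}b=0$ for every $b$: because intersections are counted only in the oriented surface core $\Sigma'$ and $b$ is closed, the signed count of $\partial W$ against $b$ should vanish by the boundary identity $\partial W\cdot b=\pm\,W\cdot\partial b=0$. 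Applying this to $z=a\sqcup\overline{a'}$ for homologous loops $a,a'$ yields $a\bullet_{X,\omega}b=a'\bullet_{X,\omega}b$, and symmetrically in the second variable.

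I expect the main difficulty to lie in the gate moves $L_4,L_5$ and, more precisely, in the bookkeeping of $\omega$-admissibility. A homotopy of $a$ realized by loop moves may pass through loops whose gate crossings no longer lie on the $\omega$-left of those of $b$; I must therefore argue that admissibility can always be restored by an isotopy pushing the offending crossings back to the $\omega$-left without altering $a\cap b$, so that the non-admissible intermediate configurations do not affect the count. Reconciling this gate bookkeeping with the homological boundary argument—which records intersections only in $\Sigma'$ and relies on the $\omega$-convention at the gates rather than on an ambient manifold structure—is the crux of the proof.
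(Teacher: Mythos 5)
Your overall strategy for homotopy invariance (reduce to loop moves and check each one) is in the spirit of the paper, but the step you yourself flag as the crux is where the argument breaks, and the fix you propose is not correct as stated. After a move $L_4$ on~$a$ (or an $L_0$-deformation that slides a gate crossing of~$a$ along a gate), the new crossings of~$a$ with $\alpha_k$ may land on the $\omega$-right of crossings of~$b$; to restore admissibility you must slide them leftward past those crossings of~$b$. Sliding a \emph{single} crossing of~$a$ past a crossing of~$b$ along a gate forces the corresponding strand of~$a$ in $\Sigma'$ to sweep across the strand of~$b$, changing $a\cap b$ by one point — this is exactly the phenomenon quantified later in the paper (Lemma~\ref{1aeee} and Formula~\eqref{chori}), where moving crossings across each other shifts the pairing by $\pm\varepsilon_p(a)\varepsilon_q(b)$. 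So "restore admissibility by an isotopy \ldots without altering $a\cap b$" is false pointwise; only the net signed count survives, and only because the crossings created by $L_4$ come in adjacent pairs of opposite sign. The paper avoids this bookkeeping entirely: it confines~$a$ to sub-segments $\alpha_k^-$ near one endpoint of each gate and~$b$ to sub-segments $\alpha_k^+$ near the other, and uses that $\alpha_k^-$ is a deformation retract of $\alpha_k$ to conclude that homotopic $\omega$-left loops are homotopic \emph{through} $\omega$-left loops. Then every loop move in the homotopy automatically preserves admissibility and no crossing of~$a$ ever has to pass a crossing of~$b$.

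The second gap is in your passage from homotopy to homology invariance. The identity $\partial W\cdot b=\pm\,W\cdot\partial b$ is intersection theory on a manifold; $X$ is not a manifold, the count is taken only in $\Sigma'$, and the contribution of the gates is governed by the $\omega$-convention rather than by transversality of chains, so there is no ready-made boundary formula to invoke (you acknowledge this but do not resolve it). The paper's route is purely algebraic and sidesteps 2-chains altogether: once $a\bullet_{X,\omega}b$ is known to depend only on free homotopy classes and to be additive under composition of loops (intersections of a product loop with~$b$ are the union of the intersections of the factors), the map $\langle a\rangle\mapsto a\bullet_{X,\omega}b$ is an additive function on conjugacy classes of $\pi_1(X)$, hence a homomorphism $\pi_1(X)\to\ZZ$, hence factors through $H_1(X)=\pi_1(X)^{\mathrm{ab}}$; bilinearity then follows as you say. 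I would replace your boundary argument with this abelianization argument and replace the "restore admissibility" step with the $\omega$-left/$\omega$-right retraction.
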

\begin{proof} For each $k\in \pi_0$,    one  endpoint  of  the gate $  \alpha_k$ lies on the $\omega$-left of the other  endpoint. Pick disjoint closed  segments  $\alpha_k^- \subset \alpha_k $ and $\alpha_k^+\subset \alpha_k $     containing  these  two endpoints respectively.  Clearly,   $p<_\omega q$ for all $p\in \alpha_k^- $ and $q\in \alpha_k^+$. 
%We call   $\alpha^-_k  $ and $\alpha^+_k  $ respectively the \emph{$\omega$-left  pass}  and   the %\emph{$\omega$-right pass}    of    the gate $ \alpha_k$.  
We  say that a loop  in~$X$ is \emph{$\omega$-left} (respectively, \emph{$\omega$-right}) if it is generic and meets the gates of~$X$ only  at points of $\cup_k \alpha^-_k$ (respectively, of $\cup_k \alpha^+_k$). Given an $\omega$-admissible pair of loops $a,b$ in~$X$, we can  push the branches of~$a$ crossing the gates to the left and  push the branches of~$b$ crossing the gates to the right without creating or destroying   intersections between~$a$ and~$b$. Consequently,~$a$ is homotopic (in fact, isotopic) to an $\omega$-left loop $a'$ and~$b$ is homotopic to an $\omega$-right loop $b'$ such that  $ a  \bullet_{X, \omega} b= a'  \bullet_{X,\omega} b'$. Since $\alpha^-_k $ is a  deformation retract  of   $\alpha_k  $ for all~$k $, any $\omega$-left loops     homotopic    in~$X$ are homotopic in the class of $\omega$-left loops. Similarly, any   $\omega$-right loops     homotopic   in~$X$ are homotopic in the class of $\omega$-right loops. Such homotopies  of the loops  $a', b'$ expand as compositions of loop moves    keeping  $a', b'$  respectively  $\omega$-left and $\omega$-right. The latter moves  obviously preserve   $a'  \bullet_{X,\omega} b'$. Therefore  the  integer $ a  \bullet_{X, \omega} b=a'  \bullet_{X,\omega} b'$ depends only on the (free)  homotopy classes of  $a,b$ in~$X$. Moreover, since  $ a  \bullet_{X, \omega} b$ depends  linearly  on~$a$ and~$b$,  it depends only on the  homology classes of $a,b$. This implies the  claim of the lemma.
\end{proof}

We stress  that the crossings  in  $  X\setminus \Sigma'$ of  an $\omega$-admissible pair of  loops $a,b$     do not contribute to $a  \bullet_{X, \omega} b$. Note also that for such $a,b$,  the   pair $b,a$ is $\overline\omega$-admissible. 
 Using these pairs to compute   $a  \bullet_{X,\omega} b$ and $b  \bullet_{X, \overline \omega} a$, we obtain    the same  terms with opposite signs. 
Hence, for any $x,y \in H_1(X)$,   \begin{equation}\label{chorisdf}     x  \bullet_{X, \omega} y   = - 
y  \bullet_{X, \overline \omega} x    .\end{equation}
  
  \subsection{The intersection  form of $X$}\label{fofofor}  We state the main result
   of this section.
  
   \begin{theor}\label{1aee}  The skew-symmetric bilinear form $\bullet_{X}: H_1(X) \times H_1(X) \to \ZZ$  defined by
$$
x \bullet_{X} y  =   
 x  \bullet_{X, \omega} y   - y  \bullet_{X,\omega}  x $$
 for all $x,y \in H_1(X)$ and a gate orientation~$\omega$ of~$X$  does not depend on~$\omega$.
 \end{theor}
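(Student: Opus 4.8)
The plan is to fix a gate orientation $\omega$ and show that $\bullet_X$ is unchanged when the orientation of a single gate is reversed. Since any two gate orientations of~$X$ differ by reversing the orientations on some subset of the gates, and such a subset-reversal is a composition of single-gate reversals, this gives the asserted independence of $\bullet_X$ on~$\omega$. (As a sanity check, equation~\eqref{chorisdf} already yields the case of the total reversal $\omega \mapsto \overline\omega$: substituting it into the definition of $\bullet_X$ twice turns $\omega$ into $\overline\omega$ on the nose. This handles the simultaneous flip of all gates but not of an individual one, so the local argument below is still needed.) Fix $k\in\pi_0$ and let $\omega_k$ be the gate orientation agreeing with $\omega$ on every gate except $\alpha_k$, where it is opposite. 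Writing $(x\bullet_X y)_\omega$ for the form computed from $\omega$, I must show $(x\bullet_X y)_{\omega_k}=(x\bullet_X y)_{\omega}$ for all $x,y\in H_1(X)$.

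The first step is to measure how $\bullet_{X,\omega}$ itself changes under this single-gate reversal. Given a generic pair $a,b$, both $\bullet_{X,\omega}$ and $\bullet_{X,\omega_k}$ may be computed from representatives that are $\omega$-admissible at every gate except $\alpha_k$; the two agree everywhere except that at $\alpha_k$ the crossings of~$a$ lie on the $\omega$-left of those of~$b$ in the first case and on the $\omega$-right in the second. Passing from one to the other amounts to sliding the branches of~$a$ that cross $\alpha_k$ along the gate, past the branches of~$b$, leaving everything else fixed; by Lemma~\ref{1alele} the resulting value of $\bullet_{X,\omega_k}$ is independent of how this slide is carried out. I would perform it on the $\Sigma'$-side of the gate, so that each time one crossing of~$a$ passes one crossing of~$b$ exactly one transversal double point is created in $\Int(\Sigma')$. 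A local computation in a half-plane model of the gate (with $\Sigma'$ the upper half-plane and $\alpha_k$ oriented by~$\omega$) shows that the sign of this double point equals $-\mu_p(a)\,\mu_q(b)$, where $p\in a\cap\alpha_k$ and $q\in b\cap\alpha_k$ are the crossings being exchanged and the signs are taken with respect to the $\omega$-orientation of $\alpha_k$. Summing over all such pairs gives
\[
a\bullet_{X,\omega}b-a\bullet_{X,\omega_k}b=(a\cdot_\omega\alpha_k)(b\cdot_\omega\alpha_k),\qquad a\cdot_\omega\alpha_k:=\sum_{p\in a\cap\alpha_k}\mu_p(a).
\]

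The key point is that the right-hand side is symmetric in $a$ and~$b$. Since $\bullet_{X,\omega}$ and $\bullet_{X,\omega_k}$ both descend to $H_1(X)$ by Lemma~\ref{1alele}, their difference descends as well; writing $D_k(x,y)$ for it we obtain $D_k(x,y)=D_k(y,x)$. The same local computation applied with the roles of $a$ and~$b$ interchanged (now sliding the branches of~$b$) produces the same universal sign, so the symmetry holds exactly and not merely up to sign. Granting it,
\[
(x\bullet_X y)_{\omega_k}=x\bullet_{X,\omega_k}y-y\bullet_{X,\omega_k}x=\big(x\bullet_{X,\omega}y-D_k(x,y)\big)-\big(y\bullet_{X,\omega}x-D_k(y,x)\big)=(x\bullet_X y)_{\omega},
\]
which settles the single-gate reversal and hence the theorem.

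I expect the main obstacle to be precisely the sign analysis in the local model: verifying that each elementary slide of one gate-crossing past another produces a single double point of sign $-\mu_p(a)\,\mu_q(b)$, and that this universal sign is the same whether one slides the branches of~$a$ or of~$b$. Both reduce to a finite check of the four up/down configurations of the two branches at the gate in the half-plane model, but they are exactly what make $D_k$ symmetric and therefore invisible to the antisymmetrization defining $\bullet_X$.
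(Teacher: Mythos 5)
Your proposal is correct and follows essentially the same route as the paper: reduce to reversing a single gate $\alpha_k$, show that $\bullet_{X,\omega}$ changes by the symmetric term $\pm\,\varepsilon(\omega,k)\,v_k(x)\,v_k(y)$, and conclude that the antisymmetrization is unaffected. The only difference is that you re-derive the sliding/sign computation by hand, whereas the paper has already packaged it as Lemma~\ref{1aeee} and simply applies that lemma to an $\omega$-admissible pair with the flipped orientation.
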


We will prove this theorem in Section~\ref{mhllhodiff}.
 We call   $\bullet_{X}$ the \emph{homological intersection  form of~$X$}. Both   $ \bullet_{X,\omega}$ and $\bullet_{X}$   generalize  the   intersection form in the homology   of~$\Sigma'$:  the value of $ \bullet_{X,\omega}$    (respectively,  of $\bullet_{X}$)  on any pair of  homology classes  of loops in $\Sigma'\subset X$    is equal to    the usual   intersection number  of these loops in~$\Sigma'$ (respectively,  twice this number).  The image of the inclusion homomorphism $H_1(Y)\to H_1(X)$ annihilates both $ \bullet_{X,\omega}$ and $\bullet_{X}$.
 
 To prove Theorem~\ref{1aee},   we  study
  the dependence of  $
\bullet_{X,\omega}$ on~$\omega$. We start with notation. For a  generic loop~$a$ in~$X$, the \emph{sign} $\varepsilon_p(a)$ of~$a$ at a  point $p\in a\cap \alpha_k$ is    $+1$ if~$a$ goes near~$p$ from $X\setminus \Sigma'$ to $\Int (\Sigma')$ and $ -1$ otherwise. The  linear map $v_k: H_1(X)\to \ZZ$ \lq\lq dual'' to the gate $\alpha_k$  carries the homology class of any generic loop~$a$ to $\sum_{p\in a \cap \alpha_k} \varepsilon_p(a)$.    For any   loops $a,b$ in~$X$,  we define  the  set of triples   
$$T(a,b)=\{(k,p,q) \, \vert \, k\in \pi_0, p\in a\cap \alpha_k, q\in b\cap \alpha_k, p\neq q \}.$$
Given a gate orientation~$\omega$ of~$X$, we     set  
$$T_\omega (a,b)=\{(k,p,q) \in T(a,b) \, \vert \, q<_\omega p  \} \subset T(a,b).$$ % Clearly, 
%$$ T(a,b) \setminus  T_\omega (a,b)  =\{(k,p,q) \in T(a,b) \, \vert \, p <_\omega q  \}= T_{\overline \omega} (a,b) .$$
For $k\in \pi_0$, we   set  $\varepsilon(\omega, k)=+1$ if the $\omega$-orientation of~$\alpha_k$ is compatible with the orientation of~$\Sigma'$, i.e.,  if the pair (a $\omega$-positive tangent vector  of  $\alpha_k \subset \partial \Sigma'$, a   vector  directed inside~$\Sigma'$) is  positively oriented in~$\Sigma'$. Otherwise,   $\varepsilon(\omega, k)=-1$.

% The next lemma calculates the crossing number for all generic   pairs   of loops. 

\begin{lemma}\label{1aeee}    For any gate orientation~$\omega$ and any   $x,y \in H_1(X)$ represented by a generic pair of loops $a,b$ in~$X$, we have 
\begin{equation}\label{chorieemvmmve}
 x  \bullet_{X, \omega} y    =  \sum_{r\in a\cap b } \varepsilon_r   (a,b) +\sum_{(k,p,q) \in T_{\omega} (a,b)}\,    \varepsilon(\omega, k)   \,\varepsilon_p(a)  \,\varepsilon_q(b). \end{equation}
\end{lemma}
\begin{proof}  Consider an $\omega$-admissible pair of  loops $a',b$  where~$a'$ is obtained from~$a$ by pushing its branches   crossing the gates to the $\omega$-left of the branches of~$b$ crossing  the   gates.  This  transformation modifies~$a$ in a small neighborhood of the gates so that   $a', b$ have the same intersections in $\Sigma'$ as $a,b$ plus one  additional intersection  $r=r(k,p,q) \in \Sigma'$   for each  triple $(k,p,q)\in T_\omega (a,b)$. It is easy to check  that  $\varepsilon_r  (a', b)= \varepsilon(\omega, k) \, \varepsilon_p(a)\,  \varepsilon_q(b)$.  Consequently,    $$
 x  \bullet_{X, \omega} y =
a'  \bullet_{X,\omega} b=\sum_{r\in a'\cap b'} \varepsilon_r  (a',b) $$ 
$$=\sum_{r\in a\cap b} \varepsilon_r  (a,b)+\sum_{(k, p,q) \in T_{\omega} (a,b)} \varepsilon(\omega, k) \, \varepsilon_p(a)\,  \varepsilon_q(b).$$
\end{proof}

 Formula \eqref{chorieemvmmve} generalizes   \eqref{actionqq} because $T_\omega (a,b)=\emptyset$   for any  $\omega$-admissible pair  of loops $a,b$.

 % \lq\lq dual" to the  $k$-th gate~$\alpha_k$. Let $S^1\subset \CC$ be the unit circle with  counterclockwise orientation, and  let $v\in H^1(S^1;R)=R$ be the   generator determined by this orientation.  Let $\alpha \times [0,2]  \to \Sigma$ be a neighborhood of $\alpha$ in~$\Sigma$ extending the neighborhood   $\alpha \times [0,1]  \subset \Sigma$ considered  in Section~\ref{notat}.  Set   $v_k=f_k^* (v)\in   H^1(X;R)$ where  the map $f_k:X\to S^1$   
  %carries each  point $(x,t)\in \alpha_k\times (0,2)\subset X$ to $exp(\pi i t)\in S^1$ and carries the rest of~$X$ to $1\in S^1$.  The  This  easily implies  that  $\sum_{k\in \pi_0} v_k=0$. 

 \subsection{Proof of Theorem~\ref{1aee}}\label{mhllhodiff} For any ${l}\in \pi_0$,  we let ${l}\omega$ be the gate orientation   obtained from~$\omega$ by inverting the direction of the gate~$ \alpha_{{l}}$ while  keeping  the directions of all  other gates.
We claim that for any $x,y\in H_1(X)$,
\begin{equation}\label{chori}
x  \bullet_{X, {l} \omega} y =   
 x  \bullet_{X, \omega} y   - \varepsilon(\omega, {l}) \,  v_{{l}}(x) \,  v_{{l}}(y). \end{equation}
%As a consequence, for any  set  $K \subset \pi_0$,
%\begin{equation}\label{choriupso}
  %x \bullet_{K\omega} y     =x \bullet_{\omega} y - \sum_{k\in K} \varepsilon(\omega, k) \,  v_k(x) \, v_k(y). %\end{equation}
Indeed, pick an $\omega$-admissible  pair of loops $a,b$ representing respectively $x,y$. We  compute $x \bullet_{X,\omega} y  =a \bullet_{X,\omega} b $ from the definition and compute $x \bullet_{X, {l}\omega} y  = a \bullet_{X,{l} \omega} b $ from  Lemma~\ref{1aeee}.   The resulting   expressions   differ   in the sum associated with    $ T_{{l}\omega} (a,b)$. Since the pair $a,b$ is $\omega$-admissible, the set $T_{{l}\omega} (a,b) $  consists of  all triples $({l}, p, q)$ with  $ p\in a \cap \alpha_{{l}} ,q\in b \cap \alpha_{{l}}$. Therefore
$$x  \bullet_{X, {l} \omega} y    =
 x  \bullet_{X, \omega} y + \sum_{p\in a \cap \alpha_{{l}} ,q\in b \cap \alpha_{{l}}}\, \varepsilon({l} \omega, {l})\,  \varepsilon_p(a) \,  \varepsilon_q(b)$$
 $$= 
 x  \bullet_{X, \omega} y  - \varepsilon(\omega, {l}) \,  v_{{l}}(x) \,  v_{{l}}(y).$$
 Formula~\eqref{chori}  implies that
$$x \bullet_{X, {l}\omega} y - y\bullet_{X, {l}\omega} x =  x  \bullet_{X, \omega} y   - y  \bullet_{X,\omega} x $$
for all ${l}\in \pi_0$. This implies the claim of the theorem.

%
%\begin{corol}\label{CQPSa}    For any $x,y\in H_1(X)$, \begin{equation}\label{chori++es}
%x \bullet_{\omega} y +  y \bullet_{\omega} x =\sum_{k\in \pi_0}  \varepsilon(\omega, k) \,  v_k(x) \, v_k(y). \end{equation}
%\end{corol}
%\begin{proof} 
%Applying Formula~\eqref{chori}  consequtively to all elements of $\pi_0$, we get
%$$x \bullet_{\overline \omega} y=x \bullet_{X,\omega} y-\sum_{k\in \pi_0}  \varepsilon(\omega, k) \,  v_k(x) \, v_k(y). $$
%Permuting here $x$, $y$, and using that $y\bullet_{\overline \omega}  x= - x \bullet_{ \omega}  y$,    we obtain  \eqref{chori++es}.
%\end{proof}

\subsection{Computation of $\bullet_X$}\label{The edbdbdform bullet}   To compute $x \bullet_X y$ for  $x,y\in H_1(X)$ we will use the following method. Pick a generic pair of loops $a,b$ in~$X$ representing $x,y$. Let   $\omega_0$ be the  orientation of the gates induced by  the orientation of $\Sigma'\subset \Sigma$ so that $\varepsilon(\omega_0, k) =1$ for all $k\in \pi_0$.  Lemma~\ref{1aeee}  implies that 
  $$
x \bullet_X y = x  \bullet_{X, \omega_0} y   - y  \bullet_{X,\omega_0} x   $$
$$ = 2 \sum_{r\in a\cap b} \varepsilon_r  (a,b)   +\sum_{(k,p,q) \in T(a,b) }\,   \delta  (p,q)    \,\varepsilon_p(a)  \,\varepsilon_q(b)  $$
where $ \delta  (p,q)=1$ for    $q<_{\omega_0} p$ and 
$ \delta  (p,q)=-1$ for  $p <_{\omega_0} q$.     If $a\cap b =\emptyset$, then
\begin{equation}\label{chorieee+bbb+veryryrsss}
x \bullet_X y =  \sum_{(k,p,q) \in T(a,b) }\,   \delta  (p,q)    \,\varepsilon_p(a)  \,\varepsilon_q(b)  . \end{equation}

 \section{The intersection brackets}\label{section3}

We define homotopy    intersection brackets   refining the homological   forms above.

\subsection{The  brackets}\label{The form bullet}   
Let   ${\mathcal L} = {\mathcal L} (X)$   be the set of free   homotopy classes of loops in the quasi-surface~$X$  
 and   let  $ M=M(X)$ be   the free abelian group with basis ${\mathcal L}$. 
Pick a gate orientation~$\omega$ of~$X$. 
By Section~\ref{fofo}, any  pair    $x,y \in {\mathcal L} $ can be represented by an  $\omega$-admissible pair of loops $a,b$ in~$X$. For  a point $r\in a\cap b$,  consider the loops  $a_r, b_r$  which are reparametrizations of $a, b$     based at~$r$.   Consider the product loop $a_r b_r$   and  set
\begin{equation}\label{deff} [x,y]_{X,\omega}=\sum_{r\in a\cap b} \varepsilon_r   (a,b) \langle a_r b_r \rangle \in  M \end{equation} where for a    loop~$c$ in~$X$, we let  $\langle c \rangle \in {\mathcal L} \subset M$ be its free homotopy class.  The sum on the right-hand side of \eqref{deff} is an algebraic sum of all possible ways to graft~$a$ and~$b$ at their intersections in~$\Sigma'$. It is straightforward to see that this sum
  is preserved under   all loop moves on $a,b$ keeping  this pair   $\omega$-admissible. Hence,  $[x,y]_{X,\omega}$ does not depend on the choice of $a,b$ in the   homotopy classes  $x,y$. Extending the map  $(x,y) \mapsto [x,y]_{X,\omega}$ by bilinearity, we obtain a  bilinear bracket $[-, -]_{X,\omega}$ in~$M$.
The proof of Formula~\eqref{chorisdf} applies here and  shows that  for any $x,y\in M$, 
\begin{equation}\label{chorisdfbulle}   
 [x,y]_{X,\omega} =- [y,x]_{X, \overline \omega}.   \end{equation}
 
We now compute the bracket  $[x,y]_{X,\omega}$ from an arbitrary   generic pair of    loops $a,b$ representing $x,y$. Note that for  any points $p  \in a\cap \alpha_k$, $q\in b\cap \alpha_k$ on the same gate,  we can   multiply the loops $a_p, b_q$  based at $p,q$ using a path   connecting  $p,q$ in $\alpha_k$. The product  loop determines an  element of $\mathcal L$   denoted $\langle a_p b_q \rangle$. 

\begin{lemma}\label{1aeee++}   Let $x,y \in {\mathcal L}$  be represented by a generic pair of loops $a,b$. Then 
\begin{equation}\label{chorieee++}
[x,y]_{X,\omega} =  \sum_{r\in a\cap b} \varepsilon_r  (a,b) \langle a_r b_r \rangle +\sum_{(k,p,q)\in T_\omega (a,b) }\,    \varepsilon(\omega, k)   \,\varepsilon_p(a)  \,\varepsilon_q(b) \langle a_pb_q \rangle. \end{equation}
\end{lemma}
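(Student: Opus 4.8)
The plan is to lift the proof of Lemma~\ref{1aeee} from homology to homotopy, replacing each signed count of an intersection point by the corresponding signed free homotopy class of a grafted loop. Since the left-hand side $[x,y]_{X,\omega}$ is defined by~\eqref{deff} only for an $\omega$-admissible pair, the first step is to produce such a pair out of the given generic pair $a,b$ and to read off its contribution to the bracket.

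Following the proof of Lemma~\ref{1aeee}, I would replace $a$ by the loop $a'$ obtained by pushing the branches of $a$ crossing each gate to the $\omega$-left of the branches of $b$ crossing that gate, so that $a',b$ is $\omega$-admissible and freely homotopic to $a,b$. As recorded there, the intersection set $a'\cap b$ consists of the old points $r\in a\cap b$ (unaffected, since the pushing is supported near the gates) together with exactly one new point $r=r(k,p,q)\in\Sigma'$ for each triple $(k,p,q)\in T_\omega(a,b)$, and the signs are $\varepsilon_r(a',b)=\varepsilon_r(a,b)$ for the old points and $\varepsilon_r(a',b)=\varepsilon(\omega,k)\,\varepsilon_p(a)\,\varepsilon_q(b)$ for the new ones. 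Applying~\eqref{deff} to $a',b$ thus expresses $[x,y]_{X,\omega}$ as a sum over these two families, and it remains only to identify the free homotopy class $\langle a'_r b_r\rangle$ attached to each term.

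The heart of the argument is this homotopy identification. For an old intersection $r\in a\cap b$, the homotopy from $a$ to $a'$ is supported away from $r$, so $a'_r$ is freely homotopic to $a_r$ while $b_r$ is literally $b_r$; hence $\langle a'_r b_r\rangle=\langle a_r b_r\rangle$, reproducing the first sum in~\eqref{chorieee++}. For a new intersection $r=r(k,p,q)$, I would work in a small disk neighborhood of the arc of $\alpha_k$ between $p$ and $q$: there the based loop $a'_r$ starts at $r$, runs along $a'\simeq a$ and returns, so up to a short path inside this disk it is the loop $a_p$; likewise $b_r$ is, up to a short path, the loop $b_q$. Concatenating, the product $a'_r b_r$ based at $r$ is freely homotopic to the grafting of $a_p$ and $b_q$ along a path joining $p$ and $q$ inside $\alpha_k$, which is precisely $\langle a_p b_q\rangle$; here I would invoke the already-noted independence of $\langle a_p b_q\rangle$ on the choice of connecting path in $\alpha_k$. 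Substituting these identifications into the two sums yields~\eqref{chorieee++}.

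The main obstacle is the last identification $\langle a'_r b_r\rangle=\langle a_p b_q\rangle$ at the new intersection points: one must check that pushing $a$ across the branch of $b$ creates the grafted loop with the correct based structure, and that the auxiliary short paths used to read off $a_p$ and $b_q$ can be absorbed into a single path lying in $\alpha_k$, so that no extra winding is introduced near the gate. This is a purely local picture near one gate, but it is where the homotopy content absent from the homological Lemma~\ref{1aeee} actually enters, and it must be checked uniformly when several branches of $a$ and of $b$ meet the same gate.
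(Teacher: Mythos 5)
Your proposal is correct and follows exactly the route the paper intends: the paper's own proof of Lemma~\ref{1aeee++} is the one-line remark that it ``repeats the proof of Lemma~\ref{1aeee} with obvious modifications,'' and the modifications you spell out --- keeping $\langle a_r b_r\rangle$ at the old intersections and identifying $\langle a'_r b_r\rangle$ with $\langle a_p b_q\rangle$ at each new intersection $r(k,p,q)$ via a short connecting path absorbed into $\alpha_k$ --- are precisely those intended. Your explicit verification of the homotopy identification at the new crossings is a sound (and more detailed) account of the step the paper leaves implicit.
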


The proof repeats the proof of Lemma~\ref{1aeee} with   obvious modifications.  If $a\cap b=\emptyset$, then  \eqref{chorieee++} simplifies to   
\begin{equation}\label{chorieee++vj}
[x,y]_{X,\omega}   =  \sum_{(k,p,q) \in T_\omega (a,b) }\,    \varepsilon(\omega, k)   \,\varepsilon_p(a)  \,\varepsilon_q(b) \langle a_pb_q \rangle. \end{equation}

   \begin{theor}\label{1aee++++}  The skew-symmetric bracket $[-, -]_X$ in~$M$ defined by
$$
[x,y]_X  =   
 [x,y]_{X,\omega}  -  [y,x]_{X,\omega}  $$
 for all $x,y \in M$ and a gate orientation~$\omega$ of~$X$  does not depend on~$\omega$.
 \end{theor}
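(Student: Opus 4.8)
The plan is to imitate the proof of Theorem~\ref{1aee}, reducing everything to the effect of reversing a single gate. For $l\in\pi_0$ let $l\omega$ denote the gate orientation obtained from $\omega$ by reversing $\alpha_l$; since any two gate orientations differ by reversing the directions of a finite subset of the gates $\{\alpha_k\}_{k\in\pi_0}$, it suffices to show that $[-,-]_X$ is unchanged when $\omega$ is replaced by $l\omega$. I would fix $x,y\in\mathcal L$, represent them by a single $\omega$-admissible pair $a,b$, and compute all four quantities $[x,y]_{X,\omega}$, $[x,y]_{X,l\omega}$, $[y,x]_{X,\omega}$, $[y,x]_{X,l\omega}$ from this one pair: the first directly from the definition~\eqref{deff}, and the other three from Lemma~\ref{1aeee++} (the ordered pair $(b,a)$ is still generic, so the lemma applies to it as well). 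The $\omega$-independent diagonal contribution $\sum_{r\in a\cap b}\varepsilon_r(a,b)\langle a_rb_r\rangle$ then drops out of each difference and plays no further role.

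Because $a,b$ is $\omega$-admissible, the set $T_\omega(a,b)$ is empty, whereas reversing $\alpha_l$ makes $T_{l\omega}(a,b)$ consist exactly of the triples $(l,p,q)$ with $p\in a\cap\alpha_l$ and $q\in b\cap\alpha_l$; together with $\varepsilon(l\omega,l)=-\varepsilon(\omega,l)$, formula~\eqref{chorieee++} then gives
\[ [x,y]_{X,l\omega}=[x,y]_{X,\omega}-\varepsilon(\omega,l)\sum_{p\in a\cap\alpha_l,\,q\in b\cap\alpha_l}\varepsilon_p(a)\,\varepsilon_q(b)\,\langle a_pb_q\rangle. \]
An identical analysis applied to the ordered pair $(b,a)$ — generic though not $\omega$-admissible — shows that passing from $\omega$ to $l\omega$ changes $[y,x]$ by the companion correction
\[ [y,x]_{X,l\omega}=[y,x]_{X,\omega}-\varepsilon(\omega,l)\sum_{p\in a\cap\alpha_l,\,q\in b\cap\alpha_l}\varepsilon_p(a)\,\varepsilon_q(b)\,\langle b_qa_p\rangle, \]
the sole difference being that the grafted class is now $\langle b_qa_p\rangle$ rather than $\langle a_pb_q\rangle$.

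Subtracting the two displays gives
\[ [x,y]_{X,l\omega}-[y,x]_{X,l\omega}=\big([x,y]_{X,\omega}-[y,x]_{X,\omega}\big)-\varepsilon(\omega,l)\sum_{p,q}\varepsilon_p(a)\,\varepsilon_q(b)\big(\langle a_pb_q\rangle-\langle b_qa_p\rangle\big). \]
The decisive point is that this correction sum vanishes term by term because $\langle a_pb_q\rangle=\langle b_qa_p\rangle$ in $\mathcal L$: the loops $a_pb_q$ and $b_qa_p$ are cyclic permutations of one another (the grafting path in $\alpha_l$ is absorbed into a conjugation), hence freely homotopic. This is exactly where the argument parts company with the homological Theorem~\ref{1aee}, and where I expect the only real work to lie: there the correction $\varepsilon(\omega,l)\,v_l(x)\,v_l(y)$ is a manifestly symmetric integer, whereas here it is an element of $M$ whose symmetry is not formal but rests on the conjugacy identity $\langle a_pb_q\rangle=\langle b_qa_p\rangle$. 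Granting this identity, $[x,y]_{X,l\omega}-[y,x]_{X,l\omega}=[x,y]_{X,\omega}-[y,x]_{X,\omega}$, so $[-,-]_X$ is invariant under the reversal of a single gate and therefore independent of $\omega$.
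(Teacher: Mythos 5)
Your proposal is correct and follows essentially the same route as the paper: reduce to reversing a single gate $\alpha_l$, compute the resulting correction to $[x,y]_{X,\omega}$ from an $\omega$-admissible pair via Lemma~\ref{1aeee++}, and cancel it against the correction to $[y,x]_{X,\omega}$ using the conjugacy identity $\langle a_pb_q\rangle=\langle b_qa_p\rangle$. The only cosmetic difference is that the paper packages this correction as the symmetric pairing $\mu_l(x,y)$ of Section~\ref{p44}, whereas you carry out the term-by-term cancellation inline from a single chosen pair of loops.
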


We  prove this theorem in Section~\ref{p45} using the content of Section~\ref{p44}.
 We call   the bracket $[-,-]_X$  the \emph{homotopy intersection  bracket of~$X$}. Both   brackets $[-,-]_{X,\omega}$ and $[-,-]_{X}$    generalize Goldman's  bracket (\cite{Go1}, \cite{Go2}): 
 the value of   $[-,-]_{X,\omega}$  (respectively,  $[-,-]_{X}$)  on any pair of  free homotopy  classes  of loops in $\Sigma'\subset X$    is equal to  their  Goldman's    bracket    (respectively,  twice this bracket).    The free homotopy  classes  of loops lying in $Y\subset X$  annihilate  both $[-,-]_{X,\omega}$ and $[-,-]_{X}$.

\subsection{The pairing $\mu_k$}\label{p44} For each $k\in \pi_0$, we define a bilinear form $\mu_k:M\times M\to M$ as follows. It suffices to define $\mu_k(x,y)$ for all $x,y \in  \mathcal L$. To this end,     pick  generic  loops $a,b$ representing $x,y$, and set
$$\mu_k (x,y)=\sum_{p\in a \cap \alpha_k ,q \in b \cap \alpha_k}\,    \varepsilon_p(a)  \,\varepsilon_q(b) \langle a_pb_q \rangle \in M. $$

\begin{lemma}\label{1aeee+cvb+}   The vector $\mu_k (x,y)  \in M$ does not depend on the choice of generic  loops $a,b$ representing $x,y$.
\end{lemma}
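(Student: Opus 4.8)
The plan is to use the move calculus from Section~\ref{moves}: two generic loops representing the same free homotopy class are related by a finite sequence of loop moves $L_0$--$L_5$ and their inverses. Since $\mu_k$ is defined to be bilinear, it suffices to show that $\mu_k(x,y)$ is unchanged when the representative $a$ of $x$ is altered by a single loop move while $b$ is held fixed; the case of a move on $b$ is entirely symmetric (one may also invoke \eqref{chorisdfbulle}-style symmetry). So the whole argument reduces to inspecting the six moves one at a time and tracking the three ingredients that enter $\mu_k$: the gate-crossing set $a\cap\alpha_k$, the signs $\varepsilon_p(a)$, and the free homotopy classes $\langle a_p b_q\rangle$.

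First I would dispatch the moves that do not interact with the gate. The moves $L_0$, $L_1$, $L_2$, $L_3$ are performed inside $\Int(\Sigma')$ and leave the set $a\cap\alpha_k$ together with all signs $\varepsilon_p(a)$ untouched; moreover each is a homotopy of $a$, so for every pair $(p,q)$ the based loop $a_p$ is deformed by a homotopy and the grafted free homotopy class $\langle a_p b_q\rangle$ is preserved. (For $L_0$ a crossing may slide along $\alpha_k$, but its sign and the class $\langle a_p b_q\rangle$ vary continuously, hence are constant.) The move $L_5$ pushes a double point across $\alpha_k$ while keeping $\card(a\cap\alpha_k)$ fixed: the two relevant gate crossings persist with their signs throughout, the self-intersection that exits $\Sigma'$ is irrelevant to $\mu_k$, and since $L_5$ is again a homotopy of $a$, every $\langle a_p b_q\rangle$ is preserved. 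Thus $\mu_k$ is invariant under $L_0,L_1,L_2,L_3,L_5$ for essentially formal reasons.

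The only substantial case, and the one I expect to be the main obstacle, is $L_4$, which pushes a finger of $a$ across $\alpha_k$ and creates two new gate crossings $p_1,p_2$ while leaving the old crossings (and their classes) intact by homotopy. The two new crossings carry opposite signs, $\varepsilon_{p_1}(a)=-\varepsilon_{p_2}(a)$, since the finger enters $X\setminus\Sigma'$ at one foot and returns at the other. Hence the net new contribution to $\mu_k$ is $\varepsilon_{p_1}(a)\sum_{q\in b\cap\alpha_k}\varepsilon_q(b)\big(\langle a_{p_1}b_q\rangle-\langle a_{p_2}b_q\rangle\big)$, and the claim is equivalent to the identity $\langle a_{p_1}b_q\rangle=\langle a_{p_2}b_q\rangle$ for every $q$. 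To prove this I would use that the pushed finger arc $\gamma$ of $a$ from $p_1$ to $p_2$ bounds, together with the subsegment of $\alpha_k$ between its feet, a disk in $X$; therefore $\gamma$ is homotopic rel endpoints to a path $d\subset\alpha_k$ from $p_1$ to $p_2$. Consequently $a_{p_1}$ is homotopic, as a based loop at $p_1$, to $d\,a_{p_2}\,d^{-1}$, so after grafting with $b_q$ along a gate path $c$ from $p_1$ to $q$ and passing to free homotopy classes (which are conjugation invariant) one gets $\langle a_{p_1}b_q\rangle=\langle a_{p_2}(d^{-1}c)\,b_q\,(d^{-1}c)^{-1}\rangle$; since $d^{-1}c$ is a path from $p_2$ to $q$ lying in $\alpha_k$, this is precisely a legitimate representative of $\langle a_{p_2}b_q\rangle$.

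The hard part is therefore entirely contained in this last homotopy bookkeeping for $L_4$: identifying the finger with a gate path and checking that the resulting change of basepoint and grafting path stays inside $\alpha_k$ so that the two new terms cancel. Once the cancellation for $L_4$ is established, invariance of $\mu_k$ under all loop moves follows, and with it the independence of $\mu_k(x,y)$ from the choice of generic representatives $a,b$, proving the lemma.
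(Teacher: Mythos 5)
Your proof follows essentially the same route as the paper: reduce to invariance under the loop moves $L_0$--$L_5$, dispose of $L_0$--$L_3$ and $L_5$ as formal, and establish the sign-cancellation of the two new gate crossings created by $L_4$. The only difference is that you spell out why $\langle a_{p_1}b_q\rangle=\langle a_{p_2}b_q\rangle$ (the finger arc is homotopic rel endpoints to a subarc of the gate, so the change of basepoint is absorbed into the grafting path in $\alpha_k$), a step the paper's proof asserts without comment; this is a correct and welcome elaboration, not a different argument.
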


\begin{proof} We  need only  to prove that $\mu_k (x,y) $ is preserved under the  loop moves on $a,b$.
For the moves $L_0 - L_3$, this is clear from the   definitions. A move $L_4$ on~$a$ creates two additional   points $p', p'' \in a\cap  \alpha_k$ for some~$k$  such that $\varepsilon_{p'}(a) =
-\varepsilon_{p''}(a) $. Then the expressions $\varepsilon_{p'}(a)  \,\varepsilon_q(b) \langle a_{p'}b_q \rangle$ and $\varepsilon_{p''}(a)  \,\varepsilon_q(b) \langle a_{p''}b_q \rangle$ cancel each other for all $q \in b \cap \alpha_k$. So, this move preserves  $\mu_k (x,y)$.  The move $L_5$ on~$a$ replaces two points $p_1, p_2 \in a \cap \alpha_k$ by two points $p'_1, p'_2$ such that $\varepsilon_{p'_i}(a)=\varepsilon_{p_i}(a)$ and $\langle a_{p'_i} b_q \rangle =\langle a_{p_i}b_q \rangle $ for $i=1,2$ and any $q \in b \cap \alpha_k$. 
So, this move preserves  $\mu_k (x,y)$. Similar computations show  that   the moves $L_4, L_5$ on~$b$ preserve  $\mu_k (x,y)  $. 
\end{proof}

Lemma \ref{1aeee+cvb+} shows   that $\mu_k (x,y)$ depends only on $x,y$. The  identity $\langle a_pb_q \rangle=\langle b_q a_p \rangle$ implies that  $\mu_k(x,y)=\mu_k(y,x)$ for all $x,y \in M$.

\subsection{Proof of Theorem~\ref{1aee++++}}\label{p45} It suffices to prove that  \begin{equation}\label{cvbbbnhoribulle} [x,y]_{X, {l}\omega}  -  [y,x]_{X,{l}\omega}  =[x,y]_{X,\omega}  -  [y,x]_{X,\omega} \end{equation}
for all $x,y\in \mathcal L$ and ${l}\in \pi_0$.     Pick an $\omega$-admissible  pair of loops $a,b$ representing respectively $x,y$. We  compute $ [x,y]_{X, \omega}$ from  \eqref{deff}   and compute $ [x,y]_{X, {l}\omega} $ applying~\eqref{chorieee++} (with $\omega$ replaced by ${l}\omega$) to the  pair $a,b$.   The same argument as in the proof of Theorem~\ref{1aee} shows that 
\begin{equation}\label{tyr} [x,y]_{X, {l}\omega} -  [x,y]_{X, \omega}    =
  \sum_{p\in a \cap \alpha_{{l}} ,q\in b \cap \alpha_{{l}}}\, \varepsilon({l}\omega, {l})\,  \varepsilon_p(a) \,  \varepsilon_q(b) \langle a_pb_q \rangle \end{equation}
 $$= 
 - \varepsilon(\omega, {l}) \,  \mu_{{l}} (x,y).$$
Since the form $ \mu_{{l}}$ is symmetric,   the expression $[x,y]_{X, {l}\omega}  -   [x,y]_{X,\omega}$
is symmetric in $x, y$.  This  implies   \eqref{cvbbbnhoribulle}  and completes the proof  of the theorem.

\subsection{Computation of $[-,-]_X$}\label{The edbdbdforccm bullet}   To compute $[x,y]_X$ for  $x,y\in \mathcal L$ we will use a method parallel to the one   in Section~\ref{The edbdbdform bullet}. Pick a generic pair of loops $a,b$ in~$X$ representing $x,y$. Let   $\omega_0$ be the  orientation of the gates induced by  the orientation of $\Sigma'\subset \Sigma$.  Lemma~\ref{1aeee++}  implies that 
  $$
[x,y]_X = [x, y]_{X, \omega_0}    - [y,x]_{X,\omega_0}     $$
$$ = 2 \sum_{r\in a\cap b} \varepsilon_r  (a,b) \langle a_r b_r \rangle    +\sum_{(k,p,q) \in T(a,b) }\,   \delta  (p,q)    \,\varepsilon_p(a)  \,\varepsilon_q(b) \langle a_pb_q \rangle $$
where $ \delta  (p,q)=\pm 1$ is defined     in Section~\ref{The edbdbdform bullet}.     If $a\cap b =\emptyset$, then
\begin{equation}\label{chorvvieee++veryryrsss}
[x,y]_X =  \sum_{(k,p,q) \in T(a,b) }\,   \delta  (p,q)    \,\varepsilon_p(a)  \,\varepsilon_q(b)  \langle a_pb_q \rangle  . \end{equation}

  \subsection{Remarks}\label{The dbdbdform bullet}   
 
 1. Applying \eqref{tyr} consecutively  to all $l\in \pi_0$ and using~\eqref{chorisdfbulle}, we obtain that for any $x,y\in M$ and any gate orientation~$\omega$ of~$X$,  
$$
[x,y]_{X, \omega} +  [y,x]_{X, \omega} =\sum_{l\in \pi_0}  \varepsilon(\omega, l) \,  \mu_{l}(x,y).  $$
Since $[x,y]_X=[x , y]_{X, \omega} -[y,x ]_{X, \omega} $, we deduce that 
$$
2 [x , y]_{X, \omega} = [x,y]_X +\sum_{l\in \pi_0}  \varepsilon(\omega, l) \,  \mu_{l}(x,y)  .$$
As a consequence,  the   form $\bullet_{X,\omega}$ in $H_1(X)$ may be computed from    $\bullet_{X}$  via 
$$2\,x \bullet_{X, \omega} y    =x  \bullet_{X} y +\sum_{l\in \pi_0}  \varepsilon(\omega, l) \,  v_l(x) \, v_l(y)$$
for all $x,y \in H_1(X)$. 

2. Generally speaking, the brackets $[-,-]_{X, \omega}$ and $[-,-]_X$ do not satisfy the Jacobi identity. Their Jacobiators can be computed in terms of operations associated with the gates of~$X$. Similar results hold for the self-intersection cobrackets defined in the next section;  this will be discussed  in more detail elsewhere.

   \section{The cobrackets}
   
   We define self-intersection cobrackets for loops in the quasi-surface~$X$. 
    
     \subsection{The cobracket $\nu_{X,\omega}$}\label{cobob} 
 For a loop~$a$ in~$X$, we set $\langle a \rangle_0=\langle a \rangle \in   M=M(X)$  if~$a$ is non-contractible and $\langle a \rangle_0=0\in M$ if~$a$ is contractible.   
A generic loop~$a$ crosses each point  $r\in \# a$ twice; we let $v^1_r, v^2_r$ be the tangent vectors of~$a$ at~$r$ numerated so    that the pair $(v^1_r, v^2_r)$ is positively oriented. For $i=1,2$,   let $a^i_r$ be the loop   starting in~$r$ and going  along~$a$   in the direction of the vector $v^i_r$    until the first return to~$r$. Up to parametrization, $a=a^1_ra^2_r$ is the product of the  loops $a^1_r, a^2_r$ based at~$r$.  
Also, for any $k\in \pi_0$ and  any  distinct points
$p_1, p_2 \in a \cap \alpha_k$ we define a loop $a_{p_1, p_2}$ in~$X$ which goes from $p_1$  to $p_2$ along~$a$ and   then goes back  to~$p_1 $  along the gate~$\alpha_k$. 
  Consider the set of ordered triples $$T(a)= \{ (k\in \pi_0,p_1 \in a \cap \alpha_k , p_2 \in a \cap \alpha_k) \,  \vert \,
p_1 \neq p_2\}.$$    We  call  a   triple   $ (k,p_1, p_2) \in  T(a) $ a  \emph{chord of~$a$ with endpoints} $p_1, p_2$.

For a gate orientation~$\omega$ of~$X$, we  let $T_\omega(a)  $ be the set of   chords $(k,p_1, p_2)\in  T(a)   $  such that $p_1<_\omega p_2$. Set $$\nu_{X,\omega}(a)=\sum_{r\in \# a} (\langle a^1_r \rangle_0 \otimes \langle a^2_r \rangle_0  -\langle a^2_r \rangle_0  \otimes \langle a^1_r \rangle_0)$$
$$
 +\sum_{(k,p_1, p_2)\in T_\omega(a)} \, \varepsilon(\omega, k) \,  \varepsilon_{p_1}(a) \, \varepsilon_{p_2}(a) \,    \langle  a_{{p_2},{p_1}}\rangle_0 \otimes \langle  a_{{p_1},{p_2}} \rangle_0 \in  M  \otimes M.$$

\begin{lemma}\label{svvvssvtrbucte}  The cobracket  $\nu_{X,\omega}(a)$   is preserved under all loop moves on~$a$.
\end{lemma}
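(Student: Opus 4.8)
The plan is to verify that each summand of
$$\nu_{X,\omega}(a)=\sum_{r\in\#a}\big(\langle a^1_r\rangle_0\otimes\langle a^2_r\rangle_0-\langle a^2_r\rangle_0\otimes\langle a^1_r\rangle_0\big)+\sum_{(k,p_1,p_2)\in T_\omega(a)}\varepsilon(\omega,k)\,\varepsilon_{p_1}(a)\,\varepsilon_{p_2}(a)\,\langle a_{p_2,p_1}\rangle_0\otimes\langle a_{p_1,p_2}\rangle_0$$
--- the \emph{self-intersection sum} over $\#a$ and the \emph{gate sum} over $T_\omega(a)$ --- is controlled under each of the six moves $L_0$--$L_5$, treated one at a time. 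Two elementary facts are used throughout. First, every loop move is a homotopy of $a$ supported in a small disk, lying either in $\Int(\Sigma')$ (for $L_0$--$L_3$) or in a cylinder neighborhood of a single gate $\alpha_k$ (for $L_4,L_5$); so any half-loop $a^i_r$ or chord loop $a_{p_1,p_2}$ that avoids this disk keeps its free homotopy class. Second, when a sub-loop does meet the disk, the move changes it only by a homotopy --- the insertion of a null-homotopic cap (bounding a disk in $\Int(\Sigma')$ or across $\alpha_k$ in the collar) or a push of an arc across the disk --- so its class $\langle\cdot\rangle_0$ is preserved, the sole exception being when the sub-loop is the cap itself, in which case the class is $0$.

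There is nothing to prove for $L_0$. The moves $L_1$--$L_3$ are supported in $\Int(\Sigma')$ and change neither the gate crossings nor, by the facts above, the gate sum; their effect on the self-intersection sum is exactly the computation showing that $\nu_\Gamma$ is well defined: $L_1$ creates a double point one of whose half-loops is a contractible curl, so its term vanishes; $L_2$ creates two double points whose self-intersection contributions are negatives of one another; and $L_3$ permutes three double points whose half-loop classes match term by term. For $L_4$ the self-intersection sum is untouched (no double point is created in $\Int(\Sigma')$), and the two new gate crossings $p',p''$ on $\alpha_k$ satisfy $\varepsilon_{p'}(a)=-\varepsilon_{p''}(a)$ and occupy the same $\omega$-position relative to every other crossing. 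Because the arc joining them is a null-homotopic cap, $\langle a_{p',q}\rangle_0=\langle a_{p'',q}\rangle_0$ and $\langle a_{q,p'}\rangle_0=\langle a_{q,p''}\rangle_0$ for every other crossing $q$, so the chords through $p'$ and through $p''$ enter the gate sum in canceling pairs; the lone chord joining $p'$ to $p''$ contributes $0$ since one of its two chord loops is the cap. All other chords retain their classes, signs and $\omega$-order, so the gate sum, and with it $\nu_{X,\omega}(a)$, is preserved.

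The move $L_5$ is the heart of the matter, for here a self-intersection term is genuinely deleted from the first sum and must reappear in the second. In the local model a double point $r\in\#a$ and a segment of $\alpha_k$ cobound a triangle inside $\Sigma'$ whose other two sides are arcs of $a$ meeting $\alpha_k$ at the two gate crossings $p_1,p_2$; the move slides $r$ across $\alpha_k$, removing it from $\#a$ while preserving $p_1,p_2$ and $\card(a\cap\alpha_k)$. Collapsing this triangle (a null-homotopic cap) identifies the chord loops $a_{p_1,p_2},a_{p_2,p_1}$ with the half-loops $a^1_r,a^2_r$, the identification being the same before and after the move. The decisive geometric observation is that pushing the crossing through $\alpha_k$ reverses the $\omega$-order of $p_1$ and $p_2$: the strand meeting $\alpha_k$ on the $\omega$-left before the move meets it on the $\omega$-right afterwards. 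Thus the unique monomial that $T_\omega(a)$ contributes for this chord has its two tensor factors transposed by the move. A bookkeeping of the signs $\varepsilon_{p_1}(a),\varepsilon_{p_2}(a),\varepsilon(\omega,k)$ and of the orientation convention fixing $(v^1_r,v^2_r)$ then shows that the difference of this gate term (after minus before) equals $\langle a^1_r\rangle_0\otimes\langle a^2_r\rangle_0-\langle a^2_r\rangle_0\otimes\langle a^1_r\rangle_0$, precisely the term lost from the first sum. Hence $\nu_{X,\omega}(a)$ is unchanged, and the inverse moves follow by symmetry.

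\textbf{Main obstacle.} The real work is concentrated in $L_5$: one must check that the $\omega$-order reversal of the two gate crossings, combined with the crossing signs $\varepsilon_{p_i}(a)$, the factor $\varepsilon(\omega,k)$ and the convention fixing $(v^1_r,v^2_r)$, conspires to reproduce the lost antisymmetrized self-intersection term exactly --- and for each of the two possible local orientations of the two strands. The remaining moves are comparatively routine: $L_4$ is the cobracket analogue of the gate cancellation already established for $\mu_k$ in Lemma~\ref{1aeee+cvb+}, and the invariance of the self-intersection sum under $L_1$--$L_3$ is the classical argument that $\nu_\Gamma$ is well defined.
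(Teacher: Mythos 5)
Your proposal is correct and follows essentially the same route as the paper: $L_0$--$L_3$ are handled as in the surface case, $L_4$ by pairwise cancellation of chords through the two new gate crossings plus vanishing of the chord joining them, and $L_5$ by matching the lost self-intersection term against the change in the contribution of the chord formed by the two persisting gate crossings. Your framing of the $L_5$ step via the reversal of the $\omega$-order of the two crossings (hence transposition of the tensor factors) is a slightly more conceptual packaging of the paper's explicit case analysis, which instead first reduces to $\varepsilon(\omega,l)=+1$ by an orientation-reversal symmetry and then computes the joint contribution of $r_0$ and the chord before and after the move in the two cases $\varepsilon_{q_1}(a)=\pm\varepsilon_{q_2}(a)$; the sign bookkeeping you defer does check out exactly as you assert.
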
  
  
\begin{proof} The moves $L_0-L_3$ proceed in $\Sigma'$ and are treated as in \cite{Tu1}. The move $L_4$ pushes  a  branch of~$a$    across a gate $\alpha_{{l}}$ for some ${l} \in \pi_0$ creating a loop $a'$ which has  two additional  crossings ${q_1},{q_2} \in a' \cap \alpha_{{l}}$ such that $q_1<_\omega q_2$ and $\varepsilon_{q_1} (a)=-\varepsilon_{q_2}(a)$.  The   contributions to the cobracket  of the self-intersection points   and  of   the  chords  containing neither~$q_1$ nor~$q_2$  are the same before and after the deformation. The   contributions to $\nu_{X,\omega}(a')$ of the chords  containing exactly one of the points ${q_1},{q_2}$ cancel each other.   The   chord      $({l},{q_1},{q_2})$ of~$a'$   contributes zero to  $\nu_{X,\omega}(a')$  because at least  one of the    loops $a_{{q_1},{q_2}}$ and $ a_{q_2, q_1}$ is contractible.   Therefore
 $\nu_{X,\omega}(a)=\nu_{X,\omega}(a')$. We now prove the invariance   of $\nu_{X,\omega}(a)$ under the move $L_5$ which  pushes a self-crossing of~$a$ in $\Sigma'$ across a gate, say $\alpha_{{l}}$, into $X\setminus \Sigma'$.  Note   that under  the inversion of the orientation of~$\Sigma$,  the signs $\varepsilon( \omega, k) $ and the expression  $\nu_{X,\omega}(a)$ are  multiplied by~$-1$. Therefore, inverting if necessary  the given  orientation of~$\Sigma$, we can reduce the proof of the invariance of $\nu_{X,\omega}(a)$ under our move to the case where $\varepsilon( \omega, {l})=+1$.  Assume that the  move changes~$a$ in a small disk~$D$ by  pushing  a self-crossing  $r_0 \in \#a$   from $  D \cap \Sigma'$   to $D \setminus \Sigma'$. For $i=1,2$ set $\gamma_i=\langle a^i_{r_0}\rangle_0 \in M$. The branches of~$a$ meeting at $r_0$ intersect the gate $\alpha_{{l}}$ in two points lying in~$D$. We label these points $q_1,q_2$ so that $q_1<_\omega q_2$. 
 By definition,   $r_0$ contributes $\gamma_1\otimes \gamma_2  - \gamma_2\otimes \gamma_1 $  to $\nu_{X,\omega}(a)$.  The contribution of the chord $({l}, q_1,q_2)$  to $\nu_{X,\omega}(a)$ also  can be computed   from the definitions: it is equal to  $\gamma_2\otimes \gamma_1 $ if $\varepsilon_{q_1}(a)= \varepsilon_{q_2}(a)$ and  to  $- \gamma_1\otimes \gamma_2 $ otherwise. Thus the joint contribution of $r_0$ and  $({l}, q_1,q_2)$  to $\nu_{X,\omega}(a)$ is equal to 
  $\gamma_1\otimes \gamma_2 $ if $\varepsilon_{q_1}(a)= \varepsilon_{q_2}(a)$ and   to  $- \gamma_2\otimes \gamma_1 $  otherwise. The loop, $a'$, produced by the move meets $D\cap \alpha_{{l}}$ in two points forming a chord of~$a'$.
A similar  computation shows that the contribution of this chord    to $\nu_{X,\omega}(a')$ also  is  
  $\gamma_1\otimes \gamma_2 $ if $\varepsilon_{q_1}(a)= \varepsilon_{q_2}(a)$ and     $- \gamma_2\otimes \gamma_1 $  otherwise. All the other self-crossings and chords contribute   the same expressions to  $\nu_{X,\omega}(a)$  and $\nu_{X,\omega}(a')$. Therefore $\nu_{X,\omega}(a)=\nu_{X,\omega}(a')$.  \end{proof}
 
Lemma \ref{svvvssvtrbucte}   implies that  $\nu_{X,\omega}(a) \in M\otimes M$ depends only on the free homotopy  class $\langle a \rangle$  of~$a$. 
 The    map $\langle a \rangle \mapsto \nu_{X,\omega}(a) : \mathcal L \to M^{\otimes 2}$     extends  uniquely  to a   linear map $ 
M\to M^{\otimes 2}$   denoted~$\nu_{X,\omega}$.  

%Clearly, $\nu_{X,\omega}$ carries the free homotopy class of contractible loops to~$0$.

\subsection{The cobracket~$\nu_X$}\label{kocncncnkoeeata}    
  We   define a  cobracket  $\nu_X$  independent of~$\omega$.

   \begin{theor}\label{cobr1aee++++}  Let~$P$ be
 the linear  automorphism  of  $M \otimes M $ carrying $x\otimes y$ to $y\otimes   x$   for all $x,y \in M$.   The skew-symmetric cobracket 
$$
\nu_X  =   
 \nu_{X,\omega} - P \nu_{X,\omega}: M \to M \otimes M $$ 
does not depend on the choice of~$\omega$.
 \end{theor}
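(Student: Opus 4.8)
The plan is to follow the template of the proofs of Theorems~\ref{1aee} and~\ref{1aee++++}. Any two gate orientations of~$X$ are related by a finite sequence of single-gate flips $\omega \mapsto l\omega$ (inverting the direction of $\alpha_l$ while fixing the other gates), so it suffices to show that $\nu_X = \nu_{X,\omega} - P\nu_{X,\omega}$ is unchanged when $\omega$ is replaced by $l\omega$, for every $l \in \pi_0$. The skew-symmetry $P\nu_X = -\nu_X$ is automatic from $P^2 = \id$, so the only real content is the independence of~$\omega$.

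First I would fix a generic loop~$a$ representing a class in $\mathcal L$ and compare $\nu_{X,\omega}(a)$ with $\nu_{X,l\omega}(a)$ term by term. By Lemma~\ref{svvvssvtrbucte} both expressions are homotopy invariants, so working with one representative suffices. The self-intersection contributions $\sum_{r \in \# a}(\langle a^1_r\rangle_0 \otimes \langle a^2_r\rangle_0 - \langle a^2_r\rangle_0 \otimes \langle a^1_r\rangle_0)$ involve no gate orientation and are identical on both sides. For a gate $k \neq l$, neither the order $<_\omega$ on $\alpha_k$ nor the sign $\varepsilon(\omega, k)$ is affected by the flip, so those chord contributions also coincide. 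Hence $\nu_{X,l\omega}(a) - \nu_{X,\omega}(a)$ is supported on the chords on the flipped gate $\alpha_l$. There the flip reverses the order, so each unordered pair $\{p,q\}$ of points of $a \cap \alpha_l$ contributes through the ordered chord $(l,p,q)$ to $T_\omega(a)$ and through $(l,q,p)$ to $T_{l\omega}(a)$, while $\varepsilon(l\omega, l) = -\varepsilon(\omega, l)$. Writing out these two terms, using that $\langle a_{p_2,p_1}\rangle_0 \otimes \langle a_{p_1,p_2}\rangle_0$ is the tensor attached to a chord $(l,p_1,p_2)$, I would obtain
\begin{equation*}
\nu_{X,l\omega}(a) - \nu_{X,\omega}(a) = -\varepsilon(\omega, l) \sum_{\{p,q\} \subset a\cap\alpha_l} \varepsilon_p(a)\,\varepsilon_q(a)\big(\langle a_{p,q}\rangle_0 \otimes \langle a_{q,p}\rangle_0 + \langle a_{q,p}\rangle_0 \otimes \langle a_{p,q}\rangle_0\big),
\end{equation*}
the sum running over unordered pairs of distinct points of $a \cap \alpha_l$.

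The crucial observation is then that each summand $\langle a_{p,q}\rangle_0 \otimes \langle a_{q,p}\rangle_0 + \langle a_{q,p}\rangle_0 \otimes \langle a_{p,q}\rangle_0$ is invariant under the flip $P$ of the two tensor factors, so the whole difference $\nu_{X,l\omega}(a) - \nu_{X,\omega}(a)$ is a $P$-symmetric element of $M \otimes M$. Since $\id - P$ annihilates every $P$-symmetric element,
\begin{equation*}
\nu_X(\langle a\rangle) = (\id - P)\nu_{X,\omega}(a)
\end{equation*}
takes the same value when $\omega$ is replaced by $l\omega$. As this holds for every generator $\langle a\rangle$ and every $l \in \pi_0$, the cobracket $\nu_X$ is independent of~$\omega$. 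I expect the main obstacle to be the bookkeeping at the flipped gate: one must verify that reversing $<_\omega$ on $\alpha_l$ exactly exchanges the two ordered chords $(l,p,q)$ and $(l,q,p)$, and that the sign change $\varepsilon(l\omega,l) = -\varepsilon(\omega,l)$ together with the swapped roles of $a_{p,q}$ and $a_{q,p}$ produces the manifestly $P$-\emph{symmetric} sum above rather than a $P$-skew one; this is precisely what makes the $\omega$-dependence die after applying $\id - P$.
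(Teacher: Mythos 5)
Your proposal is correct and follows essentially the same route as the paper: reduce to a single-gate flip $\omega\mapsto l\omega$, observe that the difference $\nu_{X,l\omega}(a)-\nu_{X,\omega}(a)$ is supported on chords at the flipped gate, and use $\varepsilon(l\omega,l)=-\varepsilon(\omega,l)$ together with the bijection $(l,p_1,p_2)\in T_{l\omega}(a)\Leftrightarrow(l,p_2,p_1)\in T_{\omega}(a)$ to exhibit that difference as a $P$-symmetric element killed by $\id-P$. The only cosmetic difference is that you index the gate-$l$ sum by unordered pairs while the paper keeps the ordered sum over $T_\omega(a)$; the computation is identical.
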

 
 \begin{proof} It suffices to prove that $ \nu_{X,\omega} - P \nu_{X,\omega}$  is preserved  when~$\omega$ is replaced with ${l}\omega$  for  ${l}\in \pi_0$. For any generic loop~$a$ in~$X$,  we have 
 $$\nu_{X, \omega}(a) -\nu_{X, {l} \omega}(a)=\varepsilon(\omega, {l}) \, \sum_{({l}, p_1, p_2)\in T_\omega(a)} \,   \varepsilon_{p_1}(a) \, \varepsilon_{p_2}(a) \,    \langle  a_{{p_2},{p_1}}\rangle_0 \otimes \langle  a_{{p_1},{p_2}} \rangle_0 $$
 $$-\, 
 \varepsilon({l} \omega, {l}) \, \sum_{({l}, p_1, p_2)\in T_{{l}\omega}(a)} \,     \varepsilon_{p_1}(a) \, \varepsilon_{p_2}(a) \,   \langle  a_{{p_2},{p_1}}\rangle_0 \otimes \langle  a_{{p_1},{p_2}} \rangle_0.$$
 Clearly,  $  \varepsilon({l} \omega, {l}) =- \varepsilon(\omega, {l})$. Also,    the inclusion  $({l}, p_1, p_2)\in  T_{{l}\omega} (a)   $ holds  if and only if $({l},p_2,p_1)\in  T_{\omega} (a)   $. Therefore
  $$\nu_{X, \omega}(a) -\nu_{X, {l} \omega}(a)=$$
  $$=\varepsilon(\omega, {l}) \, \sum_{({l}, p_1, p_2)\in T_\omega(a)} \,  \varepsilon_{p_1}(a) \, \varepsilon_{p_2}(a) \,  \big ( \langle  a_{{p_2},{p_1}}\rangle_0 \otimes \langle  a_{{p_1},{p_2}} \rangle_0 +\langle  a_{{p_1},{p_2}}\rangle_0 \otimes \langle  a_{{p_2},{p_1}} \rangle_0  \big ). $$
  The latter expression  is, obviously, invariant under the  transposition~$P$. So, 
  $$\nu_{X, \omega}(a) -\nu_{X, {l} \omega}(a)=P\nu_{X, \omega}(a) - P \nu_{X, {l} \omega}(a)$$
or, equivalently,  $$\nu_{X, \omega}(a) -P\nu_{X, \omega}(a) = \nu_{X, {l} \omega}(a)- P \nu_{X, {l} \omega}(a).$$
 \end{proof}
 
We call $\nu_X$   the \emph{self-intersection  cobracket}  of~$X$. Both  cobrackets  $\nu_{X,\omega}$ and $\nu_{X}$    generalize the
 cobracket $\nu$  defined for loops in surfaces in \cite{Tu1}: 
 the value of   $\nu_{X,\omega}$  (respectively,  $\nu_{X}$)  on any   free homotopy  class  of  loops in $\Sigma'\subset X$    is equal to  the value of~$\nu$ on this class  (respectively,  twice that value).    The   free homotopy  classes  of loops lying in $Y\subset X$  are annihilated  by both $\nu_{X,\omega}$ and $\nu_{X}$.

\subsection{Computation of $\nu_X$}\label{The edbdxxxbdforccm bullet}   To compute $\nu_X $ we  use a method parallel to the one  used  in Sections~\ref{The edbdbdform bullet} and~\ref{The edbdbdforccm bullet}. Namely, for any generic  loop~$a $ in~$X$, we have 
  $$
\nu_X(  \langle a  \rangle )   = 2 \sum_{r\in \# a} (\langle a^1_r \rangle_0 \otimes \langle a^2_r \rangle_0  -\langle a^2_r \rangle_0  \otimes \langle a^1_r \rangle_0)$$
$$
+ \sum_{(k,p_1,p_2)\in T (a)} \, \delta(p_1,p_2) \,    \varepsilon_{p_1}(a) \, \varepsilon_{p_2}(a) \,      \langle  a_{{p_1},{p_2}}\rangle_0 \otimes \langle  a_{{p_2},{p_1}} \rangle_0 . $$
      If $\# a =\emptyset$, then
\begin{equation}\label{choricvxvxeee++veryryrsss}
\nu_X(  \langle a  \rangle  )   =  
 \sum_{(k,p_1,p_2)\in T (a)}  \, \delta(p_1,p_2) \,     \varepsilon_{p_1}(a) \, \varepsilon_{p_2}(a) \,    \langle  a_{{p_1},{p_2}}\rangle_0 \otimes \langle  a_{{p_2},{p_1}} \rangle_0  . \end{equation}

 \subsection{Examples}\label{A vanishing case} We give  two examples where the  operations 
 $\bullet_X, [-,-]_X$, and $ \nu_X $  vanish. Set $I=[0,1]$ and  $\Sigma =I^2$ with an arbitrary orientation.
 
 1.   Let $\alpha=I \times \{0\} \subset \partial \Sigma$ and  $\Sigma'= I \times  [1/3, 1] \subset \Sigma$. Then~$X$ has only one gate $ I \times \{1/3\}$.  It is clear  that any loop in~$X$ can be deformed away from~$\Sigma'$. Therefore $\bullet_{X, \omega}=0, [-,-]_{X, \omega}=0$, and $ \nu_{X, \omega} =0$ for both gate orientations~$\omega$ of~$X$. 
 Consequently,    $\bullet_X=0$, $[-,-]_X=0$, and $\nu_X=0$. 
 
 2. Let  $\alpha=I \times \{0,1\} \subset \partial \Sigma$ and  $\Sigma'= I \times  [1/3, 2/3] \subset \Sigma$. Then~$X$ has two gates   $\alpha_1=I \times \{1/3\}$ and $\alpha_2=I \times \{2/3\}$. Let~$\omega$ be  the   orientation of $\alpha_1, \alpha_2$ induced by the orientation of~$I$   from~$0$ to~$1$.  Any pair of  free homotopy classes of loops  in~$X$ can be represented by   loops $a,b$ such that~$a$ meets~$  \Sigma'$ at several   segments $\{s\} \times[1/3, 2/3]$ with  $s\in [0,1/3]$ and~$b$ meets~$  \Sigma'$ at several   segments  $\{t\} \times[1/3, 2/3]$ with $t\in [2/3, 1]$. Then the pair $a,b$ is $\omega$-admissible and $a\cap b=\emptyset$. Hence,  $a
\bullet_{X, \omega} b =0$. Consequently, $\bullet_{X, \omega}=0$ and   $\bullet_X=0$. 
Similar  arguments  show that  $[-,-]_X=0$. 
Next,  any free homotopy class  of  loops  in~$X$ can be represented by a  generic loop  $a$ which meets~$  \Sigma'$ at  the 
   segments $\{s\} \times[1/3, 2/3]$ where~$s$ runs over a finite set   $S \subset  (0,1)$.
Clearly,  $\#a= \emptyset$. The set $T_\omega(a)$ consists of the  triples $(k\in \{1,2\},s_1,s_2)$   with  $s_1,s_2\in S$ and $s_1<s_2$. For any such $s_1,s_2$, the triples $(1,s_1,s_2)$ and $(2, s_1,s_2)$ contribute opposite values to $\nu_{X, \omega} (a)$ because  $\varepsilon(\omega, 1)= - \varepsilon(\omega, 2)$ while all  other terms of these  contributions are   the same. Thus,  $\nu_{X, \omega}(a)=0$. Consequently, $\nu_{X, \omega}=0$ and $\nu_X=0$. Note that for the  gate orientation~$\omega$ of~$X$ which directs  one    gate from~$0$ to~$1$ and   the other gate  from~$1$ to~$0$, the operations  $\bullet_{X, \omega}, [-,-]_{X, \omega}$, and $ \nu_{X, \omega} $ may be non-zero.

  \section{Transformations of quasi-surfaces}\label{Operations on quassi-surfaces}
       
We study    two transformations of the quasi-surface~$X$:  the  transformation~D (for disjoint unions) and the  transformation~C (for cuttings). Both~D and~C  preserve the underlying topological space of~$X$ but change the  structure of a quasi-surface.

    \subsection{The  transformation D}\label{modiffD}   The  transformation D  applies when $\Sigma=\sqcup_{j=1}^N \Sigma_j$  is a disjoint union of $N\geq 2$ oriented surfaces.  For each $j=1,..., N$, we let $Y_j$ be the topological space obtained by  glueing $N-1$  surfaces $\{\Sigma_i\}_{i\neq j}$ to~$Y$ along the   maps $\alpha\cap \partial \Sigma_i \to Y$ used in the definition of~$X$.  The  underlying  topological space of~$X$ is obtained by    glueing  $\Sigma_j$ to~$Y_j$ along the  map  $\alpha\cap \partial \Sigma_j \to Y \subset Y_j $ used in the definition of~$X$. This turns the   space in question   into   a quasi-surface,  $X_j$,   with surface core~$ \Sigma_j$ and singular core~$Y_j$.   For the reduced surface core   and the gates  of~$X_j$ we take  $\Sigma'\cap \Sigma_j$ and the gates of~$X$ lying  in $\Sigma_j$.

\begin{lemma}\label{ri1a}     We have 
\begin{equation}\label{choribnbn++}
\bullet_{X}   =  \sum_{j=1}^N   \bullet_{X_j}  : H_1(X) \times H_1(X) \to \ZZ,   \end{equation}
 \begin{equation}\label{choribnbn++br}
 [-,-]_{X}   =  \sum_{j=1}^N     [-,-]_{X_j}  : M\times M \to M,  \end{equation}
 \begin{equation}\label{choribnbn++nu}
\nu_{X}   =  \sum_{j=1}^N    \nu_{X_j}  : M \to M \otimes M.  \end{equation}
 \end{lemma}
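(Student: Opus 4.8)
The plan is to deduce all three identities from a single observation: the transformation~D alters neither the underlying topological space of~$X$ nor its reduced surface core, but merely redistributes the gates and the pieces of~$\Sigma'$ among $X_1,\dots,X_N$. Since $\Sigma=\sqcup_{j=1}^N \Sigma_j$, the reduced surface core decomposes as $\Sigma'=\sqcup_{j=1}^N \Sigma_j'$ with $\Sigma_j'=\Sigma'\cap \Sigma_j$, and the index set of gates splits as $\pi_0=\sqcup_{j=1}^N \pi_0(X_j)$, each gate $\alpha_k$ of~$X$ being a gate of exactly one~$X_j$, namely the one with $\alpha_k\subset \Sigma_j$. Because the underlying spaces of $X,X_1,\dots,X_N$ coincide, the groups $H_1(X)$ and $M=M(X)$, the classes $\langle c\rangle$ and $\langle c\rangle_0$ of auxiliary loops such as $a_rb_r$, $a_pb_q$, $a_{p_1,p_2}$ and $a^i_r$, and the property of contractibility are literally the same objects for~$X$ and for each~$X_j$.

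First I would record the local compatibility of the numerical data entering the definitions. Fix a gate orientation~$\omega$ of~$X$ and let $\omega_j$ be its restriction to the gates contained in~$\Sigma_j$, a gate orientation of~$X_j$. A neighborhood of~$\Sigma_j$ in~$X$, and in particular a neighborhood of each gate $\alpha_k\subset \Sigma_j$, is identified with the corresponding neighborhood in~$X_j$ with matching orientation of the reduced surface core. Consequently, for loops in these spaces the signs $\varepsilon_r(a,b)$ at points $r\in \Sigma_j'$, the gate signs $\varepsilon_p(a)$ at points~$p$ on gates of~$\Sigma_j$, the compatibility signs $\varepsilon(\omega,k)=\varepsilon(\omega_j,k)$, and the orders $<_\omega$ on the gates all agree between~$X$ and~$X_j$. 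It follows that a generic pair of loops in~$X$ is a generic pair in each~$X_j$, that an $\omega$-admissible pair in~$X$ is $\omega_j$-admissible in~$X_j$, and that a generic loop in~$X$ is generic in each~$X_j$.

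With these identifications in hand, each identity follows by splitting the defining sum according to the component containing the relevant intersection point, gate, or chord. For~\eqref{choribnbn++}, represent $x,y\in H_1(X)$ by an $\omega$-admissible pair $a,b$; using $a\cap b=\sqcup_j(a\cap b\cap \Sigma_j')$ and~\eqref{actionqq} we get
$$a\bullet_{X,\omega}b=\sum_{r\in a\cap b}\varepsilon_r(a,b)=\sum_{j=1}^N\ \sum_{r\in a\cap b\cap \Sigma_j'}\varepsilon_r(a,b)=\sum_{j=1}^N a\bullet_{X_j,\omega_j}b,$$
and antisymmetrizing in $x,y$ through Theorem~\ref{1aee} gives $\bullet_X=\sum_j\bullet_{X_j}$. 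The identity~\eqref{choribnbn++br} is obtained in the same way from~\eqref{deff}, now carrying the class $\langle a_rb_r\rangle$ along in each summand and antisymmetrizing through Theorem~\ref{1aee++++}. For~\eqref{choribnbn++nu}, represent $x\in M$ by a generic loop~$a$ and split the definition of $\nu_{X,\omega}$ from Section~\ref{cobob}: the self-intersection sum splits via $\#a=\sqcup_j(\#a\cap \Sigma_j')$ and the chord sum via $T_\omega(a)=\sqcup_j T_{\omega_j}(a)$ together with $\varepsilon(\omega,k)=\varepsilon(\omega_j,k)$, giving $\nu_{X,\omega}=\sum_j \nu_{X_j,\omega_j}$; applying $\id-P$ and invoking Theorem~\ref{cobr1aee++++} yields $\nu_X=\sum_j\nu_{X_j}$.

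The substantive point, and the only possible obstacle, is the structural compatibility asserted in the second paragraph: that transformation~D is a genuine re-bracketing of one and the same space, so that neighborhoods of~$\Sigma_j$ and of each of its gates look identical in~$X$ and in~$X_j$. Granting this, the three claims reduce to routine sum-splittings, and the sole verification of note is that genericity and $\omega$-admissibility are inherited upon restriction to a component---immediate, since these conditions are imposed only along~$\Sigma'$ and the gates.
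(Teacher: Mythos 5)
Your proposal is correct and follows essentially the same route as the paper: both reduce the three identities to the stronger $\omega$-level statements $\bullet_{X,\omega}=\sum_j\bullet_{X_j,\omega_j}$, $[-,-]_{X,\omega}=\sum_j[-,-]_{X_j,\omega_j}$, $\nu_{X,\omega}=\sum_j\nu_{X_j,\omega_j}$, proved by splitting the defining sums over intersection points, gates, and chords according to the component $\Sigma_j$ containing them. Your write-up merely spells out in more detail the compatibility of signs and admissibility that the paper leaves implicit.
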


\begin{proof} Note that each gate orientation~$\omega$ of~$X$ restricts to a gate orientation $\omega_j$ of $X_j$. Formulas \eqref{choribnbn++}--\eqref{choribnbn++nu}  are direct consequences of  the following stronger claim: for any~$\omega$,  we have
\begin{equation}\label{choribnbnomega}
 \bullet_{X, \omega}    = \sum_{j=1}^N   \bullet_{X_j, \omega_j}  : H_1(X) \times H_1(X) \to \ZZ. \end{equation}
   \begin{equation}\label{choribnbn++bromega}
 [-,-]_{X, \omega}   =  \sum_{j=1}^N     [-,-]_{X_j, \omega_j}  : M\times M \to M,  \end{equation}
 \begin{equation}\label{choribnbn++omega-}
\nu_{X, \omega}   =  \sum_{j=1}^N    \nu_{X_j, \omega_j}  : M \to M \otimes M.  \end{equation}
To prove \eqref{choribnbnomega}, we pick an $\omega$-admissible pair $a,b$ of loops in~$X$ representing $x,y\in H_1(X)$.  Then $ x \bullet_{X, \omega} y $ is the algebraic number of intersections of $a,b$ in $\Sigma'= \sqcup_j (\Sigma' \cap \Sigma_j)$. Also,  for all~$j$,  the  pair of loops  $a,b$ is    $\omega_j$-admissible in $X_j$  and  $x \bullet_{X_j, \omega_j} y  $
 is the algebraic number of intersections of  $a,b$    in  $\Sigma' \cap \Sigma_j$. Hence, 
 $$x \bullet_{X, \omega} y  = \sum_{j=1}^N   x \bullet_{X_j, \omega_j} y .$$
 The proofs of   \eqref{choribnbn++bromega},~\eqref{choribnbn++omega-} are similar.
\end{proof}

 \subsection{The  transformation C}\label{modiffC}  A  submanifold~$\beta$ of  a manifold $N$ is  said to be  \emph{proper} if $\beta \cap \partial N=\partial \beta$.  The  transformation
 C
  applies when   we are given  a  proper compact  1-dimensional submanifold~$\beta$ of~$ \Sigma'  $ whose     components       are   segments   disjoint from  the gates of~$X$ (which, recall, all lie in $\partial \Sigma'$). 
   Cutting $\Sigma \supset \Sigma'  \supset \beta$ along~$\beta $, we obtain an   oriented surface~$\Sigma^\beta$. A copy of the 1-manifold $\alpha \subset \partial \Sigma \setminus \Sigma'$   lies in  $\partial \Sigma^\beta$ and is denoted~$\alpha^\beta$. The 1-manifold~$\beta$ gives rise to two copies of itself  in $ \partial \Sigma^\beta \setminus \alpha^\beta$.  The  underlying topological space of~$X$  can be obtained by glueing  the surface   $\Sigma^\beta$ to   the disjoint union $Y^\beta=Y \sqcup   \beta$ along the map $\alpha^\beta=\alpha \to Y \subset Y^\beta$ used in the definition of~$X$ and along  the tautological identity maps of the copies of~$\beta $ in $\partial \Sigma^\beta$ to   
  $ \beta  \subset   Y^\beta $.  
  This turns the underlying topological space of~$X$  into a quasi-surface $X^\beta$ with  surface core  $\Sigma^\beta$ and singular core $Y^\beta$.

   \begin{lemma}\label{ri1a}    We have
   \begin{equation}\label{choribnbn++C}
\bullet_{X}   = \bullet_{X^\beta}  : H_1(X) \times H_1(X) \to \ZZ,   \end{equation}
 \begin{equation}\label{choribnbnr++C}
 [-,-]_{X}   =      [-,-]_{X^\beta}  : M\times M \to M,  \end{equation}
 \begin{equation}\label{choribnbnm++C}
\nu_{X}   =      \nu_{X^\beta}  : M \to M \otimes M.  
  \end{equation} \end{lemma}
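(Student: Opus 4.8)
The plan is to prove all three equalities at once by establishing the stronger $\omega$-refined identities
$$\bullet_{X^\beta,\tilde\omega}=\bullet_{X,\omega},\qquad [-,-]_{X^\beta,\tilde\omega}=[-,-]_{X,\omega},\qquad \nu_{X^\beta,\tilde\omega}=\nu_{X,\omega}$$
for a suitable gate orientation $\tilde\omega$ of $X^\beta$ extending a given gate orientation $\omega$ of $X$; the statements of the lemma then follow immediately from the $\omega$-independence Theorems~\ref{1aee}, \ref{1aee++++}, and~\ref{cobr1aee++++}. First I would record the structure of transformation~C. Since $X$ and $X^\beta$ have the same underlying topological space, the groups $H_1$ and $M$ are identified, and the gates of $X^\beta$ are exactly the gates of $X$ together with two parallel copies $\beta_0^{+},\beta_0^{-}$ of each component $\beta_0$ of $\beta$ (the two sides of the cut). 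I define $\tilde\omega$ to agree with $\omega$ on the old gates and to orient $\beta_0^{+}$ and $\beta_0^{-}$ in the same direction under their canonical identifications with $\beta_0$.

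Next I would fix a single generic pair of loops $a,b$ (resp.\ a single generic loop $a$) that is generic for both quasi-surface structures, choosing the cutting neighborhood $N(\beta)$ thin enough to contain none of the finitely many (self-)intersections of the loops. Feeding the same representatives into the formulas of Lemma~\ref{1aeee} and Lemma~\ref{1aeee++}, and into the defining formula for $\nu_{X,\omega}$ in Section~\ref{cobob}, the bulk terms match verbatim: under the identification $(\Sigma^\beta)'=\Sigma'\setminus N(\beta)$ the intersections in the reduced core coincide with those in $\Sigma'$, and the old-gate contributions are unchanged because $\tilde\omega$ restricts to $\omega$ there. Thus each of the three identities reduces to showing that, for every component $\beta_0$, the contributions of the two new gates $\beta_0^{+}$ and $\beta_0^{-}$ cancel.

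The heart of the matter is this cancellation, which I would isolate as a single geometric lemma. A transverse crossing of a loop with $\beta_0$ produces exactly one crossing on each copy; since one copy is exited from the surface core while the other is entered, these carry opposite signs $\varepsilon_{p^{+}}=-\varepsilon_{p^{-}}$. Cutting an oriented surface along $\beta_0$ gives the two copies opposite boundary orientations, so with my choice of $\tilde\omega$ one has $\varepsilon(\tilde\omega,\beta_0^{+})=-\varepsilon(\tilde\omega,\beta_0^{-})$, whereas the $\tilde\omega$-orderings on the two copies coincide. Hence the index sets $T_{\tilde\omega}$ (and the chord sets entering $\nu$) on $\beta_0^{+}$ and $\beta_0^{-}$ are in canonical bijection, and the scalar prefactors $\varepsilon(\tilde\omega,\cdot)\,\varepsilon_p\,\varepsilon_q$ on paired terms are exactly opposite (the two $\varepsilon_p$-flips cancel, leaving the single $\varepsilon(\tilde\omega,\cdot)$-flip). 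It then remains only to check that the homotopy classes attached to paired terms agree: $\langle a_{p^{+}}b_{q^{+}}\rangle=\langle a_{p^{-}}b_{q^{-}}\rangle$ for the bracket, and $\langle a_{p_1^{+},p_2^{+}}\rangle_0=\langle a_{p_1^{-},p_2^{-}}\rangle_0$ together with its transpose for the cobracket.

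I expect this last point to be the main obstacle, and I would resolve it using the hypothesis that the components of $\beta$ are arcs disjoint from the gates, with endpoints on the free boundary $\partial X$: a regular neighborhood $D$ of $\beta_0$ in $X$ is then an embedded disk, hence simply connected. Outside $D$ the two graftings traverse the same arcs of the underlying loops and therefore coincide; inside $D$ they differ only in how the connecting gate-paths and the short loop-arcs $\eta_p$ running through $\beta_0\subset Y^\beta$ are routed. Since any two paths in a disk with the same endpoints are homotopic rel endpoints, sliding one grafting to the other along these arcs yields freely homotopic loops (and, for $\nu$, preserves contractibility, so the $\langle\cdot\rangle_0$-symbols also match). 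Equal homotopy classes against opposite scalars make the $\beta_0^{+}$- and $\beta_0^{-}$-contributions cancel termwise, which completes the three $\omega$-refined identities and hence the lemma. Throughout, computing with these convenient representatives is justified by the loop-move invariance of Lemma~\ref{svvvssvtrbucte}.
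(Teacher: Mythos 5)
Your proof is correct, and it reduces the lemma to the same $\omega$-refined identities $\bullet_{X,\omega}=\bullet_{X^\beta,\omega^\beta}$, $[-,-]_{X,\omega}=[-,-]_{X^\beta,\omega^\beta}$, $\nu_{X,\omega}=\nu_{X^\beta,\omega^\beta}$ that the paper proves, with the same choice of orientation on the new gates (both parallel copies directed like $\beta_l$). The difference is in how the new gates are disposed of. For the form and the bracket, the paper avoids any cancellation: it first positions the loops so that all crossings of $a$ with $\beta$ lie near the tails of the components and all crossings of $b$ near the heads, which makes the pair $\omega^\beta$-admissible; then the new gates simply contribute no correction terms, and the intersections in the two reduced cores match bijectively. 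You instead take an arbitrary common representative, apply the general formulas of Lemmas~\ref{1aeee} and~\ref{1aeee++}, and show that the contributions of the two parallel copies of each $\beta_l$ cancel termwise, using $\varepsilon_{p^{+}}=-\varepsilon_{p^{-}}$, $\varepsilon(\omega^\beta,\beta_l^{+})=-\varepsilon(\omega^\beta,\beta_l^{-})$, and the fact that the paired graftings differ only inside a disk neighborhood of $\beta_l$. Your route costs slightly more bookkeeping for the first two identities, but it pays off on the third: a single loop cannot be made ``admissible with itself'' at the new gates, so the paper's positioning trick is not available for $\nu$ (the paper only says the proof is ``similar''), and the pairwise cancellation you spell out is exactly what is needed there. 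In that sense your argument is the more uniform and more complete of the two; all the sign checks and the free-homotopy identifications you invoke are valid.
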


\begin{proof} The   gates  of $X^\beta$ are the gates of~$X$ and   additional  gates associated with the components  $\{\beta_l\}_l$ of~$\beta$. Namely, each $\beta_l$ gives rise to two  gates of $X^\beta$ which are proper segments in~$\Sigma'$   running   \lq\lq parallel''  to $\beta_l$  on different sides of $\beta_l$ in~$\Sigma'$.  For each~$l$, fix  an  orientation of~$\beta_l$ and orient the  associated   gates so that they look in the same direction as $\beta_l$. Then every gate orientation~$\omega$ of~$X$ determines  a gate orientation $\omega^\beta$ of~$X^\beta$.
Formulas \eqref{choribnbn++C}--\eqref{choribnbnm++C}  are   consequences of  the following stronger claim: for any~$\omega$,  we have
\begin{equation}\label{choribnbn}
 \bullet_{X, \omega}  =    \bullet_{X^\beta, \omega^\beta}   : H_1(X) \times H_1(X) \to \ZZ. \end{equation}
  \begin{equation}\label{choribnbnr++Cs}
 [-,-]_{X, \omega}   =      [-,-]_{X^\beta, \omega^\beta}  : M\times M \to M,  \end{equation}
 \begin{equation}\label{choribnbnm++Cs}
\nu_{X, \omega}   =      \nu_{X^\beta, \omega^\beta}  : M \to M \otimes M.  
  \end{equation}
  To prove \eqref{choribnbn}, pick an $\omega$-admissible pair $a,b$ of loops in~$X$ representing   $x,y\in H_1(X)$. Deforming if necessary $a,b$ near~$\beta$ we can assume that $a, b$ are transversal to~$\beta$,   all crossings  of~$a$ with~$\beta$ lie near the  tails of the components, and all crossings  of~$b$ with~$\beta$ lie near the  heads of the components.  Then the  pair $a,b$ is    $\omega^\beta$-admissible. The integer $x \bullet_{X, \omega} y $ is the algebraic number of intersections of $a,b$ in $\Sigma'$.  Since all these intersections lie away from $\beta$ and from the gates of~$X$, they bijectively correspond to   the intersections of $a,b$    in the reduced surface core   of $X^\beta$ (and have the same signs). Therefore  $x \bullet_{X, \omega} y  = x \bullet_{X^\beta, \omega^\beta} y $.
  The proofs of the equalities \eqref{choribnbnr++Cs} and~\eqref{choribnbnm++Cs} are similar. 
\end{proof}

\section{Stars in quasi-surfaces}\label{Stars}

We state and prove analogues of Theorems \ref{s-inter}--\ref{s-inter++cobr} for quasi-surfaces.

\subsection{Stars in~$X$}\label{Stars++} By a \emph{star} in the quasi-surface~$X$   we mean a  star~$s$ embedded in the surface core~$\Sigma$ of~$X$ so that $\partial s=s \cap \partial \Sigma  \subset \partial  \Sigma \setminus \alpha$.  We derive from~$s$    a new  structure of a quasi-surface in the underlying topological space of~$X$. Denote the number of leaves of~$s$   by $\vert s \vert$.  Pick a closed regular neighborhood~$V$ of~$s$ in $\Sigma \setminus \alpha \subset X$ and provide~$V$ with orientation induced by that of~$\Sigma$. 
It is clear that~$V$ is a 2-disk whose boundary  is formed by $\vert s \vert$ disjoint segments    in $\partial \Sigma \setminus \alpha$ and $\vert s \vert$ disjoint proper segments $\beta^s_1,..., \beta^s_{\vert s \vert}$ in~$\Sigma$. Set $Y_\star=\overline {X\setminus V} \subset X$ where the overline stands for the closure in the  underlying topological space of~$X$.     Taking~$V$ as the  surface core,   $Y_\star$ as the   singular core and  glueing~$V$ to $Y_\star$ along the   inclusions 
  $\{\beta_i^s  \hookrightarrow  Y_\star\}_{i=1}^{\vert s \vert  }$ we obtain a quasi-surface, ${X_\star}={X_\star}(s)$, with  the same underlying topological space as~$X$. This allows us to consider the bilinear maps
  $$\bullet_s= \bullet_{{X_\star}}: H_1(X) \times H_1(X) \to \ZZ, \quad  [-,-]_{s}=[-,-]_{{X_\star}}: M \times M\to M
  $$ and  the linear map $\nu_s=\nu_{{X_\star}}: M \to M \otimes M$.

  We next  compute the form $\bullet_s$  via   intersections of loops  with~$s$. As in Section~\ref{Introduction}, the orientation  of~$\Sigma$  at the  center  of~$s$ determines a cyclic order  in the set ${\text {Edg}}(s)$ of   edges of~$s$. For   $e \in {\text {Edg}}(s)$   we let $e^+ \in {\text {Edg}}(s)$ be  the next   edge   with respect to this order.  We say that a loop   in~$X$ is \emph{$s$-generic} if it     misses the vertices of~$s$,   meets all  edges of~$s$  transversely, and never  traverses a point of~$s$ more than once. A family of loops in~$X$  is \emph{$s$-generic} if   these loops   are $s$-generic and do not   meet at  points of~$s$.  
      Given an $s$-generic loop~$a$ in~$X$, we    let $a\cap s$ be the set of points of~$s$ traversed by~$a$. For an edge   $e \in {\text {Edg}}(s)$,  set $a\cap e =(a\cap s) \cap e$.  For $p\in a\cap e$,   the intersection sign of~$a$ and~$e$ at~$p$ is denoted $\mu_p (a) $ (recall that the  edges of~$s$ are directed from the center of~$s$ to the leaves).
  The integer $a\cdot e =\sum_{p\in a \cap e}\mu_p(a)$ is the algebraic number of   intersections of~$a$ and~$e$.

   \begin{lemma}\label{rigovbvbq1a}    For any   $s$-generic  pair of loops  $a,b$ in~$X$ representing $x,y \in   H_1(X) $,  
  \begin{equation}\label{homovbvbl}  x \bullet_s y = \sum_{e \in {\text {Edg}}(s)} \big (( a\cdot e) (b\cdot e^+) - (b\cdot e) ( a\cdot e^+) \big ). 
   \end{equation} 
 \end{lemma}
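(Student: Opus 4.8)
The plan is to exploit the identification $\bullet_s=\bullet_{X_\star}$ and to read off the intersection form of $X_\star$ from the gate-crossing formula \eqref{chorieee+bbb+veryryrsss}, after placing $a,b$ in a position adapted to the star. Recall that the surface core of $X_\star$ is the disk $V$ and that its gates are the proper arcs $\beta^s_1,\dots,\beta^s_{|s|}$; I will label them so that $\beta^s_i$ is the component of $\partial V\setminus\partial\Sigma$ lying between the consecutive edges $e_i$ and $e_i^+=e_{i+1}$ (indices mod $n=|s|$). The only geometric input I need is that, since $V$ is a thin regular neighborhood of the star, the part of $\partial V$ facing the sector between $e_i$ and $e_{i+1}$ runs alongside $e_i$, around the center, and back alongside $e_{i+1}$; in particular each edge $e_i$ has the gate $\beta^s_{i-1}$ on one side and $\beta^s_i$ on the other.

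First I would reduce to a convenient representative. It suffices to prove \eqref{homovbvbl} for one $s$-generic pair in each pair of classes $(x,y)$, because both sides are homology invariants. The left side is, being $\bullet_{X_\star}$. For the right side, note that $\sum_{i}(a\cdot e_i)(b\cdot e_{i+1})-(b\cdot e_i)(a\cdot e_{i+1})$ is twice the signed area of the closed polygon with vertices $(a\cdot e_i,\,b\cdot e_i)_{i\in\ZZ/n}$, hence is unchanged under a simultaneous translation $a\cdot e_i\mapsto a\cdot e_i+c$ of all first coordinates (and likewise for $b$). Thus the right side depends only on the differences $a\cdot e_i-a\cdot e_{i+1}$; and these are homology invariants, since in the disk $V$ the arc $\beta^s_i$ is homologous, rel its endpoints on $\partial X$, to $e_{i+1}-e_i$, so $a\cdot e_i-a\cdot e_{i+1}$ equals, up to a fixed sign, the value on $x$ of the homomorphism $H_1(X)\to\ZZ$ dual to $\beta^s_i$. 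I may therefore replace $a,b$ by any $s$-generic loops representing $x,y$.

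I would then use the \emph{standard} representatives in which every passage of $a$ (and of $b$) through $V$ is a short arc crossing a single edge transversally near its midpoint, all other arcs being pushed into $Y_\star$. For such a pair, distinct short arcs lie near distinct edges (or are parallel along one edge), so $a\cap b=\emptyset$ in $V'$ and the pair is generic for $X_\star$; formula \eqref{chorieee+bbb+veryryrsss}, applied to $X_\star$ with $\omega_0$ the gate orientation induced by $V'$, then gives $x\bullet_{X_\star}y=\sum_{(k,p,q)}\delta(p,q)\,\varepsilon_p(a)\,\varepsilon_q(b)$ over gate-crossings. The heart of the argument is the dictionary translating edge data into gate data: a crossing $p$ of $a$ with $e_i$ produces exactly two gate-crossings, one on $\beta^s_i$ and one on $\beta^s_{i-1}$, with $\varepsilon$-signs $+\mu_p(a)$ and $-\mu_p(a)$ respectively; and along $\beta^s_i$, in the $\omega_0$-order, all crossings coming from $e_i$ precede all those coming from $e_{i+1}$, the former ordered by decreasing and the latter by increasing distance to the center.

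Finally I would split the gate sum into cross-edge and same-edge pairings. A cross-edge pairing at $\beta^s_i$ matches an $a$-crossing of $e_i$ with a $b$-crossing of $e_{i+1}$, and symmetrically; inserting the $\varepsilon$-signs and the value of $\delta$ forced by the order above, its total contribution is $(a\cdot e_i)(b\cdot e_i^+)-(b\cdot e_i)(a\cdot e_i^+)$, and summing over $i$ yields exactly the right side of \eqref{homovbvbl}. The same-edge pairings cancel: the crossings of $e_i$ occur both on $\beta^s_i$ and on $\beta^s_{i-1}$ with identical sign-products $\mu_p(a)\mu_q(b)$ but with opposite $\omega_0$-orderings, so their contributions to the two gates are negatives of one another. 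I expect the main obstacle to be precisely this sign-and-order bookkeeping — pinning down $\varepsilon_p$ in terms of $\mu_p$ and the side of $e_i$, fixing the $\omega_0$-orientation of each gate, and verifying the orientation reversal responsible for the same-edge cancellation — rather than any conceptual difficulty; once the dictionary is correct, both halves of the formula fall out as above.
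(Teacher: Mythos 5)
Your argument is correct and follows essentially the same route as the paper's own proof: both identify $\bullet_s$ with $\bullet_{X_\star}$, apply the gate-crossing formula \eqref{chorieee+bbb+veryryrsss} to a pair with $a\cap b=\emptyset$ in the reduced core, translate each crossing of an edge into two gate crossings with opposite signs $\pm\mu_p$, and then check that the same-edge contributions cancel between the two gates adjacent to each edge while the cross-edge contributions assemble into $\sum_e\big((a\cdot e)(b\cdot e^+)-(b\cdot e)(a\cdot e^+)\big)$. The only difference is organizational: you first standardize the representatives (justified by your translation-invariance/signed-area reduction showing both sides depend only on $x,y$), whereas the paper keeps the given $s$-generic pair and instead shrinks the reduced neighborhood $V'$ until it is $a$- and $b$-adapted and free of intersections of $a$ with $b$, which renders that extra reduction unnecessary.
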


\begin{proof}    Consider the quasi-surface $X_\star=X_\star(s) $  derived as   above from a closed regular neighborhood~$V$ of~$s$ in $\Sigma \setminus \alpha$.   For  the reduced surface core of $X_\star$ we take a smaller closed regular neighborhood  $V'\subset V$ of~$s$.  The  gates of ${X_\star}$ are the segments in $\partial V'$ separating $V'$ from the rest of~$V$. Moving along the circle $\partial V'$ in the direction determined  by the orientation of~$V'$ induced by that of~$\Sigma$, we meet all gates in a certain cyclic order. For a gate $K$ of  ${X_\star}$, denote the next gate with respect to this cyclic order  by $K^+$. Note   that   there is a unique edge $e=e_K$ of~$s$ such that~$K$ is obtained by pushing the segment $e\cup e^+$ into $\Sigma \setminus s$.
Clearly, $e_{K^+}=(e_K)^+$.

To proceed, we  select the   closed regular neighborhood $V'\subset V$  taking into account the loops $a,b$.
We say that~$V'$   is \emph{$a$-adapted} if the set  $a(S^1)\cap V'$  is a disjoint union  of      proper segments $\{f_p\}_{p\in a\cap s}$  in~$V'$ such that $p\in f_p$  for all $p\in a\cap s$.
We  denote   the endpoints of the segment ~$f_p$  by $p', p''$  so that  the pair  (the orientation of $f_p$  from $p'$ to $p''$,  the orientation of the edge of~$s$ containing~$p$) determines the given orientation of~$\Sigma$ at~$p$. 
If  $V'$ is  $a$-adapted, then~$a$ has no self-intersections in~$V'$ and   crosses the gates precisely  at the points $\{p',p''\}_{p\in a\cap  s}$. The  crossing signs of~$a$ with the gates (see Section~\ref{fofofor}) are computed by  \begin{equation}\label{KK0} \varepsilon_{p'}(a)= \mu_p(a)  \quad {\text{and}}   \quad \varepsilon_{p''}(a)=  - \mu_p(a)   \end{equation}   for all $p\in a\cap s$.  
We select   the neighborhood  $V' \subset V$  of~$s$ so small (= narrow),   that it  is   $a$-adapted, $b$-adapted, and the loops $a,b$ do not meet in~$V'$.
Then for any gate~$K$ of $X_\star$, we have 
\begin{equation}\label{KK1}
a\cap K=\{p'\, \vert \, p \in a \cap e_K\} \sqcup \{p'' \, \vert \, p \in a\cap (e_K)^+\}  
  \end{equation}
  and, similarly, 
\begin{equation}\label{KK2} b\cap K=\{q'\, \vert \, q \in b \cap e_K\} \sqcup \{q'' \, \vert \, q \in b\cap (e_K)^+\}. \end{equation} 
% Then,  in the notation of Section~\ref{fofo},  $a\cap b=\emptyset$. 
We compute  $x\bullet_{{X_\star}} y$ via~\eqref{chorieee+bbb+veryryrsss}. The sum on the right hand-side of~\eqref{chorieee+bbb+veryryrsss} runs over all triples (a gate~$K$ of $X_\star$, a point of $a\cap K$, a point of $b\cap K$). By~\eqref{KK1} and~\eqref{KK2}, the contribution of  such triples with fixed~$K$ is the sum of the following four  expressions: 
$$\sigma^1_K=\sum_{ p \in a \cap e_K ,q \in b \cap e_K }  \, \delta(p', q') \,\varepsilon_{p'}(a)  \,\varepsilon_{q'}(b) ,$$
$$\sigma^2_K= \sum_{  p \in a \cap e_K ,q \in b \cap (e_K)^+}    \, \delta(p', q'') \,\varepsilon_{p'}(a)  \,\varepsilon_{q''}(b),$$
$$\sigma^3_K= \sum_{  p \in a\cap (e_K)^+ ,q \in b \cap e_K}  \, \delta(p'', q')  \,\varepsilon_{p''}(a)  \,\varepsilon_{q'}(b),$$
$$\sigma^4_K= \sum_{  p \in a\cap (e_K)^+ ,q \in b \cap (e_K)^+}  \, \delta(p'', q'')   \,\varepsilon_{p''}(a)  \,\varepsilon_{q''}(b).$$
To simplify these expressions we use the following notation: for distinct points $p,q$ of an  edge of~$s$,   set $\mu(p,q)=1$ if~$p$ lies between  the center of~$s$ and~$q$ and   set  $\mu(p,q)=-1$ otherwise. For any  $e\in {\text {Edg}} (s)$ and any points $ p \in a \cap e ,q \in b \cap e $,  we have $\delta(p', q')=\mu(p,q)$ and $\delta(p'', q'')=-\mu(p,q)$. Using this   and~\eqref{KK0}, we  get 
$$\sigma^1_K=  \sum_{ p \in a \cap e_K ,q \in b \cap e_K }  \, \mu(p,q) \, \mu_{p}(a)  \,\mu_{q}(b) $$
and
$$\sigma^4_K=- \sum_{  p \in a\cap (e_K)^+ ,q \in b \cap (e_K)^+}  \,\mu(p,q)   \, \mu_{p}(a)  \,\mu_{q}(b).$$
When~$K$ runs over all gates, both $e_K$ and $(e_K)^+$ run over all edges of~$s$. Therefore $\sum_K  (\sigma^1_K + \sigma^4_K)=0$.
Observe next  that $\delta(p', q'')=-1$ for all $p \in a \cap e_K $ and $q \in b \cap (e_K)^+$. Hence, 
$$\sigma^2_K=   \sum_{  p \in a \cap e_K ,q \in b \cap (e_K)^+}    \,\mu_p(a) \,\mu_q(b)= (a\cdot e_K) (b\cdot (e_K)^+).$$
Similarly, $\delta(p'', q')=1$ for all $p \in a\cap (e_K)^+ ,q \in b \cap e_K$ and so 
$$\sigma^3_K =   - \sum_{  p \in a\cap (e_K)^+ ,q \in b \cap e_K}     \,\mu_p(a) \,\mu_q(b)=-  (a\cdot (e_K)^+) (b\cdot e_K).$$
Therefore  $$
x \bullet_s y = \sum_K  (\sigma^1_K+\sigma^2_K + \sigma^3_K+\sigma^4_K)= \sum_{e \in {\text {Edg}}(s)} \big (( a\cdot e) (b\cdot e^+) - (b\cdot e) ( a\cdot e^+) \big ) .$$ 
\end{proof}

 Given two $s$-generic loops $a,b$ in~$X$ and points $p\in a\cap s, q\in b\cap s$, we  pick a  path~$c$ from~$p$ to~$q$ in~$s$ and write $a_pb_q$ for the loop $a_p c b_qc^{-1}$. Clearly, the   free homotopy class of this loop  in~$X$  does not depend on the choice of~$c$.  
   
     \begin{lemma}\label{rigovbvbq1++++a}    For any  $s$-generic loops  $a,b$ in~$X$, we have 
$$ [\langle a\rangle, \langle b \rangle]_s$$
$$ = \sum_{e \in {\text {Edg}}(s)} \big ( \sum_{p\in a\cap e, q\in b\cap e^+} \mu_p(a) \, \mu_q(b) \, \langle  a_p b_q \rangle  -  \sum_{ p\in a\cap e^+, q\in b\cap e}  \mu_p(a) \, \mu_q(b) \,   \langle      a_p b_q \rangle\big )  . 
$$
 \end{lemma}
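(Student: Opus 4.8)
The plan is to follow the proof of Lemma~\ref{rigovbvbq1a}, upgrading every sign-valued term to its loop-valued refinement. First I would reuse that proof's geometric setup without change: realize $[-,-]_s=[-,-]_{X_\star}$ through the quasi-surface $X_\star=X_\star(s)$ built from a regular neighborhood~$V$ of~$s$, and choose an $a$-adapted and $b$-adapted narrow reduced surface core $V'\subset V$ in which $a$ and $b$ do not meet. This retains the gate description of Lemma~\ref{rigovbvbq1a}: each gate~$K$ is the pushoff of $e_K\cup(e_K)^+$, the crossing signs obey \eqref{KK0}, and the crossings of $a,b$ with~$K$ split as in \eqref{KK1} and \eqref{KK2} according to whether a point comes from $a\cap e_K$ (a point $p'$) or from $a\cap(e_K)^+$ (a point $p''$). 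Since $a\cap b=\emptyset$ in~$V'$, I would evaluate $[\langle a\rangle,\langle b\rangle]_s$ by the grafting formula \eqref{chorvvieee++veryryrsss}; its sum over triples decomposes, for each gate~$K$, into four partial sums $\Theta^1_K,\dots,\Theta^4_K$ indexed exactly as the $\sigma^i_K$ of Lemma~\ref{rigovbvbq1a}, but now weighted by the grafted classes $\langle a_{p'}b_{q'}\rangle$, $\langle a_{p'}b_{q''}\rangle$, $\langle a_{p''}b_{q'}\rangle$, $\langle a_{p''}b_{q''}\rangle$.

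The one genuinely new point, and the step I expect to be the crux, is to identify each grafted class with the class $\langle a_p b_q\rangle$ of the statement, whose joining arc runs through~$s$ rather than through a gate. Here I would argue as follows. For $p,q\in s$ traversed by $a,b$, the loops $a_{p'}$ and $a_{p''}$ are reparametrizations of $a_p$ obtained by sliding the basepoint along the segment $f_p$ of $a\cap V'$ from~$p$ to its endpoints $p',p''$, and likewise for $b$; absorbing these sub-arcs into the joining path, the grafted loop based at a gate point is freely homotopic to $a_p\gamma b_q\gamma^{-1}$ for a path~$\gamma$ from~$p$ to~$q$ lying in~$V'$. Since $V'$ is a disk containing~$s$, this~$\gamma$ is homotopic rel endpoints to the arc of~$s$ from~$p$ to~$q$, and as the free homotopy class of a grafted loop depends only on the rel-endpoints class of its joining path, I obtain $\langle a_{p'}b_{q'}\rangle=\langle a_{p'}b_{q''}\rangle=\langle a_{p''}b_{q'}\rangle=\langle a_{p''}b_{q''}\rangle=\langle a_p b_q\rangle$ in all cases.

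With this identification the computation parallels Lemma~\ref{rigovbvbq1a} term by term. Using $\delta(p',q')=\mu(p,q)$, $\delta(p'',q'')=-\mu(p,q)$, $\delta(p',q'')=-1$ and $\delta(p'',q')=+1$ together with \eqref{KK0}, the same-edge contributions cancel: as~$K$ ranges over all gates both $e_K$ and $(e_K)^+$ range over all edges of~$s$, so $\sum_K\Theta^1_K$ and $\sum_K\Theta^4_K$ run over the same pairs $(p,q)$ on a common edge, and for each such pair the two terms carry opposite coefficients and, by the previous paragraph, the same class $\langle a_p b_q\rangle$; hence $\sum_K(\Theta^1_K+\Theta^4_K)=0$. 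The mixed sums evaluate to $\Theta^2_K=\sum_{p\in a\cap e_K,\,q\in b\cap(e_K)^+}\mu_p(a)\,\mu_q(b)\,\langle a_p b_q\rangle$ and $\Theta^3_K=-\sum_{p\in a\cap(e_K)^+,\,q\in b\cap e_K}\mu_p(a)\,\mu_q(b)\,\langle a_p b_q\rangle$.

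Finally I would sum over all gates. Since $e_K$ runs over all edges of~$s$ with $(e_K)^+=e^+$ as~$K$ runs over the gates, $\sum_K\Theta^2_K$ and $\sum_K\Theta^3_K$ assemble into the two sums in the asserted formula, proving the lemma on free homotopy classes $\langle a\rangle,\langle b\rangle$; bilinearity and the skew-symmetry established in Theorem~\ref{1aee++++} then extend it as needed. The essential difficulty is entirely concentrated in the grafted-class identification of the second paragraph; once that is secured, the sign bookkeeping is identical to the homological case.
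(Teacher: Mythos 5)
Your proposal is correct and follows the same route as the paper: the paper's own proof simply replaces the sums $\sigma^1_K,\dots,\sigma^4_K$ of Lemma~\ref{rigovbvbq1a} by their loop-weighted versions and applies Formula~\eqref{chorvvieee++veryryrsss}, exactly as you do. Your added justification that all four grafted classes coincide with $\langle a_p b_q\rangle$ because the joining paths live in the disk $V'$ (and hence that the same-edge contributions genuinely cancel as elements of $M$, not just numerically) is the right way to fill in the detail the paper leaves implicit.
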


\begin{proof}     We use the same $V'$ as in the proof of Lemma~\ref{rigovbvbq1a}  and apply Formula~\eqref{chorvvieee++veryryrsss} to compute $[\langle a\rangle, \langle b \rangle]_s$. The  expressions $\sigma^1_K,..., \sigma^4_K$ are  replaced  with the sums 
$$ \sum_{ p \in a \cap e_K ,q \in b \cap e_K }  \, \delta(p', q') \,\varepsilon_{p'}(a)  \,\varepsilon_{q'}(b)  \langle  a_p b_q \rangle ,$$
$$  \sum_{  p \in a \cap e_K ,q \in b \cap (e_K)^+}    \, \delta(p', q'') \,\varepsilon_{p'}(a)  \,\varepsilon_{q''}(b)  \langle  a_p b_q \rangle ,$$
$$  \sum_{  p \in a\cap (e_K)^+ ,q \in b \cap e_K}  \, \delta(p'', q')  \,\varepsilon_{p''}(a)  \,\varepsilon_{q'}(b)  \langle  a_p b_q \rangle ,$$
$$  \sum_{  p \in a\cap (e_K)^+ ,q \in b \cap (e_K)^+}  \, \delta(p'', q'')   \,\varepsilon_{p''}(a)  \,\varepsilon_{q''}(b)  \langle  a_p b_q \rangle .$$
The rest of the argument goes along the same lines as  the proof of Lemma~\ref{rigovbvbq1a}.
  \end{proof}

 Given an $s$-generic loop~$a$ in~$X$  and points $p_1,p_2 \in a\cap s $, we   write $a_{p_1,p_2}$ for the loop going from~$p_1$ to~$p_2$  along~$a$ and then going back to~$p_1$
along a path in~$s$.   Clearly, the   free homotopy class of this loop   does not depend on the choice of the latter path.

     \begin{lemma}\label{rigovbvbq1++++abbbn}    For any  $s$-generic loop~$a$ in~$X$, we have 
$$ \nu_s(\langle a \rangle)   $$
$$=\sum_{e \in {\text {Edg}}(s)} \,  \sum_{p_1\in a\cap e, p_2\in a\cap e^+} \mu_{p_1}(a)  \,\mu_{p_2 }(a) \, \big (\langle a_{p_1,p_2} \rangle_0 \otimes \langle a_{ p_2, p_1} \rangle_0  - \langle a_{p_2, p_1} \rangle_0  \otimes \langle a_{p_1,p_2} \rangle_0  \big )  .$$ 
 \end{lemma}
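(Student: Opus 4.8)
The plan is to mimic the proofs of Lemmas~\ref{rigovbvbq1a} and~\ref{rigovbvbq1++++a}, replacing the homological computation by the cobracket computation~\eqref{choricvxvxeee++veryryrsss}. First I would fix the same narrow, $a$-adapted regular neighborhood $V'\subset V$ of~$s$ used in the proof of Lemma~\ref{rigovbvbq1a}. Since $a(S^1)\cap V'$ is then a disjoint union of the proper segments $\{f_p\}_{p\in a\cap s}$, the loop~$a$ has no self-intersections in the reduced surface core of $X_\star$; that is, $\#a=\emptyset$ in $X_\star$, so $\nu_s(\langle a\rangle)=\nu_{X_\star}(\langle a\rangle)$ is given by the single sum~\eqref{choricvxvxeee++veryryrsss} over the chords of~$a$. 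I would then split this sum according to the gate~$K$ of $X_\star$ and, using~\eqref{KK1}, according to whether each endpoint of a chord is of the form $p'$ (coming from a point $p\in a\cap e_K$) or $p''$ (coming from a point $p\in a\cap(e_K)^+$). This produces, for each gate~$K$, four families of terms parallel to $\sigma^1_K,\dots,\sigma^4_K$ in the proof of Lemma~\ref{rigovbvbq1a}, now carrying the extra tensor factor $\langle a_{p_1,p_2}\rangle_0\otimes\langle a_{p_2,p_1}\rangle_0$.

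The sign bookkeeping is identical to that of Lemma~\ref{rigovbvbq1a}: by~\eqref{KK0} together with $\delta(p',q')=\mu(p,q)$, $\delta(p'',q'')=-\mu(p,q)$, $\delta(p',q'')=-1$, and $\delta(p'',q')=1$, the scalar coefficients reduce to $\pm\,\mu_p(a)\,\mu_q(a)$. The one genuinely new ingredient is the identification of the gate-based loops with the star-based ones. For $p\in a\cap e$ and $q\in a\cap e^+$ on consecutive edges, the points $p'$ and $q''$ both lie on the gate $K_e$, and I claim $\langle a_{p',q''}\rangle=\langle a_{p,q}\rangle$ (whence the same holds for $\langle\,\cdot\,\rangle_0$): the two loops share the same arc of~$a$ and differ only within the disk~$V$, where each is a single arc, and arcs in a disk with matching endpoints are homotopic rel endpoints. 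Likewise $\langle a_{p',q'}\rangle=\langle a_{p'',q''}\rangle=\langle a_{p,q}\rangle$ whenever $p,q\in a\cap e$ lie on one edge. Note also that, because~\eqref{choricvxvxeee++veryryrsss} sums over \emph{ordered} triples, the chords $(K,p_1,p_2)$ and $(K,p_2,p_1)$ combine, via $\delta(p_2,p_1)=-\delta(p_1,p_2)$, into the antisymmetric tensor $\langle a_{p_1,p_2}\rangle_0\otimes\langle a_{p_2,p_1}\rangle_0-\langle a_{p_2,p_1}\rangle_0\otimes\langle a_{p_1,p_2}\rangle_0$ of the statement.

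With these identifications the argument finishes exactly as in Lemma~\ref{rigovbvbq1a}. The terms with both endpoints on one edge (the analogues of $\sigma^1_K$ and $\sigma^4_K$) carry opposite coefficients $\pm\,\mu(p,q)\,\mu_p(a)\,\mu_q(a)$ but equal tensors $\langle a_{p,q}\rangle_0\otimes\langle a_{q,p}\rangle_0$, so they cancel term by term once $K$ runs over all gates (so that both $e_K$ and $(e_K)^+$ sweep out ${\text{Edg}}(s)$). The surviving terms (the analogues of $\sigma^2_K,\sigma^3_K$), after the homotopy identification and the relabeling $p_1=p$, $p_2=q$, assemble into precisely the claimed double sum over $p_1\in a\cap e$, $p_2\in a\cap e^+$. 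I expect the main obstacle to be the homotopy identification: one must check carefully that closing an arc of~$a$ along a gate versus along~$s$ gives freely homotopic loops in~$X$, track the truncation $\langle\,\cdot\,\rangle_0$ (harmless, since freely homotopic loops are simultaneously contractible or not), and verify that the orientation conventions fixing $p',p''$ in~\eqref{KK0} feed the correct signs into the two surviving families.
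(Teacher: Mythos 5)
Your proposal is correct and follows exactly the route the paper takes: its proof of this lemma is literally ``use the same $V'$ as in Lemma~\ref{rigovbvbq1a}, apply Formula~\eqref{choricvxvxeee++veryryrsss}, and repeat the same ideas,'' and you have carried out precisely that computation, including the four-family decomposition by gates, the cancellation of the same-edge terms, and the (correct) simply-connectedness argument identifying the gate-closed loops with the star-closed ones.
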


\begin{proof}      We use the same $V'$ as in the proof of Lemma~\ref{rigovbvbq1a}  and apply Formula~\eqref{choricvxvxeee++veryryrsss} to compute $\nu_s(\langle a\rangle)$. The rest of the argument  uses the same ideas as  the proof of Lemma~\ref{rigovbvbq1a}.
  \end{proof}

\subsection{Star-fillings of~$X$}\label{Star-fillings} A \emph{star-filling} of~$X$ is  a finite family~$F$  of disjoint stars in~$X$ such that each component of   the set $\Sigma \setminus \cup_{s\in F} s$ is a disk   meeting  $\partial \Sigma \setminus \alpha $ at one or two open segments.

\begin{lemma}\label{rigoq1a}    If  the surface core~$\Sigma$ of~$X$ is compact  and each  component of~$\Sigma$  has a non-void boundary then~$X$ has a  star-filling.
 \end{lemma}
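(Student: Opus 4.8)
The plan is to construct a star-filling explicitly, by first cutting $\Sigma$ into a single disk and then filling that disk with one further star. First I would reduce to the case of connected $\Sigma$: stars lying in distinct components are automatically disjoint, and the condition on the complementary regions is local to each component, so a star-filling of $\Sigma$ is obtained as the union of star-fillings of its components. Thus assume $\Sigma$ is connected, compact, oriented, with $\partial\Sigma\neq\emptyset$. Note that $\alpha$, being a union of segments, cannot contain a whole boundary circle, so the free boundary, set $\partial_0\Sigma=\partial\Sigma\setminus\alpha$, meets every component of $\partial\Sigma$; in particular it is non-empty.

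Next I would choose a finite family of disjoint proper arcs $a_1,\dots,a_m$ in $\Sigma$, all of whose endpoints lie in $\partial_0\Sigma$, such that the complement $\Sigma\setminus\bigcup_i a_i$ is a single open disk $\Delta$. Such a system exists by the standard fact that a compact connected surface with boundary can be cut along finitely many disjoint proper arcs into a disk; one may take all endpoints in a prescribed non-empty part of the boundary, here $\partial_0\Sigma$, after an isotopy of the endpoints along $\partial\Sigma$. Regarding each $a_i$ as a two-leaf star, with center an interior point and leaves the two endpoints, yields disjoint stars in $X$ with $\partial a_i\subset\partial_0\Sigma$, as required.

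It then remains to fill $\Delta$ with a single star. Let $\phi_1,\dots,\phi_r$ be the maximal arcs of $\partial_0\Sigma$ lying on $\partial\Delta$ (the components of $\partial_0\Sigma\cap\partial\Delta$); these are separated along the circle $\partial\Delta$ by ``bound'' arcs arising from the cut copies of the $a_i$ and from $\alpha$, and $r\geq 1$ since $\partial_0\Sigma\neq\emptyset$. If $r\geq 2$, I would place a star with center in $\operatorname{int}\Delta$ and one edge running to a leaf in the interior of each $\phi_j$; this cuts $\Delta$ into $r$ sectors, each meeting $\partial_0\Sigma$ in exactly two arcs (a terminal piece of $\phi_j$ and an initial piece of $\phi_{j+1}$, indices cyclic). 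If $r=1$, I would instead place a two-leaf star whose two leaves lie in the interior of the single arc $\phi_1$; the resulting chord cuts $\Delta$ into two disks, one meeting $\partial_0\Sigma$ in one arc and the other in two. In either case the components of $\Sigma\setminus\bigcup_{s} s=\Delta\setminus(\text{the disk star})$ are disks meeting $\partial_0\Sigma$ in one or two open segments, and the total family (the $a_i$ together with the disk star) is disjoint; this is the desired star-filling.

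The routine verifications—that the arcs and the final star are embedded and pairwise disjoint, and that each complementary region is a disk—are straightforward. The point requiring the most care, and the main obstacle, is arranging the cutting arcs and the leaves of the final star to lie on $\partial_0\Sigma$ (never on $\alpha$) while still cutting $\Sigma$ into a single disk; this is exactly where the observation that $\partial_0\Sigma$ meets every boundary circle is used. The second delicate point is the bookkeeping showing that each sector meets $\partial_0\Sigma$ in precisely one or two arcs rather than more, which is what singles out the ``one star per complementary disk'' construction.
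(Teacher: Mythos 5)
Your construction is correct, and while it is close in spirit to the paper's proof it uses a genuinely different decomposition. The paper cuts $\Sigma$ along disjoint proper arcs (with endpoints pushed off $\alpha$) into a disjoint union of possibly many disks $D_j$, does \emph{not} include the cutting arcs in the star family, and places in each $D_j$ a single star whose leaves hit each of the $n_j$ free boundary segments of $\partial D_j$ (omitting the disks with $n_j\leq 1$). You instead cut each component into a \emph{single} disk $\Delta$, promote every cutting arc to a two-leaf star of the family, and add one last star inside $\Delta$, with a separate chord construction when $r=1$. What your version buys is a more transparent verification of the complementary-region condition: since the cutting arcs are themselves removed, the components of $\Sigma\setminus\bigcup_{s\in F}s$ are exactly the sectors of $\Delta$ cut out by the final star, and no bookkeeping is needed about how sectors of different disks become identified across the cuts. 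In the paper's version that identification does occur (a sector whose ``bound'' side is a copy of a cutting arc gets glued to the region on the other side), and one must check that the free segments which join up at the endpoints of the cut still number one or two per component --- a point the paper leaves implicit. The price you pay is the extra case $r=1$ and the (standard, and correctly flagged) fact that the cut system can be arranged with all endpoints in $\partial\Sigma\setminus\alpha$; both are handled adequately in your write-up.
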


\begin{proof}  Cutting~$\Sigma$ along a finite set of disjoint proper segments $\{\beta_i\}_i$ we can get a disjoint union of closed disks $\{D_j\}_j$.
Pushing if necessary the endpoints of the segments  $\{\beta_i\}_i$ along $\partial \Sigma$ we can ensure that $\beta_i \cap \alpha= \emptyset$ for all~$i$. For each~$j$, the circle $\partial D_j$   is formed by several, say, $ n_j$ segments   in $\partial \Sigma \setminus \alpha$ and the same number of   segments which are either components of~$\alpha$ or copies of  some $\beta_i$'s.
  If $n_j \geq 2$, then we pick a star $s_j \subset D_j$ with center in $\Int (D_j)$ and with   leaves inside  the above-mentioned  $n_j$ 
  segments in $\partial \Sigma \setminus \alpha$.   Composing the inclusion $s_j \subset D_j$ with the natural embedding $D_j \hookrightarrow \Sigma$, we can view each $s_j$ as a star in~$\Sigma$.  Then the family $\{s_j \vert n_j \geq 2 \}_j$ is a star-filling of~$X$.
  \end{proof}
  
  \begin{lemma}\label{rigoq1-fillinga}   For any star-filling $F$ of~$X$, we have 
   \begin{equation}\label{choribneebn++C}
\bullet_{X}   =\sum_{s\in F}  \bullet_{s}  : H_1(X) \times H_1(X) \to \ZZ,   \end{equation}
 \begin{equation}\label{cddhoribnbnr++C}
 [-,-]_{X}   =   \sum_{s\in F}   [-,-]_{s}  : M\times M \to M,  \end{equation}
 \begin{equation}\label{chllsoribnbnm++C}
\nu_{X}   =   \sum_{s\in F}   \nu_{s}  : M \to M \otimes M.  
  \end{equation} \end{lemma}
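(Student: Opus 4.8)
The plan is to realize the three right-hand sides as the contributions of a single quasi-surface obtained from $X$ by the transformations C and D of Section~\ref{Operations on quassi-surfaces}, whose invariance and splitting properties (the lemmas of Sections~\ref{modiffD} and~\ref{modiffC}) are already available. Concretely, I would cut $X$ along the boundary arcs of regular neighborhoods of the stars, so that the surface core breaks into the star-neighborhoods (which recover the $X_\star(s)$) together with a collection of ``filler'' disks; the whole content of the lemma is then that the fillers contribute nothing.

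First I would, for each star $s\in F$, fix a closed regular neighborhood $V_s$ of $s$ in $\Sigma\setminus\alpha$ as in Section~\ref{Stars++}, with bounding proper arcs $\beta^s_1,\dots,\beta^s_{\vert s\vert}$; after a small isotopy I may assume the $V_s$ are pairwise disjoint and contained in the reduced surface core $\Sigma'$. Set $\beta=\bigcup_{s\in F}\bigcup_i\beta^s_i$. This is a proper compact $1$-submanifold of $\Sigma'$ whose components are segments disjoint from the gates of $X$, so the transformation C applies and, by the invariance lemma of Section~\ref{modiffC}, the forms $\bullet_X$, $[-,-]_X$, $\nu_X$ coincide with those of $X^\beta$. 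The effect of the cut is that the surface core $\Sigma^\beta$ is now disconnected: its components are the disks $\{V_s\}_{s\in F}$ together with the components of $\Sigma\setminus\bigcup_s V_s$, which (applying the star-filling hypothesis to $\Sigma\setminus\bigcup_s s$) are disks meeting $\partial\Sigma\setminus\alpha$ in one or two open segments. By the splitting lemma of Section~\ref{modiffD}, each operation of $X^\beta$ is the sum of the corresponding operations of the quasi-surfaces carried by these components. For a component $V_s$ the induced structure has surface core $V_s$, singular core $\overline{X\setminus V_s}$, and gates the arcs $\beta^s_i$, which is exactly the quasi-surface $X_\star(s)$; its contribution is therefore $\bullet_s$, $[-,-]_s$, and $\nu_s$. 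It remains only to show that every filler disk $D$ contributes zero to all three operations.

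The filler-vanishing step is the one I expect to be the main obstacle, and it is where the Examples of Section~\ref{A vanishing case} are used. A filler $D$ carries either one gate (when it meets $\partial\Sigma\setminus\alpha$ in a single segment) or two gates (when it meets it in two segments), and in each case it is, up to a homeomorphism respecting its gates, one of the two quasi-surfaces treated in Section~\ref{A vanishing case}. Since $\bullet_X$, $[-,-]_X$, and $\nu_X$ are independent of the gate orientation (Theorems~\ref{1aee},~\ref{1aee++++},~\ref{cobr1aee++++}), I am free to compute the contribution of $D$ with the gate orientation chosen in those Examples: the one-gate case vanishes because every loop can be pushed off the reduced surface core, while the two-gate case vanishes because a generic loop can be normalized to cross $D$ in parallel straight arcs, leaving no self-intersections or mutual intersections inside $D$, and the two gates then receive opposite signs $\varepsilon(\omega,k)$ so their contributions cancel. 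The delicate point is precisely the existence of this normal form \emph{inside $X$ rather than merely inside the abstract disk} — that a loop meeting a two-gate filler can be homotoped in $X$ into straight parallel arcs — after which the cancellation is verbatim the computation of Section~\ref{A vanishing case}. Granting this, every filler contributes zero, and summing the surviving contributions over $s\in F$ yields \eqref{choribneebn++C}--\eqref{chllsoribnbnm++C}.
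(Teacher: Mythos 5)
Your proposal matches the paper's own proof essentially step for step: cut along the arcs $\beta=\bigcup_{s\in F}\bigcup_i\beta^s_i$ bounding disjoint regular neighborhoods of the stars, invoke the invariance under transformation~C and the splitting under transformation~D, identify each star-neighborhood component with $X_\star(s)$, and kill the filler disks by appealing to the vanishing Examples of Section~\ref{A vanishing case}. The ``delicate point'' you flag is already handled there, since those Examples are stated for a quasi-surface with arbitrary singular core, which is exactly the situation of a filler component.
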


\begin{proof} We  prove the first  equality, the other two are proven similarly. Without loss of generality we can assume that all stars in the family~$F$ lie in the reduced surface core $\Sigma' \subset \Sigma$. We choose  closed regular neighborhoods $\{V^s \supset s\}_{s\in F}$ so that they are pairwise disjoint and lie in  $\Sigma'$.  For each $s\in F$, let $\beta^s_1,..., \beta^s_{\vert s \vert}$ be the  segments in  $ \partial V^s $ separating $V^s$ from the rest of~$X$, cf. Section~\ref{Stars++}. Then the 1-manifold $\beta=\cup_{s\in F} \cup_{i=1}^{\vert s\vert} \beta^s_i$ satisfies the requirements needed to apply the transformation~C to~$X$. This transformation  produces  a quasi-surface $X^\beta$ having the same underlying topological space as~$X$. By  \eqref{choribnbn++C},   $
\bullet_{X}   = \bullet_{X^\beta}$.  Note that the surface core of $X^\beta$ is a disjoint union of the surfaces $\{V^s \supset s\}_{s\in F}$ and     $\overline{\Sigma \setminus \cup_{s\in F} V^s}$. By \eqref{choribnbn++}, the form  $\bullet_{X^\beta}$ is the sum of the 
forms $\bullet$ associated with the connected components of the surface core of   $X^\beta$. Each component   $V^s \supset s$ contributes the form  $\bullet_s$ to this sum. 
By the definition of a star-filling, all   components of the surface  $\overline {\Sigma \setminus \cup_{s\in F} V^s}$  are homeomorphic  to the quasi-surfaces described in  Section~\ref{A vanishing case}. Therefore the associated pairings $\bullet$ are equal to zero and 
$$\bullet_{X}   = \bullet_{X^\beta}  =\sum_{s\in F}  \bullet_{s}  : H_1(X) \times H_1(X) \to \ZZ.$$
  \end{proof}

\section{Proof of Theorems \ref{s-inter}--\ref{s-inter++cobr} and the case of  closed surfaces}\label{Proof of Theorems and their extension to closed surfaces}

\subsection{Proof of Theorems \ref{s-inter}--\ref{s-inter++cobr}}\label{Proof of Theorems and} Consider  the quasi-surface $X=X(\Gamma) $ with surface core~$\Gamma$,   empty singular core, and   $\alpha=\emptyset\subset \partial \Gamma$.  (Alternatively, one can use in this proof  a quasi-surface  with  the surface core~$\Gamma$, a 1-point singular core, and a set $\alpha\subset \partial \Gamma$ consisting of a single segment.) Any star~$s$ in~$\Gamma$ is a star in~$X$. 
Lemma~\ref{rigovbvbq1a}  implies that the skew-symmetric bilinear form $\bullet_s$ in $H_1(\Gamma)$ satisfies the conditions of Theorem~\ref{s-inter} and we  set  $\cdot_s=\bullet_s$. Note that $\bullet_X= 2 \, \cdot_X$ as is clear from  the remarks after the statement of Theorem~\ref{1aee}. Therefore  Theorem~\ref{s-inter2} is a direct consequence of Formula~\eqref{choribneebn++C}.
Similarly, the first claim  of Theorem~\ref{s-inter++} follows from Lemma~\ref{rigovbvbq1++++a},
and the second  claim  of Theorem~\ref{s-inter++} follows from Formula~\eqref{cddhoribnbnr++C}.
The claims of Theorem~\ref{s-inter++cobr} follow respectively  from Lemma~\ref{rigovbvbq1++++abbbn} and Formula~\eqref{chllsoribnbnm++C}.

\subsection{The case of closed surfaces}\label{The case of closed surfaces} Each closed    oriented surface~$\Phi$ gives rise to the homological  intersection form $\cdot_\Phi: H_1(\Phi) \times H_1(\Phi) \to \ZZ$, to 
  Goldman's bracket  $[-,-]_\Phi:M\times M\to M$, and  to the cobracket $ \nu_{\Phi}:M \to M \otimes M$. Here    $ M $  is  the free abelian group whose basis   is the set of free   homotopy classes of loops in~$\Phi$.  
 The definitions of $[-,-]_\Phi $ and $\nu_\Phi $ repeat   word for word  the definitions given in the introduction in the case of  surfaces with boundary. In this setting there seem to be no   analogues of the   maps $\cdot_s, [-,-]_s, \nu_s$ derived from stars. We compute  the maps $\cdot_\Phi, [-,-]_\Phi, \nu_{\Phi}$    in terms of so-called filling graphs  which we now define. 

By a \emph{bipartite graph} we mean a finite graph~$G$ provided with a partition of its  set of vertices   into two disjoint subsets $B(G)$ and $R(G)$ whose elements are called respectively the blue vertices and the red vertices so that every edge of~$G$  connects a blue vertex to a red vertex.    Note that a bipartite graph can not have edges connecting a vertex to itself. 
 
A \emph{filling graph} of~$\Phi$ is  a bipartite graph~$G$ embedded in~$\Phi$ so that (i) all components of $\Phi \setminus G$ are open disks and (ii) going along the boundary of each of these disks one traverses at most 4 edges and 4 vertices of~$G$. For example, any  triangulation~$\tau$ of~$\Phi$ determines a filling graph of~$\Phi$   formed by the vertices of~$\tau$ (declared to be \lq\lq blue''), the centers of the 2-simplices of~$\tau$  (declared to be \lq\lq red''), and the edges connecting the center of each 2-simplex of~$\tau$ to the   vertices of this 2-simplex. Exchanging the colors of the vertices, one derives from any filling graph of~$\Phi$ the dual filling graph. 

We show now how to calculate  the maps $\cdot_\Phi, [-,-]_\Phi$,   $ \nu_{\Phi}$ via a  filling graph $G\subset \Phi$. Note  that   the orientation of~$\Phi$ at any blue vertex  $v\in B(G)$ determines a cyclic order in the set  $\text{Edg}_v$ of the edges of~$G$ adjacent to~$v$. For   $e \in {\text {Edg}}_v$   we let $e^+ \in {\text {Edg}}_v$ be  the next   edge   with respect to this order. The set $ {\text {Edg}} (G)$  of all edges of~$G$ is a disjoint union of the sets $\{{\text {Edg}}_v\,  \vert \,  v\in B(G)\}$. Therefore the map $e\mapsto e^+$ is a permutation in  ${\text {Edg}} (G)$. 

We say that a loop   in~$\Phi$ is \emph{$G$-generic} if it   misses   the vertices of~$G$,   meets all  edges of~$G$  transversely, and never  traverses a point of~$G$ more than once. It is clear that any loop in~$\Phi$ can  be made $G$-generic by a small deformation.   We  orient each edge~$e$ of~$G$  from  its blue vertex  to its  red vertex.    For  a  $G$-generic loop~$a$ in~$\Phi$,     set $a\cap e =a(S^1) \cap  e $. The intersection sign of~$a$ and~$e$ at any point $p\in a\cap e$ is denoted by $\mu_p (a) $.   Set $a\cdot e =\sum_{p\in a \cap e}\mu_p(a)$.

\begin{theor}\label{rigoq1a}     For any $x,y \in H_1(\Phi)$ and any $G$-generic loops $a,b$ in~$\Phi$ representing $x,y$, we have 
  \begin{equation}\label{homolclosed} 2\, x \cdot_\Phi y = \sum_{e \in {\text {Edg}}(G)} \big (( a\cdot e) (b\cdot e^+) - (b\cdot e) ( a\cdot e^+) \big ). 
   \end{equation} 
 \end{theor}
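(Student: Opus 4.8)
The plan is to reduce \eqref{homolclosed} to the surface-with-boundary case already settled in Theorems~\ref{s-inter} and~\ref{s-inter2}. The mechanism is to puncture $\Phi$ at the red vertices of $G$, turning $\Phi$ into a compact surface with boundary in which the edges issuing from each blue vertex become a genuine star. Since $a$ and $b$ are $G$-generic, they miss every vertex of $G$; so I would choose, for each red vertex $r \in R(G)$, a closed disk $D_r$ centered at $r$, with the $D_r$ pairwise disjoint, disjoint from $a(S^1) \cup b(S^1)$, and so small that $D_r \cap G$ consists of half-open initial sub-arcs of the edges incident to $r$. Setting $\Phi^\circ = \Phi \setminus \bigcup_r \inter(D_r)$, this is a compact oriented surface whose boundary is the family of circles $\{\partial D_r\}_r$, nonempty on every component, so Theorems~\ref{s-inter} and~\ref{s-inter2} apply to $\Phi^\circ$ (component by component if needed, all the relevant forms being additive over components).

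Next I would exhibit a star-filling of $\Phi^\circ$. For each blue vertex $v \in B(G)$ the edges in $\text{Edg}_v$, cut off where they first meet the circles $\partial D_r$, form a star $s_v \subset \Phi^\circ$ with center $v$ and $\partial s_v \subset \partial \Phi^\circ$; the cyclic order on $\text{Edg}(s_v) = \text{Edg}_v$ induced by the orientation of $\Phi$ at $v$ is exactly the one defining $e \mapsto e^+$, and the convention orienting edges from blue to red vertex matches the center-to-leaf orientation of a star, so the signs $\mu_p$ and the counts $a \cdot e$ agree in both settings. Because each edge of $G$ has a unique blue endpoint and the edges meet only at the excised red vertices, the stars $\{s_v\}_{v \in B(G)}$ are pairwise disjoint. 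Finally I must check that every component of $\Phi^\circ \setminus \bigcup_v s_v = \Phi \setminus \big(G \cup \bigcup_r \inter(D_r)\big)$ is a disk touching $\partial \Phi^\circ$ in one or two open segments: such a component is a face of $G$ with the corner at each of its red vertices rounded off along a $\partial D_r$, hence still a disk, and since $G$ is bipartite the colors alternate around a face, so the hypothesis that a face meets at most $4$ edges and $4$ vertices forces it to carry exactly one or two red vertices, each contributing one boundary arc. Thus $F = \{s_v\}_v$ is a star-filling.

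With this in hand the computation assembles quickly. Theorem~\ref{s-inter2} gives $2\,\cdot_{\Phi^\circ} = \sum_{v \in B(G)} \cdot_{s_v}$, and evaluating each $\cdot_{s_v}$ on $a, b$ by the defining formula of Theorem~\ref{s-inter} yields $a \cdot_{s_v} b = \sum_{e \in \text{Edg}_v}\big((a \cdot e)(b \cdot e^+) - (b \cdot e)(a \cdot e^+)\big)$. Summing over $v$ and using $\text{Edg}(G) = \bigsqcup_v \text{Edg}_v$ (the successor $e^+$ of $e$ staying within the block $\text{Edg}_v$ of its blue vertex) recovers the right-hand side of \eqref{homolclosed} as $2\,([a] \cdot_{\Phi^\circ} [b])$, where $[a], [b] \in H_1(\Phi^\circ)$ are the classes of $a, b$. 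The last step is the homological comparison $[a] \cdot_{\Phi^\circ} [b] = x \cdot_\Phi y$: as $a, b$ already lie in $\Phi^\circ$ and all of their transverse intersections occur away from the removed disks, the signed intersection count is literally the same whether performed in $\Phi^\circ$ or in $\Phi$, while the inclusion $\Phi^\circ \hookrightarrow \Phi$ carries $[a], [b]$ to $x, y$.

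I expect the heart of the argument to be the verification of the star-filling condition, i.e.\ turning the combinatorial bound ``at most $4$ edges and $4$ vertices per face'' into the geometric ``one or two boundary segments per complementary disk''. The bipartite hypothesis is essential here, as it pins the number of red corners of each face to exactly one or two and guarantees that rounding those corners keeps the face a disk; a blue vertex of degree one, should the definition permit it, is harmless since there $e^+ = e$ and its contribution vanishes on both sides. The homological comparison between $\Phi^\circ$ and $\Phi$ is routine but deserves an explicit word, since $\cdot_{\Phi^\circ}$ is a priori only the intersection form of the bordered surface.
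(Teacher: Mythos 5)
Your proposal is correct and follows essentially the same route as the paper: remove small disks around the red vertices to obtain a compact bordered surface, observe that the edges at each blue vertex become a star and that these stars form a star-filling, apply Theorems~\ref{s-inter} and~\ref{s-inter2}, and compare intersection numbers under the inclusion. Your explicit verification of the star-filling condition (colors alternating around a face, hence one or two red corners) only fills in a step the paper leaves implicit.
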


\begin{proof}  Since the loops $a,b$ miss the  vertices of~$G$, each  red vertex $w\in R(G)$ has a closed disk neighborhood $D_w$ in $\Phi \setminus (a(S^1) \cup b(S^1))$.   We can assume that the disks $\{D_w\}_w$ are disjoint and  $D_w\cap G\subset \Phi$ is a star with center~$w$ for all $w$. 
Then $$\Gamma=\Phi \setminus \cup_{w\in R(G)} \Int (D_w)$$
is a compact connected subsurface of~$\Phi$  which we provide with orientation induced by that of~$\Phi$. For every blue vertex  $v\in B(G)$, the set
$$s_v=\Gamma \cap \cup_{e \in {\text {Edg}}_v} e$$
is a star in~$\Gamma$ with center~$v$. It is clear that the stars $\{ s_v \, \vert \, v\in R(G)\}$  are pairwise disjoint. The assumption that~$G$ is a filling graph of~$\Phi$ implies that this family of stars  is a star filling of~$\Gamma$. Since $a(S^1) \cup b(S^1) \subset \Gamma$, the loops $a,b$ represent certain homology classes $x', y' \in H_1(\Gamma)$. Theorems~\ref{s-inter} and~\ref{s-inter2} imply that
  \begin{equation}\label{homolclosedee} 2\, x' \cdot_\Gamma y' = \sum_{e \in {\text {Edg}}(G)} \big (( a\cdot e) (b\cdot e^+) - (b\cdot e) ( a\cdot e^+) \big ). \end{equation} 
On the other hand,  the inclusion homomorphism $H_1(\Gamma)\to H_1(\Phi)$ carries $x', y'$ respectively to $x,y$. The usual definition of the intersection forms $\cdot_\Gamma$ and $\cdot_\Phi$ implies that $x' \cdot_\Gamma y'= x \cdot_\Phi y$. Combining this   with~\eqref{homolclosedee}, we obtain~\eqref{homolclosed}. 
  \end{proof}

To state similar results for the bracket $ [-,-]_\Phi$  in the module~$M$, we need more notation. For a loop~$a$ in~$\Phi$, we let $\langle a \rangle\in  M$ be the free homotopy class of~$a$.
For a  point $p \in \Phi$ traversed by~$a$  only once, we let $a_p$ be the loop in~$\Phi$ starting  at~$p$ and going along~$a$ until the return to~$p$. We say that a pair of loops $a,b$ in~$\Phi$ is \emph{$G$-generic} if $a,b$ are $G$-generic and do not meet at   points of~$G$. 
Consider  a \emph{$G$-generic} pair of loops $a,b$ and consider  edges $e,e'$ of~$G$ sharing the same   blue vertex~$v$. For any points  $p\in a\cap e, q\in b\cap e'$, we  let $c=c_{p,q}$ be the path going  from~$p$ to~$v$ along~$e$ and then going from~$v$  to~$q$ along~$e'$.  We   write $a_pb_q$ for the loop $a_p c b_qc^{-1}$.

   \begin{theor}\label{s-inter++closed}   For any free homotopy classes $x,y$ of loops in~$\Phi$ and any $G$-generic pair of loops $a,b$ in~$\Phi$ representing $x,y$,  we have
$$
2\,   [x ,y]_\Phi $$
$$=  \sum_{e \in {\text {Edg}}(G)} \big ( \sum_{p\in a\cap e, q\in b\cap e^+} \mu_p(a)\,  \mu_q(b)  \langle  a_p b_q \rangle  -  \sum_{ p\in a\cap e^+, q\in b\cap e}
   \mu_p(a) \, \mu_q(b)  \langle      a_p b_q \rangle\big ) .  $$
\end{theor}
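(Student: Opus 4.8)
The plan is to repeat the proof of Theorem~\ref{rigoq1a} verbatim at the level of the geometric setup, replacing the homological intersection form by Goldman's bracket. First I would reproduce that setup: choose for each red vertex $w \in R(G)$ a closed disk neighborhood $D_w$ of $w$ contained in $\Phi \setminus (a(S^1) \cup b(S^1))$, with the $\{D_w\}_w$ pairwise disjoint and each $D_w \cap G$ a star centered at $w$; this is possible because the $G$-generic loops $a, b$ avoid the vertices of $G$. Setting $\Gamma = \Phi \setminus \cup_{w \in R(G)} \Int(D_w)$, I obtain a compact connected oriented subsurface with non-empty boundary that contains both loops, and the stars $s_v = \Gamma \cap \cup_{e \in \text{Edg}_v} e$ indexed by the blue vertices $v \in B(G)$ form a star-filling $F$ of $\Gamma$.

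Working inside $\Gamma$, I would observe that because $a$ and $b$ avoid the disks $D_w$, for every edge $e$ the intersection set $a \cap e$ and the signs $\mu_p(a)$ are unaffected by truncating $e$ to its part in $\Gamma$, while the cyclic order at each blue vertex and the successor operation $e \mapsto e^+$ agree in $\Gamma$ and in $\Phi$. Identifying $\text{Edg}(s_v)$ with $\text{Edg}_v$ and using $\text{Edg}(G) = \sqcup_{v \in B(G)} \text{Edg}_v$, I apply Theorem~\ref{s-inter++} to the star-filling $F$ to get
\begin{align*}
2[\langle a\rangle, \langle b\rangle]_\Gamma
&= \sum_{v \in B(G)} [\langle a\rangle, \langle b\rangle]_{s_v} \\
&= \sum_{e \in \text{Edg}(G)} \Big( \sum_{p\in a\cap e,\, q\in b\cap e^+} \mu_p(a)\,\mu_q(b)\, \langle a_p b_q\rangle - \sum_{p\in a\cap e^+,\, q\in b\cap e} \mu_p(a)\,\mu_q(b)\, \langle a_p b_q\rangle\Big),
\end{align*}
where each grafted class $\langle a_p b_q\rangle$ is formed using the unique path in the tree $s_v$ joining $p$ to $q$, which runs from $p$ to the center $v$ and then from $v$ to $q$ along the two edges --- exactly the path $c_{p,q}$ prescribed in the statement.

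Finally I would push this identity forward along the inclusion $\iota \colon \Gamma \hookrightarrow \Phi$. The induced homomorphism $\iota_* \colon M(\Gamma) \to M(\Phi)$ sends the classes $x', y'$ of $a, b$ in $\Gamma$ to $x, y$, and since $a$ and $b$ lie entirely in $\Gamma$ their intersection points, intersection signs, and grafted loops $a_p b_p$ coincide whether computed in $\Gamma$ or in $\Phi$; hence $\iota_*[\langle a\rangle, \langle b\rangle]_\Gamma = [x, y]_\Phi$. Applying $\iota_*$ to the displayed formula, and noting that $\iota_*$ carries each $\langle a_p b_q\rangle$ to the class of the same grafted loop in $\Phi$, yields the desired equality.

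I expect the only real work to lie in this transfer step: one must check that the inclusion $\Gamma \hookrightarrow \Phi$ intertwines the two Goldman brackets on the chosen representatives, which amounts to the observation that passing from $\Gamma$ to $\Phi$ neither creates nor destroys intersections of $a$ and $b$ and leaves every grafted loop unchanged, together with the verification that the grafting paths supplied by the stars $s_v$ coincide with the paths $c_{p,q}$ through the blue vertices. These checks are elementary, but they are precisely the points at which the closed-surface statement goes beyond a purely formal application of the results for surfaces with boundary.
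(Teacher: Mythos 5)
Your proposal is correct and follows exactly the route the paper takes: the paper deduces Theorem~\ref{s-inter++closed} from Theorem~\ref{s-inter++} by the same construction as in the proof of Theorem~\ref{rigoq1a} (cutting out disks around the red vertices to obtain a compact bordered subsurface~$\Gamma$ with the star-filling $\{s_v\}_{v\in B(G)}$), and your transfer of the identity along the inclusion $\Gamma\hookrightarrow\Phi$ is precisely the naturality of Goldman's bracket that this deduction implicitly uses.
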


This theorem is deduced from Theorem~\ref{s-inter++}  using the   construction  introduced in     the proof of Theorem~\ref{rigoq1a}. 
  
  We finally   compute the cobracket  $ \nu_{\Phi}: M  \to M  \otimes M$. For a loop~$a$ in~$\Phi$, we set $\langle a \rangle_0=\langle a \rangle\in  M$   if~$a$ is non-contractible and $\langle a \rangle_0=0 \in  M$ if~$a$ is contractible. 
Consider a $G$-generic loop~$a$ in~$\Phi$  and edges $e,e'$ of~$G$ sharing the same   blue vertex~$v$. For   points  $p\in a\cap e, p'\in a\cap e'$, we     write $a_{p,p'}$ for the loop going from~$p$ to~$p'$  along~$a$, then going to~$v$ along~$e'$,  then going back to~$p$
along~$e$.

  \begin{theor}\label{s-inter++cobrclosed}  For any  free homotopy class~$x $ of loops in~$\Phi$ and any $G$-generic   loop~$a $ in~$\Phi$ representing~$x$,  we have   $$ 2\,  \nu_\Phi (x) $$
  $$= \sum_{e \in {\text {Edg}}(G)} \,  \sum_{p\in a\cap e, p'\in a\cap e^+}
  \mu_p(a)\,  \mu_{p'} (a)  \, \big (  \langle a_{p,p'} \rangle_0 \otimes \langle a_{p',p} \rangle_0  - \langle a_{p',p} \rangle_0  \otimes \langle a_{p,p'} \rangle_0 \big )   
.$$ 
\end{theor}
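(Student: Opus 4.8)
The plan is to mimic the construction from the proof of Theorem~\ref{rigoq1a}, reduce the computation to the bounded surface case governed by Theorem~\ref{s-inter++cobr} and Lemma~\ref{rigovbvbq1++++abbbn}, and then transport the resulting identity from the subsurface $\Gamma$ to $\Phi$. First I would choose, for each red vertex $w \in R(G)$, a closed disk neighborhood $D_w \subset \Phi \setminus a(S^1)$ so that the $D_w$ are pairwise disjoint and each $D_w \cap G$ is a star centered at~$w$; set $\Gamma = \Phi \setminus \bigcup_{w} \Int(D_w)$, oriented as a subsurface of~$\Phi$, and take $a$ to be in addition generic (double transversal self-intersections), which costs only a small deformation. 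As in the proof of Theorem~\ref{rigoq1a}, the sets $s_v = \Gamma \cap \bigcup_{e \in \text{Edg}_v} e$ for $v \in B(G)$ form a star-filling of the compact surface~$\Gamma$, and $a$ is an $s_v$-generic loop contained in~$\Gamma$. Applying Theorem~\ref{s-inter++cobr} to~$\Gamma$ and expanding each $\nu_{s_v}$ by Lemma~\ref{rigovbvbq1++++abbbn}, I obtain
\[
\sum_{e \in \text{Edg}(G)} \sum_{p \in a \cap e,\, p' \in a \cap e^+} \mu_p(a)\,\mu_{p'}(a)\,\big(\langle a_{p,p'}\rangle_0 \otimes \langle a_{p',p}\rangle_0 - \langle a_{p',p}\rangle_0 \otimes \langle a_{p,p'}\rangle_0\big) = 2\,\nu_\Gamma(\langle a\rangle)
\]
\emph{in} $M(\Gamma) \otimes M(\Gamma)$, where each $\langle\cdot\rangle_0$ records contractibility in~$\Gamma$. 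Here the edges of the stars $s_v$ are exactly the edges of~$G$ (each edge belonging to the star at its blue endpoint), and the loops $a_{p,p'}$ coincide with those in the statement because the return path along $s_v$ through~$v$ is the return path along~$G$ through~$v$.

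The main obstacle is that this identity lives over~$\Gamma$, whereas the target identity lives over~$\Phi$, and the cobracket is \emph{not} natural under the inclusion $\iota\colon \Gamma \hookrightarrow \Phi$. The defect is caused entirely by the operator $\langle\cdot\rangle_0$: a subloop $a^i_r$ or a loop $a_{p,p'}$ can be essential in~$\Gamma$ yet null-homotopic in~$\Phi$ (it bounds a disk in~$\Phi$ that swallows some removed red vertices), so that $\iota_*\langle\cdot\rangle_0$ computed in~$\Gamma$ disagrees with $\langle\cdot\rangle_0$ computed in~$\Phi$. Thus one cannot simply push the displayed identity forward by $\iota_*^{\otimes 2}$.

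To overcome this I would introduce the linear projection $P\colon M(\Phi) \to M(\Phi)$ that annihilates the class $1 \in \mathcal{L}(\Phi)$ of a contractible loop and fixes every other basis element, and then verify the key compatibility
\[
\langle c\rangle_0^{\Phi} = P\big(\iota_*\langle c\rangle_0^{\Gamma}\big)
\]
for every loop~$c$ in~$\Gamma$; this is checked by the three cases according to whether $c$ is contractible in~$\Gamma$, essential in~$\Gamma$ but contractible in~$\Phi$, or essential in~$\Phi$ (in the middle case $\iota_*\langle c\rangle_0^\Gamma = 1$, which $P$ kills). Because the self-intersection set $\# a$, the subloops $a^i_r$, the edges~$e$, the signs $\mu_p(a)$, and the loops $a_{p,p'}$ are literally the same whether $a$ is read in~$\Gamma$ or in~$\Phi$, this compatibility shows that both sides of the desired formula are images under $(P \otimes P)\circ \iota_*^{\otimes 2}$ of their $\Gamma$-counterparts: namely $\nu_\Phi(x) = (P\otimes P)\,\iota_*^{\otimes 2}\,\nu_\Gamma(\langle a\rangle)$, and the $\Phi$-edge-sum in the statement equals $(P\otimes P)\,\iota_*^{\otimes 2}$ applied to the left-hand side of the displayed identity.

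Finally I would apply the operator $(P\otimes P)\circ \iota_*^{\otimes 2}$ to the whole displayed identity over~$\Gamma$: its left-hand side becomes the edge-sum of the statement (now with $\langle\cdot\rangle_0$ read in~$\Phi$), its right-hand side becomes $2\,\nu_\Phi(x)$, and the asserted equality follows at once since $(P\otimes P)\iota_*^{\otimes 2}$ is linear and hence preserves the factor~$2$ and the tensor structure. I expect the only genuinely delicate point to be the verification of the compatibility $\langle c\rangle_0^{\Phi} = P(\iota_*\langle c\rangle_0^{\Gamma})$, together with the bookkeeping that the geometric data defining the two edge-sums coincide; everything else is then automatic.
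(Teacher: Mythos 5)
Your proposal is correct and follows exactly the route the paper indicates for this theorem (which the paper disposes of in one sentence by pointing to the construction in the proof of Theorem~\ref{rigoq1a} and to Theorem~\ref{s-inter++cobr} together with Lemma~\ref{rigovbvbq1++++abbbn}). The one substantive point you add --- that $\langle\cdot\rangle_0$ is not natural under the inclusion $\Gamma\hookrightarrow\Phi$ because a loop encircling a removed red disk is essential in $\Gamma$ but contractible in $\Phi$, and that this is repaired by composing $\iota_*^{\otimes 2}$ with the projection killing the trivial class --- is a genuine subtlety that the paper leaves implicit, and your three-case verification of $\langle c\rangle_0^{\Phi}=P(\iota_*\langle c\rangle_0^{\Gamma})$ handles it correctly.
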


This theorem is deduced from Theorem~\ref{s-inter++cobr}  using the  construction in     the proof of Theorem~\ref{rigoq1a}. 

\subsection{Example} We check  Formula~\eqref{homolclosed} in a case where all computations can be done  explicitly.   Set $I=[0,1]$. 
Identifying the opposite  sides of the square $I^2$  via $(t,0)=(t, 1)$ and $(0,t)= (1,t)$ for all $t\in I$ we obtain a 2-torus~$T$. Let  
  $p: I^2\to T$ be  the projection. The  vertices $A_1=(0,0), A_2=(1,0)$, $A_3=(1,1)$,   $A_4=(0,1)$ of $I^2$ project to the same  point of~$T$   denoted~$A$.  Let $O=(1/2, 1/2)$ be the center of $I^2$.  Consider the graph $G\subset T$ with  two  vertices $p(O), A$  and four edges $\{e_i=p(OA_i)\}_{i=1}^4$ where $OA_i$ is the straight segment connecting~$O$ and $A_i$ in $I^2$. We let $p(O) $ be the blue vertex of~$G$ and let~$A$ be the red vertex of~$G$. This turns~$G$ into a filling graph of~$T$.   Consider the homology classes $x,y \in H_1(T)$ represented respectively by  the loops $$a:S^1 \to T, e^{2\pi i t} \mapsto p( (t, 1/3)) \quad {\text{and}} \quad b:S^1 \to T, e^{2\pi i t} \mapsto p ( ( 1/4, t)).$$ These loops meet in the  point $p((1/4, 1/3))$. We orient~$T$ so that $x \cdot_T y=+1$. 
Then
$$a \cdot e_1= a \cdot e_2= -1,  \quad  a \cdot e_3= a \cdot e_4=0$$
and 
$$b \cdot e_1= b \cdot e_4= 1,  \quad  b\cdot e_2= b \cdot e_3=0. $$
 Clearly,   $(e_i)^+=e_{i+1}$ for $i=1,2,3$  and $(e_4)^+=e_{1}$. Therefore
$$\sum_{e \in {\text {Edg}}(G)} \big (( a\cdot e) (b\cdot e^+) - (b\cdot e) ( a\cdot e^+) \big )=-(b \cdot e_1)  (a \cdot e_2) - (b \cdot e_4) (a \cdot e_1)=2  = 2\, x \cdot_T y$$
which checks Formula~\eqref{homolclosed} in this case.

\egm


\begin{thebibliography}{CJKLS}

\bibitem[AKKN1]{AKKN1}
A. Alekseev,  N. Kawazumi, Y. Kuno, F. Naef,
\emph{The Goldman-Turaev Lie bialgebra in genus zero and the Kashiwara-Vergne problem.}
Adv. Math. 326 (2018), 1–53.


\bibitem[AKKN2]{AKKN2}
A. Alekseev,  N. Kawazumi, Y. Kuno, F. Naef,
\emph{The Goldman-Turaev Lie bialgebra and the Kashiwara-Vergne problem in higher genera.}
arXiv:1804.09566.

 

 

 
 
 

\bibitem[Go1]{Go1}
W. M. Goldman,
\emph{ The symplectic nature of fundamental groups of surfaces.}
Adv. in Math. 54 (1984), no. 2, 200--225.

\bibitem[Go2]{Go2}
W. M. Goldman,
\emph{Invariant functions on Lie groups and Hamiltonian flows of surface group representations.}
Invent. Math. 85 (1986), no. 2, 263--302.

 
 

\bibitem[Ha1]{Hain}
R. Hain,
\emph{Hodge Theory of the Turaev Cobracket and the Kashiwara--Vergne Problem.}
arXiv:1807.09209.

 
\bibitem[Ha2]{Hain2}
R. Hain,
\emph{Johnson homomorphisms.}
  arXiv:1909.03914.

\bibitem[Ka]{Ka}  A. Kabiraj, \emph{Center of the Goldman Lie algebra.}
 Algebr. Geom. Topol. 16 (2016), no. 5, 2839–2849.

\bibitem[KK1]{KK}
N. Kawazumi, Y. Kuno,
\emph{The logarithms of Dehn twists.}
 Quantum Topol. 5 (2014), no. 3, 347-–423.

 
\bibitem[KK2]{KK2}
N. Kawazumi, Y. Kuno,
\emph{Intersection of curves on surfaces and their applications to mapping class groups.}
Ann. Inst. Fourier 65 (2015), no. 6, 2711–2762. 



\bibitem[LS]{LS} D. Li-Bland, P.  Severa,  
\emph{Moduli spaces for quilted surfaces and Poisson structures.}
Doc. Math. 20 (2015), 1071–1135. 

\bibitem[Ma]{Ma} G. Massuyeau, \emph{Formal descriptions of Turaev's loop operations.}  Quantum Topol. 9 (2018), no. 1, 39–117.
 
 
\bibitem[MT]{MT}  G. Massuyeau, V. Turaev,  
\emph{Quasi-Poisson structures on representation spaces of surfaces.}
Int. Math. Res. Not. (2014), no. 1, 1–64.



\bibitem[Tu1]{Tu1}
V.  Turaev,
\emph{Skein quantization of Poisson algebras of loops on surfaces.}
Ann. Sci. \'Ecole Norm. Sup. (4) 24 (1991), no. 6, 635--704.

\bibitem[Tu2]{Tu2}
V.  Turaev,
\emph{Topological constructions of tensor fields on moduli spaces.} arXiv:1901.02634.
 




                     \end{thebibliography}
    \end{document}